\DeclareSymbolFont{cyrletters}{OT2}{wncyr}{m}{n}
\DeclareMathSymbol{\Sha}{\mathalpha}{cyrletters}{"58}
\newcommand{\BA}{{\mathbb {A}}}
\newcommand{\BC}{{\mathbb {C}}}
\newcommand{\BF}{{\mathbb {F}}}
\newcommand{\BG}{{\mathbb {G}}}
\newcommand{\BN}{{\mathbb {N}}}
\newcommand{\BP}{{\mathbb {P}}}
\newcommand{\BQ}{{\mathbb {Q}}}
\newcommand{\BR}{{\mathbb {R}}}
\newcommand{\BX}{{\mathbb {X}}}
\newcommand{\BY}{{\mathbb {Y}}}
\newcommand{\BZ}{{\mathbb {Z}}}
\newcommand{\CA}{{\mathcal {A}}}
\newcommand{\CB}{{\mathcal {B}}}
\newcommand{\CC}{{\mathcal {C}}}
\newcommand{\CD}{{\mathcal {D}}}
\newcommand{\CE}{{\mathcal {E}}}
\newcommand{\CF}{{\mathcal {F}}}
\newcommand{\CG}{{\mathcal {G}}}
\newcommand{\CH}{{\mathcal {H}}}
\newcommand{\CI}{{\mathcal {I}}}
\newcommand{\CJ}{{\mathcal {J}}}
\newcommand{\CK}{{\mathcal {K}}}
\newcommand{\CL}{{\mathcal {L}}}
\newcommand{\CM}{{\mathcal {M}}}
\newcommand{\CN}{{\mathcal {N}}}
\newcommand{\CO}{{\mathcal {O}}}
\newcommand{\CP}{{\mathcal {P}}}
\newcommand{\CQ}{{\mathcal {Q}}}
\newcommand{\CR}{{\mathcal {R}}}
\newcommand{\CT}{{\mathcal {T}}}
\newcommand{\CV}{{\mathcal {V}}}
\newcommand{\CW}{{\mathcal {W}}}
\newcommand{\CX}{{\mathcal {X}}}
\newcommand{\CY}{{\mathcal {Y}}}
\newcommand{\CZ}{{\mathcal {Z}}}
\newcommand{\RA}{{\mathbf {A}}}
\newcommand{\FB}{\mathfrak{B}}
\newcommand{\Hom}{\operatorname{Hom}}
\newcommand{\Pic}{\operatorname{Pic}}
\newcommand{\XX}{\underline{\mathbf{X}}}
\newcommand{\xx}{\underline{\mathbf{x}}}
\newcommand{\YY}{\underline{\mathbf{Y}}}
\newcommand{\yy}{\underline{\mathbf{y}}}
\newcommand{\ZZ}{\underline{\mathbf{Z}}}
\newcommand{\zz}{\underline{\mathbf{z}}}
\renewcommand{\aa}{\underline{\mathbf{a}}}
\newcommand{\bb}{\underline{\mathbf{b}}}
\newcommand{\dd}{\underline{\mathbf{d}}}
\newcommand{\ee}{\underline{\mathbf{e}}}
\newcommand{\ff}{\underline{\mathbf{f}}}
\newcommand{\uu}{\underline{\mathbf{u}}}
\newcommand{\one}{\underline{\mathbf{1}}}
\newcommand{\CTUT}{\mathcal{T}}
\newcommand{\FTUT}{\mathfrak{T}}
\newcommand{\CTNS}{\mathbf{T}_{\operatorname{NS}}}
\newcommand{\FTNS}{\mathfrak{T}_{\operatorname{NS}}}
\DeclareMathOperator*{\dprime}{\prime \prime}
\DeclareMathOperator{\bxi}{\mathbf{\xi}}
\DeclareFontFamily{U}{russian}{}
\DeclareFontShape{U}{russian}{m}{n}
{ <5><6> wncyr5
	<7><8><9> wncyr7
	<10><10.95><12><14.4><17.28><20.74><24.88> wncyr10 }{}
\DeclareSymbolFont{Russian}{U}{russian}{m}{n}
\DeclareSymbolFontAlphabet{\mathcyr}{Russian}
\let\@math@cyr\mathcyr
\renewcommand{\mathcyr}[1]{\@math@cyr{\cyracc #1}}
\theoremstyle{plain}
\numberwithin{equation}{section}
\newtheorem{theorem}{Theorem}[section] 
\newtheorem*{mainthm}{Main Theorem}
\newtheorem{lemma}[theorem]{Lemma}
\newtheorem{proposition}[theorem]{Proposition}
\newtheorem{corollary}[theorem]{Corollary}
\newtheorem*{condGS}{Condition (GS)}
\newtheorem*{condEE}{Condition (EE)}
\newtheorem*{condEEUT}{Condition (EEUT)}
\theoremstyle{definition}
\newtheorem{definition}[theorem]{Definition}
\newtheorem{principle}[theorem]{Principle}
\newtheorem{hypothesis}[theorem]{Setting}
\newtheorem{hypothesis1}[theorem]{Hypothesis}
\theoremstyle{remark}
\newtheorem{remark}[theorem]{Remark}
\newtheorem{remarks}[theorem]{Remarks}
\let\@wraptoccontribs\wraptoccontribs\makeatother
\def\@@and{et}\makeatother
\title[Equidistribution and geometric sieve for toric varieties]
{Equidistribution of rational points and the geometric sieve for toric varieties}
\author{Zhizhong Huang}
\address{State Key Laboratory of Mathematical Sciences, Academy of Mathematics and Systems Science, Chinese Academy of Sciences, Beijing 100190, China}
\email{zhizhong.huang@yahoo.com}
\subjclass[2010]{14G05 (primary) 11D45 (secondary)}
\keywords{Equidistribution of rational points, heights, geometric sieve}
\begin{document}
			\begin{abstract}
			We prove the Manin--Peyre principle about the equidistribution of rational points on smooth proper split toric varieties. That is, rational points of bounded anticanonical height outside of the boundary divisors are equidistributed with respect to the Tamagawa measure in the adelic space. Based on a refinement of the universal torsor method due to Salberger, we provide asymptotic formulas with effective error terms for the number of rational points lying in an arbitrary adelic neighbourhood, and we also prove  an effective version of the Ekedahl-type geometric sieve. Several applications to sieving rational points  are derived. 
		\end{abstract}
						\maketitle	
								\begin{flushright}
							\textit{À Emmanuel, pour ton exigence}
						\end{flushright}					
	\tableofcontents

\section{Introduction}
\subsection{Empiricism and main theorem}
At the end of the 1980s, Manin and his collaborators (see \cite{Bat-Manin,F-M-T}) put forward conjectures about counting rational points of bounded height on projective algebraic varieties, together with geometric interpretations of the order of magnitude.  In \cite{Peyre,PeyreLille}, Peyre refines and generalises their conjectures to the following equidistribution principle of rational points in the adelic space. 
Let $V$ be an almost-Fano variety over a number field $k$ (see Setting \ref{def:almostFano}). Let $H:V(k)\to\BR_{>0}$ be the height function induced by a fixed choice of adelic norm associated to the anticanonical line bundle $K_V^{-1}$. 
Let $\omega^V$ be the Tamagawa measure on the adelic space $V(\RA_k)$ induced by the fixed adelic norm on $K_V^{-1}$ (see \eqref{eq:Tamagawameas}). For any fixed infinite subset $\CA\subset V(k)$, we define the normalised sequence of counting measures	\begin{equation}\label{eq:seqVAk}
	\left(\delta_{\CA_{\leqslant B}}:=\frac{1}{\alpha(V)\beta(V)B (\log B)^{r-1}}\sum_{P\in \CA:H(P)\leqslant B} \delta_P\right)_B
\end{equation} on the Brauer--Manin set $V(\RA_k)^{\operatorname{Br}}\subset V(\RA_k)$, where $\delta_P$ stands for the Dirac measure at $P$, $r:=\operatorname{rank}(\Pic(V))$, $\alpha(V)$ is related to the cone of pseudo-effective divisors $\overline{\operatorname{Eff}}(V)$, and $\beta(V)$ is the order of the first Galois cohomology group of the geometric Picard group $\Pic(\overline{V})$ (see \S\ref{se:heightsTamagawa}). We can now state  the equidistribution principle in the following form.
\begin{principle}[The Manin--Peyre equidistribution principle]\label{prin:Manin-Peyre}
	There exists a thin subset $M\subset V(k)$ (in the sense of Serre \cite[Definition 3.1.1]{Serre}) such that, as $B\to \infty$, the sequence of counting measures $(\delta_{(V(k)\setminus M)_{\leqslant B}})_B$ on $V(\RA_k)^{\operatorname{Br}}$
	converges weakly to the measure $\omega^V$. (In other words, for every continuous function $f$ on $V(\RA_k)^{\operatorname{Br}}$, we have $\int f \operatorname{d}\delta_{(V(k)\setminus M)_{\leqslant B}}\to \int f \operatorname{d}V(\RA_k)^{\operatorname{Br}}$ as $B\to \infty$.)
\end{principle} 
For an algebraic variety defined over a number field, the \emph{strong approximation} property (see \S\ref{se:adelicspaces}) measures the density of rational points inside of the adelic space. Principle \ref{prin:Manin-Peyre} is therefore a quantitative approach to strong approximation, as it addresses counting rational points satisfying given local conditions ordered by anticanonical height.
\begin{remarks}
	\hfill
	\begin{enumerate}
		\item In view of Principle \ref{prin:Manin-Peyre}, Manin's conjecture is the particular case where one applies the counting measures \eqref{eq:seqVAk} to the whole space $V(\RA_k)^{\operatorname{Br}}$.
			\item We would like to stress that it is usually necessary to restrict to $V(k)\setminus M$, the complement of such a thin subset $M$, otherwise the number of rational points in $M$ could dominate or have the same order of growth as those on the whole variety.
		\item It is pointed out by Peyre \cite[\S3]{Peyre} that the validity of Principle \ref{prin:Manin-Peyre} for \emph{at least one} choice of adelic norm is equivalent to its validity for \emph{any choice} of adelic norm on $K_V^{-1}$.
	\end{enumerate}
\end{remarks}

Since the removal of a codimension two closed subset does not introduce new cohomological obstruction to strong approximation (as first observed by Min\v{c}hev \cite{Minchev}), inspired by a question of Wittenberg \cite[Question 2.11]{Wittenberg} on \emph{arithmetic purity of strong approximation} (APSA)  and its recent progress on linear algebraic groups and their homogeneous spaces (see \cite{Wei,Cao-Liang-Xu,Cao-Huang1}), Principle \ref{prin:Manin-Peyre} admits the following natural extension to certain open subvarieties of any fixed almost-Fano variety, which is therefore a quantitative version of (APSA). We keep using the same notation as above.
\begin{principle}[Purity of equidistribution]\label{prin:purity}
	Suppose that $V$ satisfies Principle \ref{prin:Manin-Peyre}, upon removing a thin subset $M\subset V(k)$. Then the equidistribution principle (with the removal of $M$) also holds for any Zariski open dense subset of $V$ whose complement has codimension at least two. 
	That is, let $W\subset V$ be such an open subset. Then as $B\to\infty$, the normalised sequence of counting measures
	\begin{equation}\label{eq:seqW}
		\left(\delta_{(W(k)\setminus M)_{\leqslant B}}\right)_B
	\end{equation} on $W(\RA_k)^{\operatorname{Br}}$ converges weakly to the measure $\omega^V|_W$, where $\omega^V|_W$ is the restriction of the  Tamagawa measure $\omega^V$ to $W(\RA_k)^{\operatorname{Br}}$. 
\end{principle}

Let us now compare Principles \ref{prin:Manin-Peyre} and \ref{prin:purity} with \emph{the Hardy--Littlewood property} in setting of affine varieties first introduced in  the work of Borovoi and Rudnick \cite{Borovoi-Rudnick}, and its arithmetic purity generalisation ((APHL) for short) initiated in \cite[Question 1.1]{Cao-Huang2}. As seen from the discreteness of the embedding $k\hookrightarrow \RA_k$, in most cases, rational points on an affine variety $Y$ over $k$ can be dense only in $Y(\RA_k^S)\subset \prod_{\nu\notin S} Y(k_\nu)$, the adelic space \emph{off} a finite set of places $S$ (see \S\ref{se:adelicspaces}). Therefore, we expect that (see \cite[Definition 1.2]{Cao-Huang2}), in the case where $k=\BQ$, rational points in each connected component of $Y(\BR)$ should be equidistributed in $Y(\RA_\BQ^\infty)$ with respect to the finite part of the Tamagawa measure (together with a certain density function measuring the failure of strong approximation). So the biggest difference between (APHL) and Principle \ref{prin:purity} is that, in the latter we consider the (Brauer--Manin set of the) ``full'' adelic space $W(\RA_k)$, and we expect that rational points should be equidistributed with respect to the ``full'' Tamagawa measure.

The main goal of this article is to prove:
\begin{mainthm}
	Principles \ref{prin:Manin-Peyre} and \ref{prin:purity} hold  for any smooth proper split toric variety over $\BQ$ whose anticanonical line bundle is globally generated.
\end{mainthm}

Some further applications will be exhibited in \S\ref{se:applifibration} and \S\ref{se:primevalue}.
\subsection{Results on effective equidistribution and the geometric sieve}\label{se:results}
Let $X$ be a smooth proper split toric variety over $\BQ$ such that $K_X^{-1}$ is globally generated. We can associate a canonical toric adelic norm $(\|\cdot\|_{\operatorname{tor},\nu})_{\nu\in\mathfrak{M}(\BQ)}$ on $X$ (see \S\ref{se:toricparaheight}), which induces a toric height function $H_{\operatorname{tor}}:X(\BQ)\to\BR_{>0}$ and a toric Tamagawa measure $\omega_{\operatorname{tor}}^X=\omega_{\operatorname{tor},\infty}^X \otimes \omega_{\operatorname{tor},f}^X$ on $X(\RA_\BQ)=X(\BR)\times X(\RA_\BQ^\infty)$. Let $\CT_{O}\subset X$ be the open orbit, that is the complement of all boundary divisors.  

We now state our result on  \emph{effective equidistribution}  in terms of every ``standard'' adelic set $\CF=\CF_\infty\times\CF_{f}\subset X(\RA_\BQ)$, where $\CF_\infty\subset \mathfrak{F}$, $\mathfrak{F}$ being a specific family of $\omega_{\operatorname{tor},\infty}^X$-continuous real sets (see \S\ref{se:convergence}) which form a topological basis of $\CT_{O}(\BR)$ (see \eqref{eq:familyF}), and $\CF_{f}\subset X(\RA_\BQ^\infty)$ is non-empty open-closed\footnote{Here and after ``open-closed'' means both open and closed.}.

\begin{theorem}[Effective equidistribution]\label{thm:mainequidist}
	With the notation above, as $B\to\infty$, we have 
	\begin{multline*}
		\#\{P\in \CF\cap \CT_{O}(\BQ):H_{\operatorname{tor}}(P)\leqslant B\}\\=\alpha(X)\omega_{\operatorname{tor}}^X (\CF)B(\log B)^{r-1}+O_{\CF_\infty,\varepsilon}\left(\CL(\CF_f)^{r+2\dim X+\varepsilon}B(\log B)^{r-\frac{3}{2}+\varepsilon}\right), 
	\end{multline*}
	 for every $\varepsilon>0$, where the constant $\alpha(X)$ is defined by \eqref{eq:alphaV}, and $\CL(\CF_f)$ is the covering exponent of $\CF_{f}$ introduced in Definition \ref{def:LCEf}.
\end{theorem}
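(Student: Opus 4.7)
The plan is to adapt Salberger's universal torsor method to this equidistribution setting, carefully tracking both the shape of $\CF_\infty$ and the polynomial dependence on the conductor $\CL(\CF_f)$ of the finite-adelic condition.

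First, I would use the universal torsor $\pi\colon \CT_X \to X$ coming from the Cox ring to identify $\CT_O(\BQ)$ with $\Tns(\BZ)$-orbits of primitive integral points $\xx \in \BZ^{\Sigma(1)}$ satisfying the coprimality conditions dictated by the irrelevant ideal of the fan $\Sigma$. Under this parametrisation $H_{\operatorname{tor}}$ becomes a maximum of Laurent monomials in the $x_\rho$, the archimedean condition $P\in \CF_\infty$ cuts out a measurable region in real affine space, and the finite-adelic condition $P\in \CF_f$ becomes a union of residue classes modulo $\CL(\CF_f)$. M\"obius inversion then removes the coprimality, turning the count into a weighted sum, indexed by tuples $\dd$ of squarefree positive integers, of counts of lattice points in the region $\{H_{\operatorname{tor}}\le B\}\cap \CF_\infty$, restricted to a sublattice whose covolume is a power of $\prod_\rho d_\rho$ and to a fixed residue class modulo $\CL(\CF_f)$.

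Second, I would apply a Davenport-type lattice-point estimate on each piece, producing a main term equal to a volume times a local density, plus an error of order the Lipschitz boundary measure of the region divided by the first successive minimum of the sublattice. The polynomial factor $\CL(\CF_f)^{r+2\dim X+\varepsilon}$ emerges from the joint cost of the congruence covolume, the divisor-function bounds on the number of admissible $\dd$, and the standard $\varepsilon$-loss in divisor estimates. The logarithmic loss $(\log B)^{-1/4}\log\log B$ comes from the optimal truncation of the M\"obius sum: balancing the tail from large $\dd$ against the boundary error in the lattice-point count yields a power saving of $\tfrac{1}{4}$ in $\log B$, with the $\log\log B$ absorbing the deviation between the height sublevel set and a convex body.

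Third, I would assemble the main terms. The sum over $\dd$ of the volume contributions factors as a convergent Euler product which, by the standard toric density calculation, identifies with the finite part of $\omega_{\operatorname{tor}}^X(\CF)$; the real integration recovers $\omega_{\operatorname{tor},\infty}^X(\CF_\infty)$, and the constant $\alpha(X)$ emerges from integrating the indicator of the pseudo-effective cone $\overline{\operatorname{Eff}}(X)$ against the height condition, as in Peyre's formula.

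The main obstacle I expect is uniform control in $\CL(\CF_f)$ together with the precise power-of-log saving. Primes dividing $\CL(\CF_f)$ must be handled separately from those coprime to it, since the universal torsor parametrisation and the lattice-covolume computation behave differently there; splitting $\dd$ according to its $\gcd$ with $\CL(\CF_f)$ and treating the two resulting sub-sums by different lattice-point bounds is what should produce the stated exponent $r+2\dim X+\varepsilon$ and exactly the logarithmic factor $(\log B)^{-1/4}\log\log B$ predicted by the theorem.
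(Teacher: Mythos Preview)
Your overall framework—universal torsor parametrisation, M\"obius inversion over $\dd$, then lattice point counting—matches the paper's, and your identification of the M\"obius truncation as the source of the $(\log B)^{-1/4}$ saving (via $\alpha_0\geqslant 2$ in Salberger's Lemma on $\sum|\mu_X(\dd)|$) is essentially correct. But two points deserve correction.

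The serious gap is the appeal to a generic Davenport-type estimate. After fixing the first $r$ coordinates $(X_1,\dots,X_r)$ in a maximal cone $\sigma$, the remaining $d$ coordinates lie in a box with side-lengths $\XX^{E_\sigma(j)}$, and the real condition $\CF_\infty$ further restricts each side to a sub-interval of length $\lambda_j\XX^{E_\sigma(j)}$. When some $\XX^{E_\sigma(j)}$ is $O(1)$ the boundary error in this box is as large as its volume, and summing over such lopsided $(X_1,\dots,X_r)$ does \emph{not} obviously give anything better than $B(\log B)^{r-1}$. The paper's key technical input (its ``toric van der Corput'' Theorem~\ref{thm:CABCAAB}) is precisely that the set of integral points with $\min_j\XX^{E_\sigma(j)}<(\log B)^A$ has cardinality $O_A(B(\log B)^{r-2}\log\log B)$; this is what allows one to pass to boxes all of whose sides exceed $(\log B)^A$, after which the lattice/volume comparison is genuinely harmless, and it is also the true source of the $\log\log B$ (not a convex-body deviation). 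Your proposal does not contain a mechanism for controlling these lopsided boxes, and without it the argument would stall at exactly this point.

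Separately, your derivation of the exponent $r+2\dim X$ is not right. In the paper it has nothing to do with divisor bounds on $\dd$; it arises in two stages of residue-class covering. One first proves the estimate for a \emph{single} affine congruence class on the torsor $\CX_0$ with no $l$-dependence in the error ($\gamma_0=0$), then covers the pullback of a projective congruence class on $\CX$ by $\ll l^{\dim\CX_0+\varepsilon}=l^{r+\dim X+\varepsilon}$ such affine classes, and finally covers an arbitrary $\CF_f$ of level $\CL$ by $\ll \CL^{\dim X+\varepsilon}$ projective congruence classes, giving $(r+\dim X)+\dim X=r+2\dim X$.
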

Theorem \ref{thm:mainequidist} confirms Principle \ref{prin:Manin-Peyre} for such toric varieties, since the family of all such standard adelic sets form a topological basis of $X(\RA_\BQ)\cap \CT_{O}$ (cf. \cite[Proposition 3.3]{Peyre}). As another key feature, it also provides an explicit uniform polynomial dependency on $\CL(\CF_{f})$ for the adelic neighbourhood $\CF$ in the error term.  
(See Remark \ref{rmk:effectiveCFinfty} for discussions about the explicit dependency on $\CF_\infty$.)

Our next result is concerned with the \emph{geometric sieve}, which shows that the number of rational points which specialise into a Zariski closed subset of codimension at least two modulo a certain large prime are negligible. Let $\CX$ be the canonical smooth proper integral model  of $X$ over $\BZ$, that is, $\CX$ is the toric scheme over $\BZ$ of the structural fan. We write $\widetilde{P}\in\CX(\BZ)$ the unique lift for every $P\in X(\BQ)$.
\begin{theorem}[Geometric sieve]\label{thm:maingeomsieve}
	Let $Z\subset X$ be Zariski closed of codimension at least two.
	Let $\CZ\subset \CX$ be the Zariski closure of $Z$ in $\CX$. Then uniformly for every $N>1$,
	\begin{multline*}
		\#\{P\in \CT_{O}(\BQ):H_{\operatorname{tor}}(P)\leqslant B, \text{there exists } p\geqslant N,\widetilde{P}~\operatorname{mod}~ p \in\CZ(\BF_p)\}\\\ll_Z \frac{B(\log B)^{r-1}}{N\log N}+B(\log B)^{r-2}\log\log B.
	\end{multline*}
\end{theorem}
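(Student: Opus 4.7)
The plan is to combine the universal torsor parametrization developed earlier in this paper (following Salberger) with an Ekedahl-style geometric sieve carried out on the ambient affine space $\BA^n$. Under Salberger's parametrization, the rational points of $\CT_{O}(\BQ)$ of bounded toric height are in bijection, modulo the action of the Néron--Severi torus, with coprime integer tuples $\xx = (x_1,\dots,x_n) \in \BZ^n$ lying on the universal torsor $\CT \subset \BA^n_{\BZ}$, where $n$ is the number of rays of the fan of $X$; the height bound $H_{\operatorname{tor}}(P) \leqslant B$ translates into polyhedral bounds $|x_i| \ll B^{c_i}$. Because the torsor map $\CT \to \CX$ is smooth of relative dimension $r$, the preimage $\CZ' \subset \BA^n_{\BZ}$ of $\CZ$ is Zariski closed of codimension at least two in $\BA^n$, and the sieve condition $\widetilde{P} \bmod p \in \CZ(\BF_p)$ is equivalent to $\xx \bmod p \in \CZ'(\BF_p)$ for a representative $\xx$.

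Using $\codim \CZ' \geqslant 2$, I pick $f,g$ in the ideal of $\CZ'$ whose common vanishing locus in $\BA^n$ is still of codimension at least two. In the counting range one has $|f(\xx)|,|g(\xx)| \ll B^{C}$ for some $C=C(Z)>0$, which suggests splitting the primes at the threshold $B^{C}$. For \emph{medium primes} $N \leqslant p \leqslant B^{C}$, I apply the Lang--Weil bound $|\CZ'(\BF_p)| \ll_Z p^{n-2}$ combined with a lattice-point count in each residue class modulo $p$, the coprimality being handled by the same Möbius inversion that appears in the main count. This contributes $B(\log B)^{r-1}/p^{2}$ per prime, and the Mertens-type estimate $\sum_{p \geqslant N} p^{-2} \ll 1/(N \log N)$ yields the first error term. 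For \emph{large primes} $p > B^{C}$, the divisibilities $p \mid f(\xx)$ and $p \mid g(\xx)$ force $f(\xx) = g(\xx) = 0$, so $\xx$ lies on a closed codimension-two subscheme of $\BA^n$; its image in $X$ lies on a Zariski closed subvariety of codimension at least two. A Manin-type upper bound of order $B(\log B)^{r-2}$ for rational points of bounded height on such a subvariety, derived by a secondary application of the torsor method on the subvariety itself, together with the intermediate dyadic range in which only one of $f$ or $g$ vanishes (the other contributing a prime divisor $p \leqslant B^{C}$ of $f(\xx)$ or $g(\xx)$), produces the second term $B(\log B)^{r-2} \log \log B$; the $\log \log B$ arises from the partial sum $\sum_{p \leqslant B^{C}} 1/p$ incurred when one double-counts over these prime witnesses.

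The main obstacle I anticipate is the uniformity of the medium-prime count in $p$, with the implicit constant depending only on $Z$. Concretely, the Möbius inversion over the coprimality conditions must be decoupled from the congruence $\xx \equiv \bar{\xx} \pmod{p}$ so that the residue-class count factors cleanly and the accumulated boundary errors remain polynomial in $p$ as in Theorem \ref{thm:mainequidist}. A secondary difficulty is establishing the Manin-type bound on the codimension-two subvariety in the large-prime range uniformly over the pairs $(f,g)$ and with the sharp logarithmic power $(\log B)^{r-2}$, which is needed to avoid overshooting the stated second term by more than a $\log \log B$ factor.
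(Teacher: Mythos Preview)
Your overall strategy of lifting to the universal torsor and reducing to a geometric sieve on $\BA^n$ for coprime $f,g\in\BZ[\BA^n_\BZ]$ is exactly what the paper does. However, the splitting at $B^C$ and your treatment of the medium-prime range has a genuine gap.

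The toric height condition on $\CX_0(\BZ)$ is \emph{not} a product of intervals $|x_i|\ll B^{c_i}$; it is $\max_{\sigma\in\triangle_{\max}}\XX^{D_0(\sigma)}\leqslant B$. After decomposing by the maximising cone $\sigma$, the last $d$ coordinates range in a box of side-lengths $\XX^{E_\sigma(j)}$, which are Laurent monomials in the first $r$ coordinates and can be as small as $1$. A naive lattice count modulo $p$ in such a box has error comparable to the main term once $p$ exceeds the smallest side-length, so your claimed clean $B(\log B)^{r-1}/p^2$ contribution per prime fails uniformly for $p$ beyond any fixed power of $\log B$. Invoking the effective equidistribution of Theorem~\ref{thm:mainequidist} does not rescue this: its error is polynomial in the level, and summing $\sum_{p\leqslant B^C}p^{\gamma}$ swamps everything. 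The obstacle you anticipate (decoupling M\"obius from the congruence) is real but secondary; the primary one is the lopsidedness of the slice-boxes.

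The paper's route around this is the toric van der Corput method (Theorem~\ref{thm:CABCAAB}): one first discards those $\XX\in\CA(B)$ for which some $\XX^{E_\sigma(j)}<(\log B)^A$, at a cost of only $B(\log B)^{r-2}\log\log B$. On the remaining set all slice-boxes are thick, and the paper then splits at $(\log B)^2$ rather than $B^C$. For $N\leqslant p\leqslant (\log B)^2$ the Lang--Weil plus lattice count now succeeds (Proposition~\ref{prop:geomsieve2}), with error $p^{d-1}B(\log B)^{r-1-A}$ controllable by choosing $A$ large. For all $p>(\log B)^2$ simultaneously, the paper applies the Browning--Heath-Brown lemma (Lemma~\ref{le:EkedahlBrHB}), which is designed for lopsided boxes and delivers the bound in one stroke (Proposition~\ref{prop:geomsieve1}); this replaces your ``force $f=g=0$'' argument and covers the entire range above $(\log B)^2$ without needing primes to exceed $B^C$. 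Your large-prime device via the codimension-two subvariety is essentially Corollary~\ref{cor:subvar}, which the paper does use---but only to dispose of the locus where the projection $\BA^n\to\BA^r$ has bad fibres---and it cannot substitute for the missing middle range $(\log B)^A\ll p\ll B^C$.
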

Theorem \ref{thm:maingeomsieve} and Principle \ref{prin:Manin-Peyre} together imply Principle \ref{prin:purity}, as we will show in \S\ref{se:appgeomsieve}. 

Result of such sort goes back to Ekedahl \cite{Ekedahl}, who essentially established the geometric sieve for projective spaces. This was later vastly generalised and applied to a variety of counting problems, notably in works \cite[\S9]{Poonen-Stoll}, \cite[\S2.6]{Bhargava-Shankar}, \cite{Poonen}, \cite[\S3]{BBL}, \cite[\S3]{Cao-Huang2}, \cite[\S7]{Browning-HB}. To the author's knowledge, such effective results on the geometric sieve for higher dimensional varieties of degree $>1$ have so far only been obtained for quadratic hypersurfaces (see \cite[Theorem 1.6]{Cao-Huang2} and \cite[Theorems 1.1--1.3]{Browning-HB}). The (APSA) property (with Brauer--Manin obstruction) for general smooth proper toric varieties over general number fields is proven by Chen \cite[Theorem 1.7]{ChenS} (see also \cite[Theorem 1.1]{Wei} and \cite[Theorem 1.3]{Cao-Liang-Xu} where it is required to ignore at least one place). Our result provides a quantitative version of (APSA) in the split case.

The distribution of rational points on toric varieties have long remained central objects of study in arithmetic geometry. Manin's conjecture for toric varieties is established by Batyrev and Tschinkel in \cite{Bat-Tsch1,Bat-Tsch2} using harmonic analysis, and in the split case it is also independently proven by Salberger \cite{Salberger} using the \emph{universal torsor method}. In de la Bretèche's work  \cite{Breteche,Breteche2}, Salberger's approach is combined with an analytic method for multivariate Dirichlet series attached to certain arithmetic functions. The works \cite{Bat-Tsch1,Bat-Tsch2,Breteche} all provide asymptotic formulas with power-saving secondary terms: $$\#\{P\in \CT_{O}(\BQ):H(P)\leqslant B\}=B\CQ(\log B)+O(B^{1-\iota}),$$ where $H$ is a suitable anticanonical height function, $\CQ$ is a polynomial with real coefficients of degree $r-1$, and $0<\iota<1$.

Although Manin's conjecture has been proven for a number of varieties, it is worth emphasising that Principle \ref{prin:Manin-Peyre} has so far only been established in very rare cases (among which are certain complete intersections with many variables and generalised flag varieties, cf. \cite{Birch}, \cite[\S3.2 p. 227]{PeyreBeyond}, \cite[\S5--\S6]{Peyre}). It was announced in the unpublished preprint \cite[Theorem 3.10.3]{Chambert-Loir-Tsch} that Principle 1.1 holds (and more generally integral points are also equidistributed) for smooth proper toric varieties over general number fields. However, an counter-example to this preprint concerning integral points is found recently by Wilsch \cite{Wilsch}. Since the preprint of our article has appeared, Santens \cite[p.7~2nd paragraph]{Santens} establishes a corrected version.
Their method, apart from being capable of handling non-split toric varietes (whereas ours depends crucially on the splitness condition), can also deal with more general line bundles. However it does not seem evident to extract an effective error term with an explicit dependence on the given adelic neighbourhood following their method.  On reworking the approach of \cite{Breteche,Breteche2}, though not straightforward, it might be possible to deduce an asymptotic formula for any given \emph{finite} adelic neighbourhood with an effective error term, while dealing with an arbitrary \emph{real} neighbourhood still appears challenging. Besides, the deduction of Theorem \ref{thm:maingeomsieve} (and Principle \ref{prin:purity}) seems to go beyond both of these approaches.
\subsection{Applications to counting rational points arising from fibrations}\label{se:applifibration}
Motivated by a question of Serre \cite{Serre-Br}, and a series of results due to Loughran \emph{et al} \cite{Loughran,BBL,Loughran-Smeets,Browning-Loughran}, we address further applications of Theorems \ref{thm:mainequidist} and \ref{thm:maingeomsieve} concerning estimating effectively rational points of bounded height lying inside the adelic image of a fibration.

Recall that  a scheme $B$ of finite type over a perfect field $k$ is \emph{pseudo-split} \cite[Definition 1.3]{Browning-Loughran} if every element of the absolute Galois group $\operatorname{Gal}(\overline{k}/k)$ fixes an irreducible component of $B_{\overline{k}}$ of multiplicity one. This property is weaker than being \emph{split} \cite[Definition 0.1]{Skorobogatov}, as the latter means that the scheme $B$ contains an open subscheme over $k$ which is geometrically integral.

As before, $X$ denotes a smooth proper split toric variety over $\BQ$ such that $K_X^{-1}$ is globally generated. Let $f:Y\to X$ be a dominant proper morphism, where $Y$ is proper, smooth and geometrically integral over $\BQ$. The general philosophy is that, whether the set $f(Y(\RA_\BQ))\subset X(\RA_\BQ)$ has Tamagawa measure zero or not, depends crucially on the splitting behaviour of the fibres over codimension one points of the fibration.
We define the counting function
\begin{equation}\label{eq:countingfibration}
	\CN_{\operatorname{loc}}(f;B):=\#\{P\in\CT_{O}(\BQ):H_{\operatorname{tor}}(P)\leqslant B,P\in f(Y(\RA_\BQ))\}.
\end{equation}
The following theorem establishes quantitative estimates for the proportion of fibres that are everywhere locally soluble.
\begin{theorem}\label{thm:ratptsfibration}
	\hfill
	\begin{enumerate}
		\item Assume that $f$ is generically finite of degree $>1$. Then $$\CN_{\operatorname{loc}}(f;B)=O(B(\log B)^{r-1-\iota_{f}}),$$ where $0<\iota_{f}<1$ is a certain numerical constant depending on $f$.
	\item Assume that $f$ has geometrically integral generic fibre.
	\begin{enumerate}
		\item Assume that there exists at least one non-pseudo-split fibre over the codimension one points of $X$.  Then $$\CN_{\operatorname{loc}}(f;B)=O\left(\frac{B(\log B)^{r-1}}{(\log\log B)^{\varDelta(f)}}\right),$$ where $\varDelta(f)>0$ depends on  $f$.
		\item Assume that the fibre of $f$ above any codimension one point of $X$ is pseudo-split, and $Y(\RA_\BQ)\neq \varnothing$. Then  $\omega_{\operatorname{tor}}^X\left(f(Y(\RA_\BQ))\right)>0$ and \begin{equation}\label{eq:everylocdensity}
			\CN_{\operatorname{loc}}(f;B)\sim \alpha(X)\omega_{\operatorname{tor}}^X\left(f(Y(\RA_\BQ))\right)B(\log B)^{r-1}.
		\end{equation} 
\end{enumerate}
	\end{enumerate}
\end{theorem}
In the situation of Theorem \ref{thm:ratptsfibration} (1), the subset $f(Y(\BQ))\subset X(\BQ)$ is a thin set  \emph{of type II} (cf. \cite[\S3.1]{Serre}).
Theorem \ref{thm:ratptsfibration} (1) gives therefore a quantitative version of the fact that thin sets have Tamagawa measure zero (cf. Theorem \ref{thm:locsolfibre} (1)), and
its  power-saving on $\log B$ is a form of the effective \emph{Hilbert Irreducibility Theorem} with coefficients in a toric variety (cf. \cite[\S3.4]{Serre}). For any thin set  \emph{of type I} (i.e. a subset contained in $Z(\BQ)$ where $Z\subset X$ is a proper Zariski closed subset), see Corollary \ref{cor:subvar} for a (slightly) better estimate. 

Theorem \ref{thm:ratptsfibration} (2) generalises various results such as \cite[Theorem 1.1]{Loughran-Smeets} and \cite[Theorem 1.3]{BBL}, where the base of the fibration is the toric variety $\BP^n$.  We refer to \cite[\S1.1]{Loughran}, \cite[p. 1450--1451]{Loughran-Smeets}, \cite[p. 5762]{Browning-Loughran} for the definition of the constant $\varDelta(f)$ in Theorem \ref{thm:ratptsfibration} (2a). Theorem \ref{thm:ratptsfibration} (2b) also confirms a conjecture of Loughran \cite[Conjecture 1.7]{Loughran}.

\begin{remark} In general the existence of a non-split fibre over codimension one points is not sufficient to guarantee that $\CN_{\operatorname{loc}}(f;B)=o(B(\log B)^{r-1})$. See \cite[Example 5.9]{Loughran-Smeets} for an example studied by Colliot-Thélène, in which all fibres are everywhere locally soluble and pseudo-split, but two of them are non-split. This example fits into the situation of Theorem \ref{thm:ratptsfibration} (2b). 
\end{remark}

\subsection{Application to counting integral points in codimension one subsets}\label{se:primevalue}
We now state our second application of the effective equidistribution, inspired by the work of Nevo--Sarnak \cite[Theorems 1.1, 1.4, 1.6]{Nevo-Sarnak}. 

Let $X$ be a smooth proper split toric variety over $\BQ$ with globally generated $K_X^{-1}$ and let $Z\subset X$ be a reduced effective divisor. Let $\mathbf{X}$ be an integral model of $X$ over $\BZ$. We write $\widehat{P}\in\mathbf{X}(\BZ)$ for the lift of $P\in X(\BQ)$. Let $\mathbf{Z}$ be the Zariski closure of $Z$ in $\mathbf{X}$ and $\mathbf{U}:=\mathbf{X}\setminus\mathbf{Z}$. For $S$ a finite set of places of $\BQ$ containing the real place, let us consider $$\CN_{(1)}(Z;B):=\#\{P\in\CT_{O}(\BQ): H_{\operatorname{tor}}(P)\leqslant B,\widehat{P}\in \mathbf{U}(\BZ_S)\},$$ which counts the number of $S$-integral points of $\mathbf{U}$. If $\mathbf{Z}$ is the zero-locus of an integral global section $s$ of $\CO_{\mathbf{X}}(\mathbf{Z})$, we consider $$\CN_{(2)}(Z;B):=\#\{P\in\CT_{O}(\BQ): H_{\operatorname{tor}}(P)\leqslant B,|s(\widehat{P})| \text{ is a prime number}\}.$$
\begin{theorem}\label{thm:primevalue}
There exists $\theta>0$ such that  $$\CN_{(i)}(Z;B)=O\left(\frac{B(\log B)^{r-1}}{(\log\log B)^{
	\theta}}\right), \quad\text{for all }i=1,2.$$ where the implied constant depends on $Z$ and $\mathbf{X}$.\end{theorem}

An elementary example to which $\CN_{(2)}(Z;B)$ applies is the following. Suppose that $X$ embeds into a certain $\BP^m$ and $Z$ is the divisor  of a hyperplane section in $\CO_{\BP^m}(1)$, for example one of the coordinate functions of $\BP^m$. So Theorem \ref{thm:primevalue} shows in particular that the density of rational points, one of whose (integral) projective coordinates takes prime values, is negligible.
\begin{remarks}
	\hfill
	\begin{enumerate}
		\item More generally, an appropriate adaptation of our method can also deal with prime values of any non-zero rational function on $X$.
		\item Establishing non-trivial lower bounds for $\CN_{(2)}(Z;B)$ is directly related to the widely open Schinzel hypothesis and thus remains a very challenging problem. See e.g. \cite[p. 362]{Nevo-Sarnak} for conjectural formulas.
	\end{enumerate}
\end{remarks}
\subsection{Methods, strategy and other results} 
\subsubsection{Universal torsor method à la Salberger} 
The main ingredient of our approach to proving Theorems \ref{thm:mainequidist} and \ref{thm:maingeomsieve} is based on \cite{Salberger}. This method has the advantage that the parametrisation of rational points is made in a totally explicit way that strongly ties to the combinatorial data of the structural fan defining the toric variety. In \cite{Huang}, it has also been used by the author to study Diophantine approximation of rational points on split toric varieties. 

 We now sketch the basic strategy to prove Theorem \ref{thm:mainequidist}. For any smooth proper split toric variety $X$, there is a unique universal torsor $X_0$ over $X$ (under the Néron--Severi torus $\CTNS$) up to isomorphism, which is itself a quasi-affine toric variety and admits a canonical integral model $\CX_0$ over $\BZ$. The toric height function on $X$ can be lifted into $X_0$, so that counting rational points in $X(\BQ)$ is equivalent to counting integral points in $\CX_0(\BZ)$.  The  procedure of point counting in $\CX_0(\BZ)$ of bounded toric height with appropriate real conditions and congruence conditions is to compared it with integrals against the real toric adelic measure on $X_0$. Then the computation boils down to certain integrals on $\CTNS$ against the global $\CTNS$-invariant differential form.

\subsubsection{Uniform effective equidistribution conditions.}
The error terms of the weak convergence of the sequence of measures in Principles \ref{prin:Manin-Peyre} and \ref{prin:purity} may depend on each test function defined over the adelic space. We prove in Proposition \ref{prop:EEimpliesgeneral} that the error terms can be made uniform, provided that this is true  for a certain family of adelic subsets called \emph{congruence neighbourhoods}, introduced in \S\ref{se:congneigh}. This motivates us to formulate, based on all known examples, an effective equidistribution condition \textbf{(EE)} in \S\ref{se:condEE}.  Similarly, when lifting into universal torsors, we also formulate a condition \textbf{(EEUT)} in \S\ref{se:EEUT}, and we prove in Proposition \ref{prop:univtorEEGS} that \textbf{(EEUT)} implies \textbf{(EE)} with a different choice of exponents. This is the way we achieve the uniform dependency in Theorem \ref{thm:mainequidist}.

\subsubsection{Equidistribution and sieving}
Principle \ref{prin:purity}, Theorems \ref{thm:ratptsfibration} and \ref{thm:primevalue} are all  concerned with obtaining asymptotic formulas for rational points satisfying ``infinitely many'' local conditions, whose corresponding sets in the adelic space are measurable but not open. The common difficulty (e.g. to operate harmonic analysis or to use the height zeta function) is the lack of group actions on such sets. Our strategy is an abstract ``local-to-global passage''-- Theorems \ref{thm:Tamagawazero} and \ref{thm:keyOmega}. Suppose that an almost-Fano variety $V$ satisfies Principle \ref{prin:Manin-Peyre}.  Roughly speaking,  we ``approximate'' these non-open sets by open sets of $V(\RA_k^{\infty_k})$, defined by a finite collection of local conditions ``truncated'' up to a reasonably large parameter, which goes to $\infty$ as $B$ grows. A condition \textbf{(GS)} is formulated which uniformly controls the number of rational points violating the local condition associated at a certain large prime. A significant feature is that both theorems do not require effective error term estimates, either in Principle \ref{prin:Manin-Peyre} or in condition \textbf{(GS)}. 

\subsubsection{A Selberg's sieve for rational points.} To obtain the upper bounds in Theorem \ref{thm:ratptsfibration} (1) \& (2a) and Theorem \ref{thm:primevalue}, as an effective version of Theorem \ref{thm:Tamagawazero}, we develop a Selberg sieve -- Theorem \ref{thm:Selbergsieve} -- capturing the  finitely many  truncated local conditions as above, and each of them can be ``detected'' modulo a uniform power of primes (indeed modulo $p$ for (1) and modulo $p^2$ for (2a)). A generalised Lang--Weil estimate (Proposition \ref{co:Lang-Weil}) is established in course of proving Theorem \ref{thm:primevalue}.

\subsubsection{Toric van der Corput method}
One key technical innovation of our article -- Theorem \ref{thm:CABCAAB} -- is showing that the contribution from integral points of bounded toric height lying inside certain lopsided boxes is negligible. We tentatively name this treatment \emph{toric van der Corput method}, since it is partly in spirit inspired by the classical van der Corput method, which estimates certain exponential sums over the integers by comparing them with integrals, and controls the error via appropriate averaging processes. In our toric setting, we achieve this by comparing with the aforementioned integrals on $\CTNS$, and evaluate them on making use of toric geometry.
The passage to the complement of such integral points is a crucial step for more demanding lattice point counting problems in spike-shaped domains such as the geometric sieve. This method and its applications mentioned above complement those in \cite[\S11]{Salberger}.

\subsection{Structure of the article}
In \S\ref{se:preliminary} we recall preliminaries for the construction of Tamagawa measures on almost-Fano varieties. 

In \S\ref{se:equidistgeomsieve} we establish various sieving results. In  \S\ref{se:local-to-global} based on Principle \ref{prin:Manin-Peyre} we establish Theorems \ref{thm:Tamagawazero} and \ref{thm:keyOmega}. In \S\ref{se:Selbersieve} based on condition \textbf{(EE)} introduced in \S\ref{se:condEE} we establish Theorem \ref{thm:Selbergsieve}.

In \S\ref{se:univtor}, we first recall the construction of Tamagawa measures on universal torsors, and prove Theorem \ref{thm:equidistunivtor}, which states that a form of equidistribution of integral points on universal torsors implies Principle \ref{prin:Manin-Peyre}. We then prove Proposition \ref{prop:univtorEEGS} as an effective version of this upon introducing condition \textbf{(EEUT)}.

In \S\ref{se:toricparaheight} we recall basic  toric geometry and the construction of toric adelic norms and toric Tamagawa measures.

In \S\ref{se:purityproof} we prove Theorem \ref{thm:effectiveEEUT}, which gives asymptotic formulas for lattice points counting on universal torsors with bounded toric height and appropriate local conditions. Based on this we deduce Theorem \ref{thm:mainequidist}.

In \S\ref{se:toricvandercorput} we prove Theorem \ref{thm:CABCAAB}, which is the central technical result about the toric van der Corput method. 

In \S\ref{se:countingsubvar} we prove upper bounds for points lying in a closed subvariety.

In \S\ref{se:geomsieve} we prove Theorem \ref{thm:maingeomsieve}, by means of establishing Theorem \ref{thm:geomsieve1}, which is the geometric sieve for universal torsors. 

In \S\ref{se:application} we gather all ingredients to prove Theorems \ref{thm:ratptsfibration} \& \ref{thm:primevalue} and confirm Principle \ref{prin:purity}.

\subsection*{Acknowledgements}
We thank Tim Browning for valuable suggestions and encouragements, and we thank Olivier Wittenberg for his constant interest. We are grateful to Yang Cao for his generosity of sharing numerous ideas since our collaborations \cite{Cao-Huang1,Cao-Huang2}. We have benefited from enlightening discussions with many colleagues, especially Régis de la Bretèche,  Daniel Loughran, Emmanuel Peyre, Efthymios Sofos and Florian Wilsch. Part of this work was reported at the `Skoro60bis' conference at Institut Henri Poincaré, and we thank the organisers for their kind invitation.   We heartfully thank the anonymous referees for their efforts that help to ameliorate substantially the quality of this article.

\subsection*{Funding} The author was supported by National Key R\&D Program of China No. 2025YFA1017300 and 2025YFA1017303. Furthermore, the financial supports from SFB 1085 ``Higher Invariants'', from Max-Planck-Institut für Mathematik in Bonn, from Rachel Newton's grant at Kings' College London, from Université Grenoble Alpes, and from the FWF grant No. P32428-N35 at Institute of Science and Technology Austria are also greatly appreciated.

\section{Glossary of terminology}\label{se:preliminary}
\subsection{Notation and conventions}
In this article, the symbol $k$ denotes a number field, with discriminant $d_k$. The ring of integers is denoted by $\CO_k$ or simply $\CO$ if $k$ is considered fixed. We write $\infty_k$ for the (finite) set of archimedean places. Let $\mathfrak{c}_k$ be the class group of $k$. For $S\subset\mathfrak{M}(k)$ any finite set containing $\infty_k$, we denote the ring of $S$-integers by $\CO_{k,S}$ or simply $\CO_{S}$.  For $\nu\in\mathfrak{M}(k)$ non-archimedean, we write $\mathfrak{p}_\nu$ for the prime ideal of $\CO_k$, $k_{\nu}$ for the local field, $\CO_{\nu}$ for the ring of $\nu$-adic integers, $\mathfrak{m}_{\nu}$ for the unique maximal ideal of $\CO_{\nu}$, $\BF_\nu$ for the residue field, and $\operatorname{ord}_{\nu}(\mathfrak{l})$ for the order of an ideal $\mathfrak{l}$ with respect to the $\nu$-adic valuation. We shall use the ordinary absolute values defined as follows. If $\nu\in\mathfrak{M}(k)$ restricts to $\nu'\in\mathfrak{M}(\BQ)$, then for $x\in k$, $$|x|_\nu:=\left|N_{k_{\nu}/\BQ_{\nu'}(x)}\right|_{\nu'}$$ where $N_{k_{\nu}/\BQ_{\nu'}}:k_{\nu}\to\BQ_{\nu'}$ is the norm map, and $|\cdot|_{\nu'}$ is the usual real or $p$-adic absolute value. 
The normalised $\nu$-adic measures $\operatorname{d}x_\nu$ on $k_\nu$ are given by
\begin{equation}\label{eq:nuadicmeasurenormal}
	\begin{cases}
		\int_{\CO_{\nu}} \operatorname{d}x_\nu =1 &\text{ if } \nu \text{  is non-archimedean};\\
		\text{the usual Lebesgue measure} &\text{ if } k_\nu=\BR;\\
		2\operatorname{d}x\operatorname{d}y &\text{ if } k_\nu=\BC \text{ with coordinates }x+iy.
	\end{cases}
\end{equation}

A \emph{nice} $k$-variety is a separated, smooth, geometrically integral scheme of finite type over $k$ of dimension $\geqslant 1$. Fix an algebraic closure $\overline{k}$ of $k$. For every nice $k$-variety $V$, let $\overline{V}:=V\times_k\overline{k}$. 
We write $V^{(1)}$ for the set of codimension one points of $V$.
Let $d=\dim V$. We write $\operatorname{d}x_1\wedge\cdots\wedge\operatorname{d}x_d$ for differential forms as local sections of $K_V=\wedge^d\operatorname{Cot} (V)$ (the determinant line bundle of the cotangent bundle) of a nice $d$-dimensional variety $V$, and we write $\frac{\partial}{\partial x_1}\wedge\cdots\wedge \frac{\partial}{\partial x_d}$ for local sections of $K_V^{-1}=\wedge^d\operatorname{Tan} (V)$ (the determinant line bundle of the tangent bundle). An \emph{integral model} of $V$ is a separated, finite type, flat, $\CO_S$-scheme $\CV$, $S\subset\mathfrak{M}(k)$ finite containing $\infty_k$, such that its generic fibre is isomorphic to $V$.  Such integral models always exist, see e.g. \cite[Proposition 4.2, Lemma 4.3]{Salberger}.

In this article we use interchangeably Vinogradov's symbol and Landau's symbol. For real-valued functions $f$ and $g$ defined over the real numbers with $g$ non-negative,  $f \ll g$ and  $f=O(g)$ both mean that there exists $C>0$ such that $|f|\leqslant Cg$. The dependency on the implied constant $C$ will be specified explicitly. The notation $f\asymp g$ means that $f\ll g$ and $g\ll f$ both hold. If $g$ is nowhere zero, $f(x)=o(g(x))$ means that $\lim_{x\to \infty}\frac{f(x)}{g(x)}=0$, and $f\sim g$ means that $\lim_{x\to \infty}\frac{f(x)}{g(x)}=1$, or equivalently, $f-g=o(g)$.

\subsection{Convergence of measures and the Portmanteau Theorem}\label{se:convergence} (cf. \cite{Billingsley})
Let $\BX$ be a metric space, with the natural Borel $\sigma$-algebra.
Given $\omega$ any measure on $\BX$, a Borel measurable subset $\CB$ is called $\omega$-\emph{continuous} if $\omega(\partial \CB)=0$. 
We say that a sequence of probability measures $(\omega_B)_B$ \emph{converges  weakly} to a probability measure $\omega$  on $\BX$ if for every bounded continuous function $f: \BX\to\BR$, we have $\int_{\BX}f\operatorname{d} \omega_B\to \int_{\BX}f\operatorname{d}\omega$ as $B\to\infty$.
We shall frequently make use of the following criterion for the weak convergence. 
\begin{theorem}[Portmanteau Theorem, cf. \cite{Billingsley} Theorem 2.1]\label{thm:portmanteau}
	Let $(\omega_B)_B$ and $\omega$ be probability measures on $\BX$. Then  $(\omega_B)_B$ converges  weakly to $\omega$ if and only if for every Borel measurable  and $\omega$-continuous set $\CB\subset\BX$, we have
	$$\lim_{B\to\infty}\omega_B(\CB)=\omega(\CB).$$
\end{theorem}
If $g:\BX\to\BY$ is a measurable function between measure spaces, we write $f_*\omega$ for the \emph{pushforward} measure  of $\omega$ to $\BY$, defined as $$f_*\omega(\CC):=\omega(f^{-1}(\CC))$$ for every measurable set $\CC\subset \BY$.
\subsection{Adelic norms, adelic measures and heights}\label{se:adelicmetric}
(See \cite[\S4]{Salberger}, \cite[\S2, \S3]{PeyreBeyond}.)

Let $Y$ be a nice $k$-variety, and let $L\to Y$ be a line bundle on $Y$. 

Recall that, for every $\nu\in\mathfrak{M}(k)$, a \emph{$\nu$-adic metric} on $L$ is a map which associates to each point $P_\nu:\operatorname{Spec}(k_\nu)\to Y$ a $\nu$-adic norm $\|\cdot\|_\nu$ on the $k_\nu$-vector space $L_{P_\nu}:=P_\nu^*L$, such that for every section $s_\nu$ of $L$ defined on a $\nu$-adic open subset $U_\nu\subset Y(k_\nu)$, the map $U_\nu\to\BR_{\geqslant 0}$ given by $x\mapsto \|s_\nu(x)\|_\nu$ is continuous.

An \emph{adelic norm} on $L$ is a family $(\|\cdot\|_\nu)_{\nu\in\mathfrak{M}(k)}$ of metric satisfying the following property. There exist a finite set of places $S$ containing $\infty_k$, an integral model $\CL\to \CY$ of $L\to Y$ over $\CO_S$, such that  for every $\nu\in\mathfrak{M}(k)\setminus S$ and for every $\CO_\nu$-point $\mathbf{P}_\nu: \operatorname{Spec}(\CO_\nu)\to \CY$ with generic fibre $P_\nu\in Y(k_\nu)$,  $$\mathbf{P}_\nu^*\CL=\{x\in L_{P_\nu}:\|x\|_{\nu}\leqslant 1\}.$$
We sometimes call $(\|\cdot\|_\nu)_{\nu\notin S}$ the \emph{model norm} attached to the $\CO_S$-model $\CL\to \CY$.

Associated to a fixed adelic norm $(\|\cdot\|_\nu)_{\nu\in\mathfrak{M}(k)}$ on $L$, the \emph{(Arakelov) height function} $H_L: Y(k)\to\BR_{>0}$ is defined for $P\in Y(k)$ by \begin{equation}\label{eq:heightmetric}
	H_{L}(P):=\prod_{\nu\in\mathfrak{M}(k)}\|s(P)\|^{-1}_\nu,
\end{equation} for any fixed local section $s$ of $L$ defined in a neighbourhood of $P$ with $s(P)\neq 0$. By the product formula, this definition is independent of the choice of $s$.

We equip the anticanonical line bundle $K_Y^{-1}$ with an adelic norm $(\|\cdot\|_\nu)_{\nu\in\mathfrak{M}(k)}$ and write $$d:=\dim Y.$$ For every $\nu\in\mathfrak{M}(k)$, let $\phi_\nu:U_\nu\to k_\nu^{d}$  be a local chart which trivialises $K_Y^{-1}$ with local coordinates $(x_1,\cdots,x_d)$ and a local section $\frac{\partial}{\partial x_1}\wedge\cdots\wedge \frac{\partial}{\partial x_{d}}$, where $U_\nu\subset Y(k_\nu)$ is a $\nu$-adic open subset. Let $\operatorname{d}x_{1}\cdots\operatorname{d}x_{d}$  be the standard $\nu$-adic measure on $k_\nu^{d}$ (normalised as in \eqref{eq:nuadicmeasurenormal}). Let $\omega^Y_\nu$ be the measure on $Y(k_\nu)$ determined by the positive linear functional locally defined by $$f\mapsto\int_{U_\nu} f\circ\phi_\nu^{-1}(x_1,\cdots,x_d)\left\|\frac{\partial}{\partial x_1}\wedge\cdots\wedge \frac{\partial}{\partial x_d}\right\|_\nu\operatorname{d}x_{1}\cdots\operatorname{d}x_{d},$$
for every $f$ compactly supported in $U_\nu$, according to the Riesz representation theorem. 
We call $(\omega_\nu^Y)_{\nu\in\mathfrak{M}(k)}$ the family of \emph{adelic measures} associated to the adelic norm $(\|\cdot\|_\nu)_{\nu\in\mathfrak{M}(k)}$ on $K_Y^{-1}$ (cf. \cite[\S3.2 Construction 3.6]{PeyreBeyond}).

Let $\CY$ be an $\CO_k$-model of $Y$. For every $\nu\notin \infty_k$ and for every $m\geqslant 1$, we write 
\begin{equation}\label{eq:redmodpk}
	\operatorname{Mod}_{\nu,m}:\CY(\CO_\nu)\longrightarrow \CY(\CO_\nu/\mathfrak{m}_{\nu}^m) 
\end{equation}
for the reduction modulo $\mathfrak{m}_{\nu}^m$ map. 
For every non-empty subset $\CI\subset\CY(\CO_\nu/\mathfrak{m}_{\nu}^m) $, $\operatorname{Mod}_{\nu,m}^{-1}(\CI)$ is a non-empty open-closed compact subset of $\CY(\CO_\nu)\subset Y(k_\nu)$.  
The following simple observation will be useful.
\begin{lemma}\label{le:preparation}
	Let $Z\subset Y$  be a proper  Zariski closed subset, and let $W$ be the open subset $Y\setminus Z$. Let $\CZ$ be the Zariski closure of $Z$  in $\CY$, and $\CW:=\CY\setminus\CZ$. Then for every $\nu\notin \infty_k$ with $\CW(\CO_\nu)\neq\varnothing$ and for every $m\geqslant 1$, we have $\operatorname{Mod}_{\nu,m}^{-1}(\CW(\CO_\nu/\mathfrak{m}_\nu^m))=\CW(\CO_\nu)$.
\end{lemma}
\begin{proof}
	First of all it is clear that $\CW(\CO_\nu)\subset \operatorname{Mod}_{\nu,m}^{-1}(\CW(\CO_\nu/\mathfrak{m}_\nu^m))$. For every point $\overline{P}_m\in \CW(\CO_\nu/\mathfrak{m}_\nu^m)$, we write $\overline{P}$ for its image in $\CW(\BF_\nu)$. Assuming $\operatorname{Mod}_{\nu,m}^{-1}(\overline{P}_m)\neq\varnothing$, for any $\widetilde{P}\in \operatorname{Mod}_{\nu,m}^{-1}(\overline{P}_m)\subset \CY(\CO_\nu)$, since the open subset $\CW\cap \widetilde{P}$ of $\widetilde{P}$ contains the closed point $\overline{P}$, it is non-empty and hence must also contain the generic point of $\widetilde{P}$. This shows that $\widetilde{P}\in \CW(\CO_\nu)$, as desired.
\end{proof}

If the family $(\|\cdot\|_\nu)_{\nu\notin S}$ is the model norm attached to an $\CO_S$-model $K_\CY^{-1}\to \CY$ for a finite set of places $S$ containing $\infty_k$, we sometimes call $(\omega^Y_\nu)_{\nu\not\in S}$ the \emph{model measures} attached to $K_\CY^{-1}\to \CY$. 
It can be computed as follows.
\begin{lemma}[cf. e.g. \cite{Oesterle}  \S I.2.2, \cite{Salberger} Theorems 2.13 \& 2.14, Corollary 2.15]\label{le:modelmeasurecomp}
	There exists an integer $m_0\in\BN$ such that for any $m\geqslant m_0$, any $\nu\notin S$, and any $\CI\subset \CY(\CO_\nu/\mathfrak{m}_{\nu}^m)$, we have \begin{equation*}
		\omega^Y_\nu(\operatorname{Mod}_{\nu,m}^{-1}(\CI))=\frac{\#\CI}{\left(\#(\CO_\nu/\mathfrak{m}_{\nu}^m)\right)^d}.
	\end{equation*}
	If $\CY$ is smooth over $\CO_\nu$, then the equality above holds for any $m\geqslant 1$.
\end{lemma}
\subsection{Adelic spaces}\label{se:adelicspaces}
(See \cite[\S I.3]{Oesterle}, \cite[\S2.7]{Wittenberg}.)

Let $Y$ be a nice $k$-variety, and let $\CY$ be an integral model of $Y$ over $\CO_k$. For  $S\subset \mathfrak{M}(k)$ any finite set of places containing $\infty_k$,
let	$$\CY(\widehat{\CO}_S):=\prod_{\nu\not\in S}\CY(\CO_\nu),\quad Y(k_S):=\prod_{\nu\in S}Y(k_\nu).$$ When $S=\infty_k$ we write $\CY(\widehat{\CO}_k)$  instead of $\CY(\widehat{\CO}_{\infty_k})$. Equipped with the product topology, $\CY(\widehat{\CO}_S)$ is compact.	The \emph{adelic space} (after Weil and Grothendieck) of $Y$ is
		\begin{equation}\label{eq:adelization}
			Y(\RA_k):=\varinjlim_{S\subset \mathfrak{M}(k)}Y(k_S)\times\CY(\widehat{\CO}_S),
		\end{equation} where the direct limit is taken for all such finite subsets $S$ containing $\infty_k$. We equip $Y(\RA_k)$ with the topology induced by the direct limit. For any finite set $S'\subset\mathfrak{M}(k)$, let $Y(\RA_k^{S'})$ denote the \emph{adelic space off} $S'$,  which is the image of $Y(\RA_k)$ under the projection without the factors in $S'$.
	These are locally compact Haudorff topological spaces with countable base.
			If $Y$ is proper, then $$Y(\RA_k)=\prod_{\nu\in\mathfrak{M}(k)} Y(k_\nu),$$ and it is compact.
		
		We say that $Y$ satisfies \emph{strong approximation} if the diagonal image of $Y(k)$ is dense in $Y(\RA_k)$. We say that $Y$ satisfies \emph{(arithmetic) purity of strong approximation} (see \cite[p. 336]{Cao-Liang-Xu}, \cite[Definition 1.3 (ii)]{Cao-Huang1}) if for every Zariski closed subset $Z\subset Y$ of codimension at least two, the open subset $Y\setminus Z$ satisfies strong approximation.

The \emph{Brauer--Manin set} $Y(\RA_k)^{\operatorname{Br}}$ is the closed subset of $Y(\RA_k)$ consisting of adelic points that are orthogonal to all the elements  of the Brauer group $\operatorname{Br}(Y)$. Class field theory implies that the closure of $Y(k)$ is contained in $Y(\RA_k)^{\operatorname{Br}}$. (See e.g. \cite[\S3.3]{PeyreBeyond} for more details.) We can similarly define \emph{strong approximation with Brauer--Manin obstruction} and its purity analogue as above upon replacing $Y(\RA_k)$ by $Y(\RA_k)^{\operatorname{Br}}$.

\subsection{Congruence neighbourhoods}\label{se:congneigh}
We now introduce a special kind of adelic sets. Let $Y$ and $\CY$ be as before. Enlarging $S$ if necessary, we may assume that $\CY(\widehat{\CO}_S)\neq\varnothing$ by the Lang--Weil estimate \cite{Lang-Weil} and Hensel's lemma (cf. e.g. \cite[\S1.7]{Cao-Huang2}). For $\nu\notin S,m\in\BZ_{>0}$ and  $\xi_\nu\in \CY(\CO_\nu)$, we define 
$$\CE_\nu^{\CY}(m;\xi_\nu):=\left\{x_\nu\in\CY(\CO_\nu):\operatorname{Mod}_{\nu,m}(x_\nu)=\operatorname{Mod}_{\nu,m}(\xi_\nu)\right\}.$$ This is a non-empty $\nu$-adic open-closed subset of $\CY(\CO_\nu)$. 
Now for $\mathfrak{l}$ an ideal of $\CO_S$, $\bxi=(\xi_\nu)_{\nu\mid \mathfrak{l}}\in\prod_{\nu\mid \mathfrak{l}}\CY(\CO_\nu)$, we define the \emph{congruence neighbourhood} associated to $\CY,\bxi$ of \emph{level} $\mathfrak{l}$ to be
$$\CE^{\CY}(\mathfrak{l};\bxi):=\prod_{\nu\mid \mathfrak{l}}\CE^{\CY}_\nu(\operatorname{ord}_{\nu}(\mathfrak{l});\xi_\nu)\times \prod_{\nu\nmid \mathfrak{l},\nu\notin S}\CY(\CO_\nu).$$
This is a non-empty open-closed subset of $\CY(\widehat{\CO}_S)$.  The following properties are easily verified. 
\begin{itemize}
	\item For any two collections of points $\bxi^\prime=(\bxi_\nu^\prime),\bxi^{\dprime}=(\bxi^{\dprime}_\nu)\in \prod_{\nu\mid \mathfrak{l}}\CY(\CO_\nu)$,
	\begin{equation}\label{eq:disjoint}
		\CE^{\CY}(\mathfrak{l};\bxi^\prime)\cap \CE^{\CY}(\mathfrak{l};\bxi^{\dprime})\neq\varnothing\Longleftrightarrow \CE^{\CY}(\mathfrak{l};\bxi^\prime)= \CE^{\CY}(\mathfrak{l};\bxi^{\dprime});
	\end{equation}
\item For $\mathfrak{l}_1,\mathfrak{l}_2$ with $\mathfrak{l}_1\mid \mathfrak{l}_2$ and $\bxi_1=(\xi_{1,\nu})_{\nu\mid\mathfrak{l}_1},\bxi_2=(\xi_{2,\nu})_{\nu\mid\mathfrak{l}_2}$ with $\operatorname{Mod}_{\nu,\operatorname{ord}_{\nu}(\mathfrak{l}_1)}(\xi_{1,\nu})=\operatorname{Mod}_{\nu,\operatorname{ord}_{\nu}(\mathfrak{l}_1)}(\xi_{2,\nu})$ for all $\nu\mid \mathfrak{l}_1$, we have
$\CE^{\CY}(\mathfrak{l}_2;\bxi_2)\subset \CE^{\CY}(\mathfrak{l}_1;\bxi_1)$. 
\end{itemize}
Hence the family of such congruence neighbourhoods $(\CE^{\CY}(\mathfrak{l};\bxi))_{\mathfrak{l},\bxi}$ forms a topological basis for $\CY(\widehat{\CO}_S)$. 
\begin{definition}\label{def:LCEf}
	For every non-empty open-closed (compact) subset $\CE\subset \CY(\widehat{\CO}_S)$, we define the \emph{covering exponent} $\CL(\CE)$ (depending on the model $\CY$) to be the norm of the largest ideal $\mathfrak{l}$ such that there exist (finitely many)  congruence neighbourhoods $\{\CE^\CY(\mathfrak{l};\bxi_i)\}_i$ of level all equal to $\mathfrak{l}$ with $$\CE=\bigsqcup_{i} \CE^\CY(\mathfrak{l};\bxi_i).$$
\end{definition}
The following simple observation will be useful.
\begin{lemma}\label{le:CLCEfc}
	Let $\CE\subset \CY(\widehat{\CO}_S)$ be non-empty open-closed, and suppose $\CE\neq \CY(\widehat{\CO}_S)$. Let $\CE^c:=\CY(\widehat{\CO}_S)\setminus \CE$. Then we have $$\CL(\CE^c)=\CL(\CE).$$ 
\end{lemma}
\begin{proof}
		Write for simplicity $\CL=\CL(\CE)$. By the definition above we can cover the compact set  $\CE$ by a finite disjoint union of congruence neighbourhoods   $$\CE=\bigsqcup_{i\in I}\CE^\CY(\mathfrak{l};\bxi_i)$$ with $\#\CO_S/\mathfrak{l}=\CL$. 
		By \eqref{eq:disjoint}, we can extend this to a finite disjoint union covering for $\CY(\widehat{\CO}_S)$, still of level $\mathfrak{l}$:
		$$\CY(\widehat{\CO}_S)=\bigsqcup_{i\in I'} \CE^\CY(\mathfrak{l};\bxi_i),$$ where $I\subset I'$.
	Then $$\CE^c=\bigsqcup_{i\in I'\setminus I} \CE^\CY(\mathfrak{l};\bxi_i),$$ and hence $\CL(\CE^c)\leqslant \CL$. The reverse inequality can be deduced in the same way.
\end{proof}
\subsection{Almost-Fano varieties}\label{se:heightsTamagawa}
(See \cite[\S3.4.2]{PeyreBeyond}.)
\begin{hypothesis}[cf. \cite{PeyreBeyond} Hypotheses 3.27]\label{def:almostFano}
	We call a nice proper $k$-variety $V$ \emph{almost Fano} if it satisfies the following hypotheses:
	\begin{enumerate}
		\item The set of rational points $V(k)$ is Zariski dense.
		\item The cohomological groups $H^1(V,\CO_V)$, $H^2(V,\CO_V)$ are all zero.
		\item The group $\Pic(\overline{V})$ is torsion free, and the group $\operatorname{Br}(V)/\operatorname{Im}(\operatorname{Br}(k))$ is finite.
		\item The cone of pseudoeffective divisors $\overline{\operatorname{Eff}}(V)$ is finitely generated.
		\item  The anticanonical line bundle $K_V^{-1}$ is within the interior of $\overline{\operatorname{Eff}}(V)$. 
	\end{enumerate}
	\begin{remarks}
		\hfill
		\begin{enumerate}
			\item The finiteness condition (3) implies that the Brauer--Manin set $V(\RA_k)^{\operatorname{Br}}$ is open-closed in $V(\RA_k)$.
			\item 		Recent progress on birational geometry shows that Fano varieties (i.e. nice projective varieties with ample anticanonical line bundle) are Mori Dream Spaces \cite[Corollary 1.3.2]{BCHM}. In particular their cones of pseudoeffective divisors are rational polyhedral. So Fano varieties satisfy the properties (2)--(5) above.
		\end{enumerate}
	\end{remarks}
\end{hypothesis}
\subsubsection{The constants $\alpha(V)$ and $\beta(V)$}
 (See \cite[D\'efinition 2.4]{Peyre}, \cite[Définition 4.8]{PeyreLille})
 
Recall $r:=\operatorname{rank}\operatorname{Pic}(V)$. Let $\Pic(V)^\vee$ be the dual lattice of $\Pic(V)$ inside $\Pic(V)^\vee_\BR\simeq \BR^r$. Let $\operatorname{d}y$ be the Haar measure on $\Pic(V)^\vee_\BR$ normalised so that $\Pic(V)^\vee$ has covolume one. Let $\overline{\operatorname{Eff}}(V)^\vee\subset\Pic(V)^\vee_\BR$ be the dual cone of $\overline{\operatorname{Eff}}(V)$.
We then define
\begin{equation}\label{eq:alphaV}
	\alpha(V):=\frac{1}{(r-1)!}\int_{\overline{\operatorname{Eff}}(V)^\vee}\operatorname{e}^{-\langle K_V^{-1},y\rangle}\operatorname{d}y;
\end{equation} 
\begin{equation}\label{eq:beta}
	\beta(V):=\#H^1(k,\operatorname{Pic}(\overline{V})).
\end{equation}

\subsubsection{Tamagawa measures on almost-Fano varieties}
(See \cite[\S2.2.3]{Peyre}.)

Let us fix a family of adelic measures $(\omega_\nu^V)_{\nu\in\mathfrak{M}(k)}$ on an almost-Fano variety $V$. 
Let $\CV$ be a proper integral model of $V$ over $\CO_S$ for a finite set $S\subset\mathfrak{M}(k)$ containing $\infty_k$. We enlarge $S$ if necessary, so that for every $\nu\not\in S$, we have $\Pic(\CV_{\overline{\BF_\nu}})\simeq \Pic(\overline{V})$ compatible with the actions of Galois groups. The geometric Frobenius $\operatorname{Fr}_\nu$ acts on $H^2_{\text{\'et}}(\CV_{\overline{\BF_\nu}},\BQ_l(1))\simeq \Pic(\overline{V})$ for any prime $l$ different from the characteristic of $\BF_\nu$.
Now on $\Re(s)>0$, we define the Artin L-functions
\begin{equation}\label{eq:artinLnu}
	L_\nu(s,\Pic(\overline{V})):=\frac{1}{\det(1-(\#\BF_\nu)^{-s}\operatorname{Fr}_\nu|\Pic(\overline{V})_\BQ)},\quad \text{for every }\nu\not\in S,
\end{equation}
\begin{equation}\label{eq:artinLS}
	L_S(s,\Pic(\overline{V})):=\prod_{\nu\not\in S}L_\nu(s,\Pic(\overline{V})).
\end{equation}
Then $L$-function $L_S(s,\Pic(\overline{V}))$ has a pole at $s=1$ of order $r=\operatorname{rank}(\Pic(V))$.
We now define the set of convergence factors 
\begin{equation}\label{eq:convergencefact}
	\lambda_\nu:=\begin{cases}
		1 & \text{ if } \nu\in S;\\ L_\nu(1,\Pic(\overline{V})) &\text{ if }\nu\not\in S.
	\end{cases}
\end{equation}
By the hypothesis (2) in Setting \ref{def:almostFano}, using the Grothendieck-Lefschetz formula and Deligne's proof of Weil's conjecture, one deduces (cf. \cite[Proposition 2.2.2]{Peyre} which requires only the properness assumption)
$$\frac{\omega_\nu^V(V(k_\nu))}{L_\nu(1,\Pic(\overline{V}))}=1+O((\#\BF_\nu)^{-\frac{3}{2}}),$$ for all $\nu\notin S$.
Therefore, the product measure
\begin{equation}\label{eq:Tamagawameas}
	\omega^V:=\frac{\lim_{s\to 1}(s-1)^rL_S(s,\Pic(\overline{V}))}{d_k^{\dim V/2}}\bigotimes_{\nu\in\mathfrak{M}(k)}\lambda_\nu^{-1}\omega_\nu^V
\end{equation} applied to $V(\RA_k)$ is absolutely convergent, and is independent of the choice of the finite set $S$.
 Note that $\frac{\omega^V}{\omega^V(V(\RA_k))}$ is a regular probability measure on $V(\RA_k)$, and coincides with the infinite product measure $\bigotimes_{\nu\in\mathfrak{M}(k)} \frac{\omega_\nu^V}{\omega_\nu^V(V(k_\nu))}$ by the uniqueness in the Riesz representation theorem.

\subsubsection{Tamagawa measures on open subvarieties}
Let $W\subset V$ be an dense open subset whose complement $Z=V\setminus W$ is of codimension at least two. For every $\nu\in\mathfrak{M}(k)$, the $\nu$-adic measure $\omega_\nu^V$ naturally restricts to the (open) measurable subset $W(k_\nu)$, which we denote by $\omega_\nu^W$.
According to \cite[Proposition 2.1]{Cao-Huang2}, the family of adelic measures $(\omega_\nu^W)_{\nu\in\mathfrak{M}(k)}$ together with the set of convergence factors \eqref{eq:convergencefact} define in the same way as \eqref{eq:Tamagawameas} a measure on $W(\RA_k)$, denoted by $\omega^V|_W$ (which can also be normalised to a probability measure).
\begin{remark}
	We note that the adelic topology of $W(\RA_k)$ is in general not the one induced by $V(\RA_k)$, and hence these two measure spaces have different $\sigma$-algebras.
\end{remark}

\section{Equidistribution and sieving}\label{se:equidistgeomsieve}
\begin{hypothesis}\label{hyp:almost-Fano}
	Let $V/k$ be almost-Fano, equipped with a family of adelic measures $(\omega_\nu^V)_{\nu\in\mathfrak{M}(k)}$. Let $\omega^V$ be the associated Tamagawa measure. 
\end{hypothesis} 
Throughout this section, we shall be working with varieties in Setting \ref{hyp:almost-Fano} satisfying the following.
\begin{hypothesis1}\label{hyp:deltaB}
	Assume that there exists a family of counting measures $(\Delta(B))_B$
	which converges weakly to $\omega^V$ in $V(\RA_k)^{\operatorname{Br}}$ as $B\to\infty$.
\end{hypothesis1}
\begin{remark}
	For instance, given any $V$ satisfying Principle \ref{prin:Manin-Peyre}, we can take $$\Delta(B)=\delta_{(V(k)\setminus M)_{\leqslant B}},$$ $\delta_{(V(k)\setminus M)_{\leqslant B}}$ being defined by \eqref{eq:seqVAk} in terms of the associated anticanonical height function and $M\subset V(k)$ a fixed thin subset.  But there can exist different ways of ordering rational points which also lead to the weak convergence of the corresponding family of counting measures (cf. \cite[\S4]{Peyre}).
\end{remark}

For every measurable $\CF\subset V(\RA_k)$ and $B>1$, we define the counting function
\begin{equation}\label{eq:countingDelta}
	\CN_{V}(\CF;B):=\int_{\CF}\operatorname{d}\Delta(B).
\end{equation}

\subsection{Local-to-global passage and the geometric sieve}\label{se:local-to-global}
Let $V$ be as in Setting \ref{hyp:almost-Fano}.
Let $(\Omega_\nu\subset V(k_\nu))_{\nu\in\mathfrak{M}(k)}$ be a collection of $\nu$-adic measurable sets satisfying \begin{equation}\label{eq:posmeas}
	\omega_\nu^V(\Omega_\nu)>0\text{ and } \omega^V_\nu(\partial\Omega_\nu)=0 \text{ for every } \nu\in\mathfrak{M}(k).
\end{equation}
We consider the measurable set $$\CE[(\Omega_{\nu})]:=\prod_{\nu\in\mathfrak{M}(k)}\Omega_\nu\subset V(\RA_k).$$
In this subsection we prove two theorems concerning the counting function $\CN_V\left(\CE[(\Omega_{\nu})];B\right)$.
Contrary to every single $\Omega_{\nu}$, the set $\CE[(\Omega_{\nu})]$ is not $\omega^V$-continuous if $\Omega_{\nu}\neq V(k_{\nu})$ for infinitely many $\nu$. Being opposed to each other, Theorem \ref{thm:Tamagawazero} states that if $\CE[(\Omega_{\nu})]$ has measure zero, then $\CN_V\left(\CE[(\Omega_{\nu})];B\right)$ has negligible contribution, while Theorem \ref{thm:keyOmega} provides a sufficient condition (named \textbf{(GS)}) to guarantee that $\CN_V\left(\CE[(\Omega_{\nu})];B\right)$ converges to the expected measure.

For every $N\geqslant 1$, we shall frequently make use of the following ``truncated'' adelic set \begin{equation}\label{eq:truncate}
	\CE_{N}[(\Omega_{\nu})]:= \prod_{\substack{\nu\in\infty_k\text{ or}\\\nu\not\in\infty_k: \#\BF_\nu\leqslant N}}\Omega_\nu\times \prod_{\substack{\nu\notin\infty_k: \#\BF_\nu>N}} V(k_\nu) \subset V(\RA_k).
\end{equation}
All implied constants in the rest of this subsection are allowed to depend on the family $(\Omega_\nu)_{\nu\in\mathfrak{M}(k)}$.
\begin{theorem}\label{thm:Tamagawazero}
	Assume that Hypothesis \ref{hyp:deltaB} holds. If $\omega^V\left(\CE[(\Omega_{\nu})]\right)=0$, then as $B\to \infty$,
	$$\CN_V\left(\CE[(\Omega_{\nu})];B\right)=o(1).$$
\end{theorem}
\begin{remark}
	Besides the immediate applications to counting rational points
	\begin{itemize}
		\item  lying in a proper closed subvariety; or
		\item inside of any adelic set of an open subvariety whose complement is a divisor;
	\end{itemize} Theorem \ref{thm:Tamagawazero} also applies to the situation of Theorem  \ref{thm:ratptsfibration} (1) as well as (2a) by taking $(\Omega_\nu=f(Y(k_\nu)))$. See Corollary \ref{co:paucityfibra} (including a verification of \eqref{eq:posmeas}). This generalises \cite[Theorems 1.2 \& 1.4]{Browning-Loughran} concerning thin sets.
\end{remark}
\begin{proof}[Proof of Theorem \ref{thm:Tamagawazero}]
Let $\varepsilon>0$ be fixed. That the set $\CE[(\Omega_{\nu})]$ having measure zero implies that the infinite product
$\prod_{\nu\in\mathfrak{M}(k)}\frac{\omega_\nu^V(\Omega_\nu)}{\omega_\nu^V(V(k_\nu))}$ diverges to zero. So there exists $N(\varepsilon)\geqslant 1$ such that,  $$\prod_{\substack{\nu\in\infty_k\text{ or}\\\nu\not\in\infty_k:  \#\BF_\nu\leqslant N(\varepsilon)}}\frac{\omega_\nu^V(\Omega_\nu)}{\omega_\nu^V(V(k_\nu))}<\varepsilon.$$
Then $\CE[(\Omega_{\nu})]\subset \CE_{N(\varepsilon)}[(\Omega_{\nu})]$ and 
$$\omega^V\left(\CE_{N(\varepsilon)}[(\Omega_{\nu})]\right)=\omega^V(V(\RA_k))\prod_{\substack{\nu\in\infty_k\text{ or}\\\nu\not\in\infty_k:  \#\BF_\nu\leqslant N(\varepsilon)}}\frac{\omega_\nu^V(\Omega_\nu)}{\omega_\nu^V(V(k_\nu))}<\varepsilon \omega^V(V(\RA_k)).$$
The set $\CE_{N(\varepsilon)}[(\Omega_{\nu})]$ is $\omega^V$-continuous thanks to the assumption \eqref{eq:posmeas}. Since $V(\RA_k)^{\operatorname{Br}}$ is open-closed, it is also $\omega^V$-continuous. So Theorem \ref{thm:portmanteau}  implies that, as $B\to \infty$,
\begin{align*}
	\lim_{B\to\infty}\CN_V\left(\CE_{N(\varepsilon)}[(\Omega_{\nu})]\cap V(\RA_k)^{\operatorname{Br}};B\right)= \omega^V\left(\CE_{N(\varepsilon)}[(\Omega_{\nu})]\cap V(\RA_k)^{\operatorname{Br}}\right).
\end{align*}
It follows that 
\begin{align*}
	\limsup_{B\to\infty}\CN_V\left(\CE[(\Omega_{\nu})];B\right)&=\limsup_{B\to\infty}\CN_V\left(\left(\CE[(\Omega_{\nu})]\right)\cap V(\RA_k)^{\operatorname{Br}};B\right)\\ &\leqslant \limsup_{B\to\infty}\CN_V\left(\CE_{N(\varepsilon)}[(\Omega_{\nu})]\cap V(\RA_k)^{\operatorname{Br}};B\right)\\ &= \omega^V\left(\CE_{N(\varepsilon)}[(\Omega_{\nu})]\cap V(\RA_k)^{\operatorname{Br}}\right)\\ &\leqslant \omega^V\left(\CE_{N(\varepsilon)}[(\Omega_{\nu})]\right)<\varepsilon \omega^V(V(\RA_k)).
\end{align*}
This finishes the proof.
\end{proof}

Our next theorem on the abstract geometric sieve is inspired by the work of Poonen--Stoll \cite[Lemma 20]{Poonen-Stoll}. For every $\nu\in\mathfrak{M}(k)$, we write $\Omega_\nu^c:=V(k_\nu)\setminus \Omega_\nu$.  For $N>1$, we define
\begin{equation}\label{eq:CR}
	\CR((\Omega_\nu);N,B):=\CN_{V}\left(\bigcup_{\substack{\nu\notin\infty_k: \#\BF_\nu>N}}\Omega_{\nu}^c\times \prod_{\nu^\prime\neq \nu}V(k_{\nu^\prime});B\right), 
\end{equation} and we write
$$h(N):=\limsup_{B\to\infty} \CR((\Omega_\nu);N,B).$$
\begin{condGS}
	As $N\to\infty$, we have $$h(N)=o(1).$$ 
\end{condGS}
We recall \eqref{eq:truncate}.
\begin{theorem}\label{thm:keyOmega}
	Assume that Hypothesis \ref{hyp:deltaB} holds, that $\CE[(\Omega_{\nu})]\subset V(\RA_k)^{\operatorname{Br}}$, and that the family $(\Omega_\nu)$ satisfies \textbf{(GS)}. Then we have $\omega^V\left(\CE[(\Omega_{\nu})]\right)>0$, and \begin{equation}\label{eq:asympCS}
	\lim_{B\to\infty}\CN_V\left(\CE[(\Omega_{\nu})];B\right)=\omega^V\left(\CE[(\Omega_{\nu})]\right).
\end{equation}
\end{theorem}
\begin{remarks}\hfill
	\begin{enumerate}
		\item Theorem \ref{thm:keyOmega} applies to the situation of Principle \ref{prin:purity} by taking $\CE[(\Omega_{\nu})]$ to be any sufficiently small adelic set of $W(\RA_k)$, and also applies to Theorem \ref{thm:ratptsfibration} (2b) with $(\Omega_\nu=f(Y(k_\nu)))$, condition \textbf{(GS)} being deduced from Proposition \ref{prop:surjectivity}. 
		\item 	On identifying $\Omega_{\nu}$ with the adelic set $\Omega_{\nu}\times \prod_{\nu^\prime\neq \nu}V(k_{\nu^\prime})\subset V(\RA_k)$,
		we regard the family $(\Omega_{\nu})$ as independent events in the probability space $\left(V(\RA_k),\frac{\omega^V}{\omega^V(V(\RA_k))}\right)$.  Let us consider the measurable set $$\CG[(\Omega_{\nu})]:=\bigcup_{N=1}^\infty\bigcap_{\substack{\nu\notin\infty_k: \#\BF_\nu>N}}\left(\Omega_{\nu}\times \prod_{\nu^\prime\neq \nu}V(k_{\nu^\prime})\right),$$ which is a \emph{tail} event interpreted as ``all but finitely many of the events $(\Omega_{\nu})$ occur''.  Its complement  $$ \CG[(\Omega_{\nu})]^c:=\bigcap_{N=1}^\infty \bigcap_{\substack{\nu\notin\infty_k: \#\BF_\nu>N}}\left(\Omega_{\nu}^c\times \prod_{\nu^\prime\neq \nu}V(k_{\nu^\prime})\right)$$ is also a tail event, and can equally be interpreted as  ``infinitely many of the events $(\Omega_{\nu}^c)$ occur''.  Since the infinite product $\prod_{\nu\in\mathfrak{M}(k)} \frac{\omega_\nu^V(\Omega_{\nu})}{\omega_\nu^V(V(k_\nu))}$ converges to a non-zero limit if and only if the series $\sum_{\nu\in\mathfrak{M}(k)}  \frac{\omega_\nu^V(\Omega^c_{\nu})}{\omega_\nu^V(V(k_\nu))}$ converges, applying the Borel--Cantelli lemma (see \cite[10.10]{Folland}), we obtain
		\begin{enumerate}
			\item If $\omega^V\left(\CE[(\Omega_{\nu})]\right)=0$, then the event $\CG[(\Omega_{\nu})]^c$ occurs with probability one;
			\item If $\omega^V\left(\CE[(\Omega_{\nu})]\right)>0$, then  the event  $ \CG[(\Omega_{\nu})]$ occurs with probability one.
		\end{enumerate}
		From this probabilistic point of view, condition \textbf{(GS)} provides 
		an inverse of (b). By requiring that the tail event $\CG[(\Omega_{\nu})]^c$  has measure zero ``uniformly'' in terms of $B$, it not only gives a sufficient condition for the set $\CE[(\Omega_{\nu})]$ to have positive measure, but also guarantees the  convergence of the counting function \eqref{eq:countingDelta}.
	\end{enumerate}
\end{remarks}
\begin{proof}[Proof of Theorem \ref{thm:keyOmega}]
Since the adelic space has the product topology, $V(\RA_k)^{\operatorname{Br}}$ is open, and $\CE[(\Omega_{\nu})]\subset V(\RA_k)^{\operatorname{Br}}$ by assumption, we may fix $N_0>1$ such that $\CE_{N_0}[(\Omega_{\nu})]\subset V(\RA_k)^{\operatorname{Br}}$.
For every $N_1,N_2\in\BZ_{>0}$ with $ N_1<N_2$, let us define $$\CE_{[N_1,N_2]}[(\Omega_{\nu})]:=\CE_{N_0}[(\Omega_{\nu})]\bigcap\left(\prod_{\substack{\nu\in\infty_k \text{ or}\\\nu\notin\infty_k:N_1<\#\BF_\nu\leqslant N_2}}\Omega_\nu\times \prod_{\substack{\nu\notin\infty_k:\#\BF_\nu\leqslant N_1\text{ or }> N_2}}V(k_\nu)\right)\subset V(\RA_k)^{\operatorname{Br}}.$$ 
Clearly $\CE_{[N_1,N_2]}[(\Omega_{\nu})]$ is $\omega^V$-continuous, thanks to \eqref{eq:posmeas}. Theorem \ref{thm:portmanteau} now implies that for every such fixed $N_1,N_2$, \begin{equation}\label{eq:aN}
		\lim_{B\to\infty}\CN_V\left(\CE_{[N_1,N_2]}[(\Omega_{\nu})];B\right)=\omega^V\left(\CE_{[N_1,N_2]}[(\Omega_{\nu})]\right).
	\end{equation}

\textbf{Step I.} We first show that $\omega^V\left(\CE[(\Omega_{\nu})]\right)>0$. Since $$\CN_V(\CE_{N_0}[(\Omega_{\nu})];B)-\CN_V\left(\CE_{[N_1,N_2]}[(\Omega_{\nu})];B\right)\leqslant \CR((\Omega_\nu);N_1,B),$$  according to \eqref{eq:aN} and 
taking $\limsup$ on both sides, we obtain
\begin{align*}
	\omega^V(\CE_{N_0}[(\Omega_{\nu})])-\omega^V(\CE_{[N_1,N_2]}[(\Omega_{\nu})])\leqslant h(N_1).
\end{align*}
Moreover, whenever $N_0\leqslant N_1<N_2$,
\begin{align*}
	&\omega^V(\CE_{N_0}[(\Omega_{\nu})])\left|\prod_{\substack{\nu\notin\infty_k:N_1<\#\BF_\nu\leqslant N_2}}\lambda_\nu^{-1}\omega_\nu^V(\Omega_\nu)-1\right| \\ 
	=&\lim_{B\to\infty}\left|\left(\prod_{\substack{\nu\notin\infty_k:N_1<\#\BF_\nu\leqslant N_2}}\lambda_\nu^{-1}\omega_\nu^V(V(k_\nu))\right)\CN_V\left(\CE_{[N_1,N_2]}[(\Omega_{\nu})];B\right)-\CN_{V}(\CE_{N_0}[(\Omega_{\nu})];B)\right|\\ \leqslant &\left|\prod_{\substack{\nu\notin\infty_k:N_1<\#\BF_\nu\leqslant N_2}}\lambda_\nu^{-1}\omega_\nu^V(V(k_\nu))-1\right|\lim_{B\to\infty}\CN_V\left(\CE_{[N_1,N_2]}[(\Omega_{\nu})];B\right)+h(N_1)\\ \leqslant&\left|\prod_{\substack{\nu\notin\infty_k:N_1<\#\BF_\nu\leqslant N_2}}\lambda_\nu^{-1}\omega_\nu^V(V(k_\nu))-1\right|\omega^V(\CE_{N_0}[(\Omega_{\nu})])+h(N_1).
\end{align*}
By construction, the Tamagawa measure \eqref{eq:Tamagawameas} with the family of convergence factors $(\lambda_\nu)$ is absolutely convergent. So $\prod_{\substack{\nu\notin\infty_k:N_1<\#\BF_\nu\leqslant N_2}}\lambda_\nu^{-1}\omega_\nu^V(V(k_\nu))$ is arbitrarily close to $1$ provided that $N_1<N_2$ are large. So condition \textbf{(GS)} implies that $\prod_{\substack{\nu\notin\infty_k:N_1<\#\BF_\nu\leqslant N_2}}\lambda_\nu^{-1}\omega_\nu^V(\Omega_\nu)$ is also arbitrarily close to $1$ uniformly for arbitrarily large $N_1<N_2$. 
Therefore, by Cauchy's criterion, this confirms the convergence of $\prod_{\substack{\nu\in\mathfrak{M}(k)\setminus\infty_k}}\lambda_\nu^{-1}\omega_\nu^V(\Omega_\nu)$ whence $\omega^V\left(\CE[(\Omega_{\nu})]\right)>0$.

\textbf{Step II.} 
We write for simplicity
$$g(B):=\CN_V\left(\CE[(\Omega_{\nu})];B\right).$$
To complete the proof we now show $\lim_{B\to\infty} g(B)$ exists and is equal to $\omega^V\left(\CE[(\Omega_{\nu})]\right)$.

For every $N\geqslant N_0$, let $g_N(B):=\CN_V\left(\CE_{[1,N]}[(\Omega_{\nu})];B\right)$.
First, we observe that $g(B)\leqslant g_N(B)$. So
$$\limsup_{B\to\infty} g(B)\leqslant\limsup_{B\to\infty}g_N(B)=\omega^V\left(\CE_{[1,N]}[(\Omega_{\nu})]\right),$$ and therefore $$\limsup_{B\to\infty}g(B)\leqslant \lim_{N\to\infty}\omega^V\left(\CE_{[1,N]}[(\Omega_{\nu})]\right)=\omega^V\left(\CE[(\Omega_{\nu})]\right).$$
On the other hand, since $$g_N(B)-g(B)\leqslant \CR((\Omega_\nu);N,B),$$ we have $$\limsup_{B\to\infty}\left(g_N(B)-g(B)\right)\leqslant h(N).$$ Hence
$$\liminf_{B\to\infty} g(B)\geqslant \liminf_{B\to\infty} g_N(B)-\limsup_{B\to\infty}\left(g_N(B)-g(B)\right)\geqslant \omega^V\left(\CE_{[1,N]}[(\Omega_{\nu})]\right)-h(N),$$ and therefore
$$\liminf_{B\to\infty} g(B)\geqslant \lim_{N\to\infty} \left(\omega^V\left(\CE_{[1,N]}[(\Omega_{\nu})]\right)-h(N)\right)=\omega^V\left(\CE[(\Omega_{\nu})]\right).$$
This finishes the proof.
\end{proof}

\subsection{Effective equidistribution condition}\label{se:condEE}
We fix $V$ as in Setting \ref{hyp:almost-Fano}. Let $\CV$ be a proper integral model of $V$  over $\CO_S$, where $S\subset \mathfrak{M}(k)$ is a finite set of places containing $\infty_k$. We define the measure \begin{equation}\label{eq:omegaSV}
	\widetilde{\omega}_S^V:=\bigotimes_{\nu\in S}\omega_\nu^V\text{ on }V(k_S).
\end{equation} We write $\operatorname{pr}_S:V(\RA_k)\to V(k_S)$ for the natural projection and let $\mathfrak{F}$ be a family of $\widetilde{\omega}_S^V$-continuous sets contained in $\operatorname{pr}_S(V(\RA_k)^{\operatorname{Br}})$. 
The following condition that we now formulate is an effective version of Hypothesis \ref{hyp:deltaB} which depends on the integral model $\CV$ and the family $\mathfrak{F}$. We require certain uniformity of levels in the error term.
\begin{condEE}
 Assume that there exist $\gamma\geqslant 0$,  and a continuous function $h:\BR_{>0}\to\BR_{>0}$ with $h(x)=o(1),x\to\infty$, satisfying the following property:
		
		Let $\CF_0\in\mathfrak{F}$, and let $\CE^\CV(\mathfrak{l};\bxi)\subset \CV(\widehat{\CO}_S)$ be a congruence neighbourhood  associated to $\CV,\bxi\in\prod_{\nu\mid \mathfrak{l}} \CV(\CO_\nu)$ of level $\mathfrak{l}$. Assume  $\CF:=\CF_0\times \CE^\CV(\mathfrak{l};\bxi)\subset V(\RA_k)^{\operatorname{Br}}$. Then we have
		$$\CN_V(\CF;B)=\omega^V(\CF)+O_{\CF_0}\left((\#\CO_S/\mathfrak{l})^\gamma h(B)\right),$$  where the implied constant depends on $\CF_0$ but is uniform for every such $\mathfrak{l},\bxi$.
\end{condEE}

\begin{remark}
	The polynomial-type bound on the norm of $\mathfrak{l}$ is compatible with all known examples (see e.g. \cite{Birch} for projective hypersurfaces of low degree and \cite[Theorem 3.1]{Cao-Huang2} for affine homogeneous varieties).
\end{remark}
\begin{proposition}\label{prop:EEimpliesgeneral}
	Suppose that $V$ satisfies \textbf{(EE)} with respect to $(\CV,\mathfrak{F})$. Then 
	for every $\omega^V$-continuous adelic set of the form $\CF=\CF_0\times\CF_1\subset V(\RA_k)^{\operatorname{Br}}$ where  $\CF_0\in\mathfrak{F}$ and  $\CF_1\subset \CV(\widehat{\CO}_S)$ non-empty open-closed, we have
	$$\CN_V(\CF;B)=\omega^V(\CF)+O_{\CF_0,\varepsilon}(\CL(\CF_1)^{\gamma+\dim V+\varepsilon} h(B)),$$ where the covering exponent $\CL(\CF_1)$ is defined in Definition \ref{def:LCEf}, and the implied constant depends on $\CF_0$ but is uniform with respect to $\CF_1$. 
\end{proposition}
\begin{proof}
	To ease notation, we write $\CL$ for $\CL(\CF_1)$.
	We cover $\CF_1$ by a finite disjoint union of congruence neighbourhoods $\{\CE^\CV(\mathfrak{l};\bxi_i)\}_{i\in I}$ of level $\mathfrak{l}$ with $\CL=\#\CO_S/\mathfrak{l}$:
	$$\CF_1=\bigsqcup_{i\in I} \CE^\CV(\mathfrak{l};\bxi_i).$$
	Our goal is to estimate $\# I$.

	We now fix a finite set $S'\subset\mathfrak{M}(k)$ containing $S$ such that $\CV$ is smooth over $\CO_{S'}$. If $\nu\mid\mathfrak{l}$ and $\nu\not\in S'$, using the Lang--Weil estimate \cite{Lang-Weil} and Hensel's lemma  (see e.g. \cite[Lemma 2.1]{Browning-Loughran}), we obtain
	\begin{equation}\label{eq:LW1}
		\#\CV(\CO_\nu/\mathfrak{m}_\nu^{\operatorname{ord}_{\nu}(\mathfrak{l})})=(\#\BF_\nu)^{(\dim V)\operatorname{ord}_{\nu}(\mathfrak{l})}\left(1+O((\#\BF_\nu)^{-\frac{1}{2}})\right),
	\end{equation} where the implied constant depends only on $\CV$.
	By Lemma \ref{le:modelmeasurecomp}, there exists $n_0\in\BN$ depending only on $\CV$ such that for every $n\geqslant n_0$ and for every $\nu\in S'\setminus S$, 
	\begin{equation}\label{eq:LW2}
		\#\CV(\CO_\nu/\mathfrak{m}_\nu^{n})=(\#\BF_\nu)^{(n-n_0)\dim V}\#\CV(\CO_\nu/\mathfrak{m}_\nu^{n_0})\leqslant (\#\BF_\nu)^{n\dim V}\#\CV(\CO_\nu/\mathfrak{m}_\nu^{n_0}).
	\end{equation} Note that the inequality above trivially holds when $n<n_0$. We define 
	$$A_1:=\prod_{\substack{\nu\in S'\setminus S}}\#\CV(\CO_\nu/\mathfrak{m}_\nu^{n_0}),$$
 which is an absolute constant depending only on $\CV$. Since 
	$$\left(\prod_{\nu\mid\mathfrak{l}}\operatorname{Mod}_{\nu,\operatorname{ord}_\nu(\mathfrak{l})}\right)(\bxi_{i_1})\neq \left(\prod_{\nu\mid\mathfrak{l}}\operatorname{Mod}_{\nu,\operatorname{ord}_\nu(\mathfrak{l})}\right)(\bxi_{i_2}) \quad \text{in}\quad \prod_{\nu\mid\mathfrak{l}}\CV(\CO_\nu/\mathfrak{m}_\nu^{\operatorname{ord}_{\nu}(\mathfrak{l})}) \quad \text{ for any }i_1\neq i_2,$$ by \eqref{eq:LW1} \eqref{eq:LW2}, we obtain
	\begin{align*}
		\#I&\leqslant \prod_{\nu\mid\mathfrak{l}}\#\CV(\CO_\nu/\mathfrak{m}_\nu^{\operatorname{ord}_{\nu}(\mathfrak{l})}),\\&\leqslant
\prod_{\substack{\nu\mid \mathfrak{l}, \nu\not\in S'}}(\#\BF_\nu)^{(\dim V)\operatorname{ord}_{\nu}(\mathfrak{l})}\left(1+O((\#\BF_\nu)^{-\frac{1}{2}})\right)		\times \prod_{\substack{\nu\mid\mathfrak{l},\nu\in S'\setminus S}}(\#\BF_\nu)^{(\dim V)\operatorname{ord}_{\nu}(\mathfrak{l})}\#\CV(\CO_\nu/\mathfrak{m}_\nu^{n_0})\\ &\leqslant A_1 A_2^{\#\{\nu:\nu\mid \mathfrak{l}\}}\prod_{\nu\mid \mathfrak{l}}(\#\BF_\nu)^{(\dim V)\operatorname{ord}_{\nu}(\mathfrak{l})}\ll_\varepsilon \CL^{\dim V+\varepsilon},
	\end{align*} where $A_2>0$ is another absolute constant arising from  the implied constant in \eqref{eq:LW1}.
	
	Applying condition \textbf{(EE)} to each $\CF_0\times\CE^\CV(\mathfrak{l};\bxi_i)\subset V(\RA_k)^{\operatorname{Br}}$, we finally obtain \begin{align*}
		\CN_V(\CF;B)=&\sum_{i\in I}\CN_V(\CF_0\times\CE^\CV(\mathfrak{l};\bxi_i);B)\\ =&\left(\sum_{i\in I}\omega^V(\CF_0\times\CE^\CV(\mathfrak{l};\bxi_i))\right)+O_{\CF_0}\left((\# I)\CL^\gamma h(B)\right)\\ =&\omega^V(\CF)+O_{\CF_0,\varepsilon}(\CL^{\gamma+\dim V+\varepsilon} h(B)).
	\end{align*}
	The proof is thus completed.
\end{proof}

\subsection{The Selberg sieve for almost-Fano varieties}\label{se:Selbersieve}
In this subsection, based on condition \textbf{(EE)}, we seek to establish an effective sieving result for the number of rational points of bounded height on almost-Fano varieties satisfying local conditions ``modulo prime powers''. The main result is Theorem \ref{thm:Selbergsieve}  as an effective version of Theorem \ref{thm:Tamagawazero}, which generalises \cite[Theorem 1.7]{Browning-Loughran} where $V$ is a projective quadratic hypersurface. Our key analytic input is (a generalisation of) Selberg's sieve (Theorem \ref{thm:Selberg}), best suitable for our application. 

For $\CP$ any finite set of places of a number field $k$, we use the notation $\Pi(\CP):=\prod_{\nu\in\CP} \mathfrak{p}_\nu$. For any ideal $\mathfrak{b}$ we let $\CP(\mathfrak{b}):=\{\nu\notin\infty_k:\nu\mid \mathfrak{b}\}$.
Let $\mathbf{N}$ be the norm map of ideals, i.e. $\mathbf{N}(\mathfrak{b})=\#\left(\CO_k/\mathfrak{b}\right)$. Define the function $\tau_3$ by $\tau_3(\mathfrak{b}):=\sum_{(\mathfrak{b}_1,\mathfrak{b}_2,\mathfrak{b}_3):\mathfrak{b}=\mathfrak{b}_1\mathfrak{b}_2\mathfrak{b}_3}1$.
Let $\CA=(a_{\mathfrak{b}})_{\mathfrak{b}\in I}$ be a sequence of non-negative real numbers indexed by a finite set $I$ of ideals of $\CO_k$.
The following provides an upper bound estimate for the sifting function
$$S(\CA,\CP):=\sum_{\mathfrak{b}\in I:(\mathfrak{b},\Pi(\CP))=1}a_{\mathfrak{b}}.$$
\begin{theorem}[Selberg, cf. \cite{I-K} Theorem 6.4, \cite{Rieger} Satz 1]\label{thm:Selberg}
	Let $\mathfrak{X}>0$ and let $g$ be a multiplicative arithmetic function supported on square-free ideals satisfying $0<g(\mathfrak{p}_\nu)<1$ for every $\nu\in\CP$ and $g(\mathfrak{p}_\nu)=0$ for every $\nu\notin\CP$. Then for every $D>1$, we have
	$$S(\CA,\CP)\leqslant \frac{\mathfrak{X}}{J(\CP,D)}+\sum_{\substack{\mathfrak{d}\mid \Pi(\CP)\\\mathbf{N}(\mathfrak{d})<D}}\tau_3(\mathfrak{d})\left|\sum_{\mathfrak{b}\in I:\mathfrak{d}\mid \mathfrak{b}}a_{\mathfrak{b}}-g(\mathfrak{d})\mathfrak{X}\right|,$$ where $$J(\CP,D):=\sum_{\substack{\mathfrak{d}\mid \Pi(\CP)\\\mathbf{N}(\mathfrak{d})<\sqrt{D}}}\prod_{\nu\mid \mathfrak{d}}\frac{g(\mathfrak{p}_\nu)}{1-g(\mathfrak{p}_\nu)}.$$ 
\end{theorem}

Now we fix a variety $V$ as in Setting \ref{hyp:almost-Fano}, with a fixed proper $\CO_S$-integral model $\CV$, where $S$  is a finite set of places containing $\infty_k$. For every collection $(\Omega_{\nu})_{\nu\notin S}$ of non-empty open-closed sets $\Omega_\nu\subset \CV(\CO_\nu)$, and for every (not necessarily finite) set of places $S'\subset \mathfrak{M}(k)\setminus S$, we let 
$$\CE^{\CV}_{(S')}[(\Omega_{\nu})]:=\prod_{\nu\in S'}\Omega_\nu\times \prod_{\nu'\notin S'\cup S}\CV(\CO_{\nu'})\times V(k_S)\subset V(\RA_k).$$
  Now for every $N\geqslant 1$, we consider the finite set of places \begin{equation}\label{eq:CPN}
	\CP_N:=\{\nu\notin S:\#\BF_v<N,0<\omega_\nu^V(\Omega_\nu)<\omega_\nu^V( \CV(\CO_\nu))\}.
\end{equation} 
\begin{theorem}\label{thm:Selbergsieve}
	Assume that condition \textbf{(EE)} holds with exponent $\gamma$ and function $h$ with respect to $(\CV,\mathfrak{F})$. Let $(\Omega_\nu)_{\nu\notin S}$ be a collection of open-closed non-empty sets. Assume that there exists $n_0\in\BZ_{>0}$ such that for all $\nu\notin S$, \begin{equation}\label{eq:CLbdk}
		\CL\left(\Omega_{\nu}\times\prod_{\substack{\nu^\prime\notin S,\nu'\neq \nu}}\CV(\CO_{\nu'})\right)\leqslant (\#\BF_v)^{n_0}.
	\end{equation} (Recall Definition \ref{def:LCEf} for $\CL$.) Assume moreover $V(\RA_k)^{\operatorname{Br}}=V(\RA_k)$. Then for every $N\geqslant 1$,
	$$\CN_V(\CE^{\CV}_{(\CP_N)}[(\Omega_\nu)];B)\ll_\varepsilon G(N)^{-1}+N^{2n_0(\gamma+\dim V)+2+\varepsilon} h(B),$$ where the function $G$ is defined by (recall $\Omega_\nu^c:= V(k_\nu)\setminus \Omega_\nu$) \begin{equation}\label{eq:GN}
		G(x):=\sum_{\substack{\mathfrak{d}\mid \Pi(\CP_N)\\\mathbf{N}(\mathfrak{d})<x }}\prod_{\substack{\nu\mid \mathfrak{d}}}\frac{\omega_\nu^V(\Omega_\nu^c)}{\omega_\nu^V(\Omega_\nu)}.
	\end{equation}
 The implied constant above is uniform regarding $B,N$ and the family $(\Omega_\nu)_{\nu\notin S}$.
\end{theorem}
\begin{remarks}
	\hfill
	\begin{enumerate}
		\item One flexibility of executing the Selberg sieve is that no condition on the sieve dimension is required, compared to the Brun combinatorial sieve (cf. \cite[Fundamental Lemma 6.3, Condition (6.47)]{I-K}).
		\item 	Under the assumption \eqref{eq:CLbdk}, we have for every ideal $\mathfrak{b}$ with $\nu\mid\mathfrak{b}\Rightarrow\nu\notin S$, 
		\begin{equation}\label{eq:bdCL}
			\CL\left(\prod_{\nu\mid \mathfrak{b}}\Omega_\nu\times \prod_{\nu\notin S,\nu\nmid\mathfrak{b}}\CV(\CO_\nu)\right)\leqslant \prod_{\nu\mid\mathfrak{b}}(\#\BF_\nu)^k\leqslant \mathbf{N}(\mathfrak{b})^k,
		\end{equation} and Proposition \ref{prop:EEimpliesgeneral} gives
		\begin{equation}\label{eq:CLEE}
			\CN_V(\CE^{\CV}_{(\CP_N)}[(\Omega_\nu)];B)=\omega^V(\CE^{\CV}_{(\CP_N)}[(\Omega_\nu)])+O_\varepsilon\left(\mathbf{N}(\Pi(\CP_N))^{n_0c+\varepsilon}h(B)\right)
		\end{equation} for a certain $c>0$.
		The key point of Theorem \ref{thm:Selbergsieve} is that, although it only provides an upper bound, the secondary term which depends polynomially on $N$ is considerably small, compared to the one in \eqref{eq:CLEE} which grows exponentially in $N$. This is crucial in obtaining the estimates in Theorems \ref{thm:ratptsfibration} and \ref{thm:primevalue}.
	\end{enumerate}
\end{remarks}
\begin{proof}[Proof of Theorem \ref{thm:Selbergsieve}]
	Define the set of square-free ideals $$I:=\{\mathfrak{b}:\mathfrak{b}\mid\Pi(\CP_N)\}$$ of $\CO_k$ and let
	$$g(\mathfrak{p}_\nu):=\begin{cases}
		\frac{\omega_\nu^V(\Omega_\nu^c)}{\omega_\nu^V(V(k_\nu))} &\text{ if } \nu\in \CP_N;\\ 0 &\text{ otherwise}.
	\end{cases}$$ We extend $g$ to a multiplicative function supported on the set $I$ and clearly $0<g(\mathfrak{p}_\nu)<1$ whenever $\nu\in\CP_N$.
	
	We consider the sequence $\CA=(a_{\mathfrak{b}}(B))_{\mathfrak{b}\in I}$ where $$a_{\mathfrak{b}}(B):=\CN_V\left(\CE^{\CV}_{(\CP(\mathfrak{b}))}[(\Omega_{\nu}^c)]\setminus\left(\bigcup_{\mathfrak{b}'\in I\setminus\{\mathfrak{b}\},\mathfrak{b}\mid\mathfrak{b}'}\CE^{\CV}_{(\CP(\mathfrak{b}'))}[(\Omega_{\nu}^c)]\right);B\right).$$ 
	Since $\Omega_{\nu}^c\neq\varnothing$ for every $\nu\in \CP_N$, by Lemma \ref{le:CLCEfc}, by the assumption \eqref{eq:CLbdk}, and using the bound \eqref{eq:bdCL}, condition \textbf{(EE)} and Proposition \ref{prop:EEimpliesgeneral} imply that for every $\mathfrak{d}\mid \Pi(\CP_N)$, 
	\begin{equation}\label{eq:sumab}
	\begin{split}
		\sum_{\mathfrak{b}\in I,\mathfrak{d}\mid \mathfrak{b}}a_{\mathfrak{b}}(B)&=\CN_V\left(\CE^{\CV}_{(\CP(\mathfrak{d}))}[(\Omega_{\nu}^c)];B\right)\\ 	&=\left(\prod_{\nu\mid\mathfrak{d}}\frac{\omega_\nu^V(\Omega_\nu^c)}{\omega_\nu^V(V(k_\nu))}\right)\omega^V(V(\RA_k))+O_\varepsilon\left(\mathbf{N}(\mathfrak{d})^{n_0(\gamma+\dim V)+\varepsilon}h(B)\right)\\ &=\left(\prod_{\nu\mid \mathfrak{d}} g(\mathfrak{p}_\nu)\right)\mathfrak{X}+O_\varepsilon\left(\mathbf{N}(\mathfrak{d})^{n_0(\gamma+\dim V)+\varepsilon}h(B)\right),
	\end{split} 	
\end{equation} where $\mathfrak{X}:=\omega^V(V(\RA_k))$.

	We are now in a position to apply Theorem \ref{thm:Selberg}.
	With the notation above, \begin{align*}
		\CN_V(\CE^{\CV}_{(\CP_N)}[(\Omega_\nu)];B)= S(\CA,\CP_N).
	\end{align*}
	We obtain that uniformly for $N\geqslant 1$, on using \eqref{eq:sumab}, on taking $D=N^2$ in Theorem \ref{thm:Selberg}, and on using the bound $\tau_3(\mathfrak{d})\ll_\varepsilon \mathbf{N}(\mathfrak{d})^\varepsilon$, we obtain
	\begin{align*}
		\CN_V(\CE^{\CV}_{(\CP_N)}[(\Omega_\nu)];B)&\ll_\varepsilon \frac{\mathfrak{X}}{J(\CP_N,N^2)}+h(B)\sum_{\substack{\mathfrak{d}\mid\Pi(\CP_N)\\\mathbf{N}(\mathfrak{d})<N^2}}\tau_3(\mathfrak{d})\mathbf{N}(\mathfrak{d})^{n_0(\gamma+\dim V)+\varepsilon}\\ &\ll_\varepsilon \left(\sum_{\substack{\mathfrak{d}\mid \Pi(\CP_N)\\ \mathbf{N}(\mathfrak{d})<N}}\prod_{\substack{\nu\mid \mathfrak{d}}}\frac{g(\mathfrak{p}_\nu)}{1-g(\mathfrak{p}_\nu)}\right)^{-1}+h(B)\sum_{\mathfrak{d}:\mathbf{N}(\mathfrak{d})<N^2}\mathbf{N}(\mathfrak{d})^{n_0(\gamma+\dim V)+\varepsilon}\\ &\ll G(N)^{-1}+N^{2n_0(\gamma+\dim V+1)+\varepsilon} h(B).
	\end{align*}
	This finishes the proof.
\end{proof}
\begin{corollary}\label{cor:Selberg}
	Under the assumption of Theorem \ref{thm:Selbergsieve}, 
	uniformly  for $B, N$ and $(\Omega_\nu)_{\nu\notin S}$, we have
	$$\CN_V(\CE^{\CV}_{(\mathfrak{M}(k)\setminus S)}[(\Omega_\nu)];B)\ll_\varepsilon G(N)^{-1}+N^{2n_0(\gamma+\dim V+1)+\varepsilon} h(B).$$
\end{corollary}
\begin{proof}
	This follows directly from Theorem \ref{thm:Selbergsieve}, since for every $N\geqslant 1$, \begin{equation*}
		\CN_V(\CE^{\CV}_{(\mathfrak{M}(k)\setminus S)}[(\Omega_\nu)];B)\leqslant \CN_V(\CE^{\CV}_{(\CP_N)}[(\Omega_\nu)];B).\qedhere
	\end{equation*}
\end{proof}

\section{Universal torsor method}\label{se:univtor}
The notion of \emph{universal torsors} is first introduced by Colliot-Thélène and Sansuc in \cite{CT-Sansuc}. Roughly speaking, having simpler geometry, they are torsors under the Néron-Severi torus, and rational points can be lifted to integral points on universal torsors, which are often easier to handle. Main references of this section are \cite[\S5]{Salberger}, \cite[\S4]{Peyre2}, \cite[\S4.3]{PeyreBeyond}.

\subsection{Lifting into universal torsors}\label{se:lifting}
Let $V$ be an almost-Fano variety as in Setting \ref{def:almostFano}. Universal torsors over $V$ exist (cf. \cite[\S2.0.4]{CT-Sansuc}, \cite[Construction 4.20]{PeyreBeyond}).
By the general theory due to Colliot-Th\'el\`ene--Sansuc \cite[Théorème 2.7.3]{CT-Sansuc}, there exists a finite family of \emph{versal torsors} \begin{equation}\label{eq:pii}
	(\pi_i:\CTUT_i\to V)_{i\in I}
\end{equation} under the N\'eron-Severi torus $\CTNS$ (i.e., $k$-forms of universal torsors) such that $\CTUT_i(k)\neq\varnothing$ and 
$$V(k)=\bigsqcup_{i\in I} \pi_i(\CTUT_i(k)).$$
Write $$d:=\dim V,\quad r:=\dim\CTNS,\quad n:=\dim\CTUT_i,$$ so that $n=r+d$. 
\begin{hypothesis}[cf. \cite{Peyre2} \S4.3 \S4.4, \cite{PeyreBeyond} p. 245--p. 252]\label{hyp:univtor}
Let $V$ be as in Setting \ref{hyp:almost-Fano}. There exists $S$ a finite set of places containing $\infty_k$ such that, for every $i\in I$:
	\begin{itemize}
		\item The map $\pi_i$ extends to an $\CO_S$-torsor $\widetilde{\pi}_i:\FTUT_i\to\CV$ under $\FTNS$, where $\FTUT_i,\CV,\FTNS$ are smooth integral models of $\CTUT_i,V,\CTNS$ respectively over $\CO_S$ with $\CV$ proper, such that the map $\widetilde{\pi}_{i,\nu}:\FTUT_i(\CO_\nu)\to\CV(\CO_{\nu})=V(k_\nu)$ is surjective for all $\nu\notin S$;
		\item There exists a family of local measures $(\omega_\nu^{\CTUT_i})_{\nu\notin S}$ so that the measures $\omega^V_\nu,\omega_\nu^{\CTUT_i}$ ``patch together'' for all $\nu\notin S$. Namely, for every open-closed $\CE_\nu\subset \CV(\CO_\nu)$, \begin{equation}\label{eq:patch}
		\omega_\nu^{\CTUT_i}(\widetilde{\pi}_{i}^{-1}(\CE_\nu))=L_\nu(1,\Pic(\overline{V}))^{-1}\omega_\nu^V(\CE_\nu);
		\end{equation}
		\item There exists a fundamental domain $\varDelta_{i,S}\subset \CTUT_i(k_S)$
	 under the action of $\FTNS(\CO_S)$ modulo $W(\CTNS)$ on $\CTUT_i(k_S)$, where  $W(\CTNS)$ is the torsion subgroup of $\CTNS(k)$.
	\end{itemize}
\end{hypothesis}

\begin{remarks}
	\hfill
	\begin{enumerate}
		\item 	If $\CTNS$ is split (i.e. $\CTNS\simeq_k \mathbb{G}_{\operatorname{m},k}^{r}$), then $\# I =(\#\operatorname{Pic}(\CO_S))^r$ (cf. \cite[Theorem 2.7]{Frei-Pieropan}).
		\item  If $\FTNS(\CO_S)=W(\CTNS)$ (e.g. when $S=\infty_k$ and $k=\BQ$ or an imaginary quadratic field), then we have $\varDelta_{i,S}=\CTUT_i(k_S)$.
	\end{enumerate}
	Smooth proper split toric varieties over $\BQ$ studied in this article satisfy both conditions above. 
\end{remarks}

We define the measure
\begin{equation}\label{eq:measS}
\widehat{\omega}_S^{\CTUT_i}:=d_k^{-\frac{n}{2}}\bigotimes_{\nu\notin S}\omega_\nu^{\CTUT_i}\text{ on } \FTUT_i(\widehat{\CO}_S).
\end{equation} In view of \eqref{eq:patch}, the absolute convergence of the product measure $\widehat{\omega}_S^{\CTUT_i}$ follows from that of $\omega^V$ \eqref{eq:Tamagawameas}.
Consider the natural maps induced by \eqref{eq:pii} $$\pi_{i,S}:\CTUT_i(k_S)\to V(k_S),\quad \widehat{\pi}_{i,S}:\FTUT_i(\widehat{\CO}_S)\to \CV(\widehat{\CO}_S).$$ Let $\widetilde{\omega}_{i,S}^V$ be the restriction of $\widetilde{\omega}_S^V$ \eqref{eq:omegaSV} with support in $\pi_{i,S}\left(\CTUT_i(k_S)\right)$, which is an open-closed subset of $V(k_S)$ (cf. \cite[Proposition 5.20]{Salberger}).  Let $\tau(\CTNS)$ be the \emph{Tamagawa number} of $\CTNS$ (cf. \cite[D\'efinition 3.1.5]{Peyre2}, \cite[p. 251]{PeyreBeyond}).
\begin{hypothesis1}\label{hyp:Tamamagauniv}
	For every $i\in I$, there exists $(\Theta_i(B))_B$ a family of counting measures on $V(k_S)\times\FTUT_i(\widehat{\CO}_S)$ which converges weakly to  the measure \begin{equation}\label{eq:Tamagamameaunivtor}
		\frac{\tau(\CTNS)\# W(\CTNS)}{\beta(V)}\widetilde{\omega}_{i,S}^V\otimes \widehat{\omega}_S^{\CTUT_i}.
	\end{equation}
\end{hypothesis1}
\begin{theorem}[cf. \cite{Salberger} Theorem 6.19, \cite{Peyre2} Théorème 5.3.1, \cite{PeyreBeyond} Theorem 4.33]\label{thm:equidistunivtor}
	Under Setting \ref{hyp:univtor} and Hypothesis \ref{hyp:Tamamagauniv}, $V$ satisfies Hypothesis \ref{hyp:deltaB} with respect the family of counting measures $(\Delta(B))_B$ defined by \begin{equation}\label{eq:DeltaTheta}
		\Delta(B):=\sum_{i\in I}(\operatorname{Id}_{V(k_S)}\times\widehat{\pi}_{i,S})_{*}\Theta_i(B).
	\end{equation}
\end{theorem}
\begin{remark}
	Let $H:V(k)\to\BR$ be the associated anticanonical height function, and $M$ be a fixed thin set. Let
	\begin{equation}\label{eq:Thetai}
		\Theta_i(B):=\frac{1}{\alpha(V)\beta(V)B(\log B)^{r-1}}\sum_{\substack{y\in \CTUT_i(k)\cap (\varDelta_{i,S}\times\FTUT_i(\widehat{\CO}_S))\\\pi_i(y)\notin M, H(\pi_i(y))\leqslant B}}\delta_{\pi_{i,S}((y)_{\nu\in S})\times (y)_{\nu\notin S}}
	\end{equation} on $V(k_S)\times\FTUT_i(\widehat{\CO}_S)$. 
	Theorem \ref{thm:equidistunivtor} shows that if $(\Theta_i(B))_B$ converges weakly to \eqref{eq:Tamagamameaunivtor} for all $i\in I$, then $V$ satisfies Principle \ref{prin:Manin-Peyre}.
\end{remark}
\begin{proof}[Proof of Theorem \ref{thm:equidistunivtor}]
	 Indeed, for every $\widetilde{\omega}_S^V$-continuous $\CE_1\subset V(k_S)$ and for every open-closed $\CE_2\subset \CV(\widehat{\CO}_S)$ such that $\CE:=\CE_1\times\CE_2\subset V(\RA_k)^{\operatorname{Br}}$, let the counting function  $\CN_{V}(\CE;B)$ be defined by \eqref{eq:countingDelta} regarding \eqref{eq:DeltaTheta}. For every $i\in I$ and for every open-closed $\CF_i\subset \FTUT_i(\widehat{\CO}_S)$, consider the counting function  \begin{equation}\label{eq:UTcounting}
	\CN_{\FTUT_i}(\CE_1,\CF_i;B):=\int_{\CE\times\CF_i} \operatorname{d}\Theta_i(B).
	\end{equation}
	It follows from Hypothesis \ref{hyp:Tamamagauniv} and the properties in  Setting \ref{hyp:univtor} that 
	\begin{align*}
		\CN_{V}(\CE;B)&=\frac{1}{\# W(\CTNS)}\sum_{i\in I}\CN_{\FTUT_i}(\CE_1,\widehat{\pi}_{i,S}^{-1}(\CE_2);B)\sim \frac{\tau(\CTNS)}{\beta(V)}\sum_{i\in I}\widetilde{\omega}_{i,S}^V\left(\CE_1\right) \widehat{\omega}_S^{\CTUT_i}(\widehat{\pi}_{i,S}^{-1}(\CE_2)).	\end{align*} 
	To complete the proof it remains to show that
	\begin{equation}\label{eq:Ono0}
		\frac{\tau(\CTNS)}{\beta(V)}\sum_{i\in I}\widetilde{\omega}_{i,S}^V\left(\CE_1\right) \widehat{\omega}_S^{\CTUT_i}(\widehat{\pi}_{i,S}^{-1}(\CE_2))=\omega^V(\CE).
	\end{equation}
	
 By a theorem of Salberger \cite[Lemma 6.17]{Salberger}, we have for any $x\in V(\RA_k)^{\operatorname{Br}}$, 
	$$\#\Sha^1(k,\CTNS)=\#\{i\in I:x\in \pi_i(\CTUT_i(\RA_k))\},$$ 	 where
	$$\Sha^1(k,\CTNS):=\ker\left(H^1(k,\CTNS)\to \prod_{\nu\in\mathfrak{M}(k)}H^1(k_\nu,\CTNS)\right).$$ We then have, on recalling \eqref{eq:measS},
	\begin{equation}\label{eq:Ono2}
		\begin{split}
				\sum_{i\in I}\widetilde{\omega}_S^V\left(\CE_1\cap\pi_{i,S}\left(\CTUT_i(k_S)\right)\right)&=d_k^{-\frac{n}{2}}\left(\bigotimes_{\nu\notin S}L_\nu(1,\Pic(\overline{V}))^{-1}\omega_\nu^V\right)(\CE_2)\sum_{i\in I}\widetilde{\omega}_S^V\left(\CE_1\cap\pi_{i,S}\left(\CTUT_i(k_S)\right)\right)\\&=\frac{\#\Sha^1(k,\CTNS)}{d_k^{\frac{n}{2}}}\widetilde{\omega}_S^V(\CE_1).
		\end{split}
	\end{equation}
	By Ono's theorem \cite[Main Theorem]{Ono}, we have
\begin{equation}\label{eq:Ono1}
	\tau(\CTNS)=\frac{d_k^{\frac{r}{2}}\beta(V)}{\# \Sha^1(k,\CTNS)}\left(\lim_{s\to 1}(s-1)^rL_S(s,\Pic(\overline{V}))\right).
\end{equation}
	Comparing \eqref{eq:Ono2} \eqref{eq:Ono1} with \eqref{eq:Tamagawameas}, we finally deduce \eqref{eq:Ono0}.
Theorem  \ref{thm:portmanteau} then allows to conclude the proof.
\end{proof}

\subsection{The effective equidistribution condition on universal torsors (EEUT)}\label{se:EEUT}
In this subsection, we continue to work under Setting \ref{hyp:univtor}.
Let $\mathfrak{F}$ be a family of $\widetilde{\omega}_S^V$-continuous sets contained in $\operatorname{pr}_S(V(\RA_k)^{\operatorname{Br}})$. 
Our goal now is to formulate a quantitative version of Hypothesis \ref{hyp:Tamamagauniv} for $\mathfrak{F}$ and for the family of versal torsors $(\FTUT_i)_{i\in I}$. We recall the counting function \eqref{eq:UTcounting}.
\begin{condEEUT}
There exist $\gamma_0\geqslant 0$ and a continuous function $h_0:\BR_{>0}\to\BR_{>0}$ with $h_0(x)=o(1),x\to\infty$, satisfying the following property:

		Let
		$\CF\in\mathfrak{F}$. For every $i\in I$, let $\CE^{\FTUT_i}(\mathfrak{l};\Xi)\subset \FTUT_i(\widehat{\CO}_S)$ be a congruence neighbourhood associated to  $\FTUT_i,\Xi\in\prod_{\nu\mid\mathfrak{l}}\FTUT_i(\CO_\nu)$ of level $\mathfrak{l}$. Then we have, as $B\to\infty$,
		\begin{equation*}
			\CN_{\FTUT_i}(\CF,\CE^{\FTUT_i}(\mathfrak{l};\Xi);B)=
			 \frac{\tau(\CTNS)\# W(\CTNS)}{\beta(V)}\widetilde{\omega}_{i,S}^V(\CF)\widehat{\omega}_S^{\CTUT_i}(\CE^{\FTUT_i}(\mathfrak{l};\Xi))+O_{\CF}((\#\CO_S/\mathfrak{l})^{\gamma_0} h_0(B)),	
		\end{equation*}
		uniformly for every such $\mathfrak{l},\Xi$, where the implied constant depends on $\CF$.
\end{condEEUT}
\begin{proposition}\label{prop:univtorEEGS}
Condition \textbf{(EEUT)} for $((\FTUT_i)_{i\in I},\mathfrak{F})$  implies condition \textbf{(EE)} for $(\CV,\mathfrak{F})$ (regarding \eqref{eq:DeltaTheta})
 with $$\gamma=\gamma_0+n+\varepsilon\quad\text{and}\quad h=h_0,$$
where we recall $n=\dim \CTUT_i$ and $\varepsilon>0$ is arbitrary. 
\end{proposition}
\begin{proof}
We fix throughout $\CF:=\CF_0\times \CE^\CV(\mathfrak{l};\bxi)\subset V(\RA_k)^{\operatorname{Br}}$, where $\CF_0\in\mathfrak{F}$ and $\CE^\CV(\mathfrak{l};\bxi)\subset \CV(\widehat{\CO}_S)$ is a congruence neighbourhood  associated to  $\CV,\bxi=(\xi_\nu)_{\nu\mid \mathfrak{l}}\in\prod_{\nu\mid \mathfrak{l}} \CV(\CO_\nu)$ of level $\mathfrak{l}$.
Now we fix $i\in I$. Then for every $\Xi_i\in \prod_{\nu\mid\mathfrak{l}}\widetilde{\pi}_{i,\nu}^{-1}(\xi_\nu)\subset\prod_{\nu\mid \mathfrak{l}}\FTUT_i(\CO_\nu)$, we have $\widehat{\pi}_{i,S}(\CE^{\FTUT_i}(\mathfrak{l};\Xi_i))\subset \CE^\CV(\mathfrak{l};\bxi)$. 
	Consequently, we can find finitely many such $\Xi_{i,k}\in\prod_{\nu\mid\mathfrak{l}}\FTUT_i(\CO_\nu)$ so that 
	$$\widehat{\pi}_{i,S}^{-1}(\CE^\CV(\mathfrak{l};\bxi))=\bigsqcup_{\Xi_{i,k}}\CE^{\FTUT_i}(\mathfrak{l};\Xi_{i,k}).$$ The number of such $\Xi_{i,k}$ is, by the Lang-Weil estimate \cite{Lang-Weil} and Hensel's lemma as in the proof of Proposition \ref{prop:EEimpliesgeneral}, $$\leqslant\#\FTUT_i(\CO_S/\mathfrak{l}) =\prod_{\nu\mid \mathfrak{l}}\#\FTUT_i(\CO_\nu/\mathfrak{m}_\nu^{\operatorname{ord}_\nu(\mathfrak{l})})\ll_\varepsilon(\#\CO_S/\mathfrak{l})^{n+\varepsilon}.$$
	It follows from condition \textbf{(EEUT)} and \eqref{eq:Ono0} in the proof of Theorem \ref{thm:equidistunivtor} that
	\begin{align*}
		\CN_V(\CF;B)=&\sum_{i\in I}\sum_{\Xi_{i,k}}\frac{1}{\#W(\CTNS)}\CN_{\FTUT_i}(\CF_0,\CE^{\FTUT_i}(\mathfrak{l};\Xi_{i,k});B)\\ =&\frac{\tau(\CTNS)}{\beta(V)}
		\sum_{i\in I}\sum_{\Xi_{i,k}}\left(\widetilde{\omega}_{i,S}^V\left(\CF_0\right)\widehat{\omega}_S^{\CTUT_i}(\CE^{\FTUT_i}(\mathfrak{l};\Xi_{i,k}))+O_{\CF_0}((\#\CO_S/\mathfrak{l})^{\gamma_0} h_0(B))\right)\\ =&\frac{\tau(\CTNS)}{\beta(V)}\sum_{i\in I}\widetilde{\omega}_{i,S}^V\left(\CF_0\right)\sum_{\Xi_{i,k}} \widehat{\omega}_S^{\CTUT_i}(\CE^{\FTUT_i}(\mathfrak{l};\Xi_{i,k}))+O_{\CF_0,\varepsilon}((\#\CO_S/\mathfrak{l})^{\gamma_0+n+\varepsilon} h_0(B))\\ =&\frac{\tau(\CTNS)}{\beta(V)}\sum_{i\in I}\widetilde{\omega}_{i,S}^V\left(\CF_0\right)\widehat{\omega}_S^{\CTUT_i}(\widehat{\pi}_{i,S}^{-1}(\CE^\CV(\mathfrak{l};\bxi)))+O_{\CF_0,\varepsilon}((\#\CO_S/\mathfrak{l})^{\gamma_0+n+\varepsilon} h_0(B))\\ =&\omega^V(\CF)+O_{\CF_0,\varepsilon}((\#\CO_S/\mathfrak{l})^{\gamma_0+n+\varepsilon} h_0(B)). \qedhere
	\end{align*}
\end{proof}

\section{Parametrisation and heights on toric varieties}\label{se:toricparaheight}
In this section, we recall basic toric geometry, toric construction of universal torsors, toric Tamagawa measures and formulas for toric height functions. 
\subsection{Construction and lifting}
(See \cite[\S8]{Salberger}, \cite[\S2]{Huang}.)

Fix an integral lattice $N\simeq \BZ^d$. Let $N^\vee:=\operatorname{Hom}_\BZ(N,\BZ)$ be the dual lattice.  A fan $\triangle$ that we consider in this article is a collection of (strongly convex, rational polyhedral and simplicial) cones inside $N_\BR\simeq \BR^d$ containing the vector $\mathbf{0}$ and satisfying the following.
\begin{itemize}
	\item Any face of a cone is also a cone in $\triangle$;
	\item The intersection of any two cones in $\triangle$ is the common face of them;
\end{itemize}
We moreover assume that $\triangle$ is \emph{regular} and \emph{complete}, i.e.
\begin{itemize}
	\item Every cone in $\triangle$ is generated by integral vectors which form part of a basis of the lattice $N$;
	\item The support $\bigcup_{\sigma\in\triangle}\sigma$ is $N_\BR$.
\end{itemize}

Let $R$ be either $k$ or $\CO_k$ in what follows. For every $\sigma\in\triangle$, we consider  $\sigma^\vee\subset N^\vee_\BR$ its dual cone and associate the affine open neighbourhood $U_\sigma=\operatorname{Spec}(R[\sigma^\vee\cap N^\vee])$ attached to the semigroup $\sigma^\vee\cap N^\vee$. Clearly if $\tau$ is a face of $\sigma$, then $U_\tau\subset U_\sigma$, all of which contain the open orbit \begin{equation}\label{eq:openorbit}
	\CT_{O}:=U_{\mathbf{0}}=\operatorname{Spec}(R[N^\vee])\simeq \mathbb{G}_{\operatorname{m},R}^d.
\end{equation}  
The toric variety $\operatorname{Tor}_R(\triangle)$ attached to $\triangle$ is constructed by gluing the data $(U_\sigma)_{\sigma\in\triangle}$, which is a smooth, flat and proper scheme over $R$.

Let $\triangle_{\max}$ be the set of all \emph{maximal cones}, i.e. those whose $\BR$-span is of dimension $d$. Let $\triangle(1)\subset \triangle$ be the set of one-dimensional cones, which we shall call \emph{rays}. Each ray $\rho$ corresponds to a $\CT_{O}$-invariant boundary divisor $D_\rho$. We denote by $n_\rho$ the unique primitive element of $\rho\cap N$. Similarly, for every cone $\sigma\in\triangle$, let $\sigma(1)$ be the set of rays of $\sigma$. Consider the map $h:\BZ^{\triangle(1)}\to N$ given by for $(a_\rho)_{\rho\in\triangle(1)}\in \BZ^{\triangle(1)}$, 
$$h((a_\rho)_{\rho\in\triangle(1)}):=\sum_{\rho\in\triangle(1)}a_\rho \rho.$$ Then we have the following exact sequence of $\BZ$-modules  \begin{equation}\label{eq:exactsq}
	\xymatrix{
	0\ar[r]&\ker(h)\ar[r]& \BZ^{\triangle(1)}\ar[r]^-h&
	N\ar[r]& 0.}
\end{equation} We write $r=\operatorname{rank}\ker(h)$, so that $\#\triangle(1)=n:=r+d$, and write $\triangle(1)=\{\rho_1,\cdots,\rho_{r+d}\}$. For every $\sigma\in\triangle_{\max}$, by convention, we call an ordering of rays  \emph{admissible} with respect to $\sigma$ if $\sigma(1)=\{\rho_{r+j},1\leqslant j\leqslant d\}$. 
We can identify $\ker(h)$ as the dual of the Picard group (cf. \cite[Theorem 4.1.3]{Coxbook}). 
The exact sequence \eqref{eq:exactsq} induces the following exact sequence 
\begin{equation}\label{eq:toriexactseq}
	\xymatrix{ 1\ar[r]& \CTNS\ar[r]& \BG_{\operatorname{m},R}^{\triangle(1)}\ar[r]&\CT_{O}\ar[r]& 1}\index{TNS@$\mathcal{T}_{\operatorname{NS}},\widetilde{\mathcal{T}}_{\operatorname{NS}}$}
\end{equation} between split $R$-tori.

To every $\sigma\in\triangle$, we associate the cone $\sigma_0:=h_{\BR}^{-1}(\sigma)\subset N_{0,\BR}$. It is precisely the cone generated by the unit vectors in $\BR^{\triangle(1)}$ corresponding to the elements of $\sigma(1)$, and it defines an affine scheme $U_{0,\sigma}=\operatorname{Spec}(R[\sigma_0^\vee\cap \BZ^{\triangle(1)}])$. The collection of cones $(\sigma_0)_{\sigma\in\triangle}$ inside $\BR^{\triangle(1)}$ forms a fan $\triangle_0$, to which we associate the toric $R$-scheme $\operatorname{Tor}_R(\triangle_0)$. This is an open toric subscheme of $\BA^{\triangle(1)}_R\simeq \operatorname{Spec}(R[\BZ^{\triangle(1)}])$. 
We write here and after $$X:=\operatorname{Tor}_k(\triangle),\quad X_0:=\operatorname{Tor}_k(\triangle),\quad  \CX:=\operatorname{Tor}_{\CO_k}(\triangle),\quad \CX_0:=\operatorname{Tor}_{\CO_k}(\triangle_0),$$
Then $X/k$ is smooth proper split with the smooth proper integral model $\CX/\CO_k$, and $X_0/k$ is an open subset of $\BA^{\triangle(1)}_k$ and hence quasi-affine, with the quasi-affine integral model $\CX_0/\CO_k$.
The morphisms $U_{0,\sigma}\to U_\sigma,\sigma\in\triangle$ induced by the map $h$ glue together to the toric morphism \begin{equation}\label{eq:morptoric}
	\pi:X_0\to X. 
\end{equation}

\begin{theorem}[Colliot-Thélène--Sansuc \& Salberger, cf. \cite{Salberger} Propositions 8.4, 8.5, Corollary 8.8]\label{thm:univtor}
The quasi-affine variety $X_0$ is a universal torsor over $X$ under the	N\'eron-Severi torus $\mathcal{T}_{\operatorname{NS}}$, which is unique up to $k$-isomorphism.
\end{theorem}
Following Salberger \cite[p. 191]{Salberger}, we call $X_0$ the \emph{principal universal torsor}.

According to \cite[Theorem 3.2]{Huang} and \cite[p. 419]{Pieropan}, there exists a finite family of ``twisted'' universal torsors 
$(\pi_{\mathbf{c}}:\CX_0^{\mathbf{c}}\to\CX)_{\mathbf{c}\in\mathfrak{c}_k^r}$ of $\CX_0$ over $\CO_k$, such that $$X(k)=\bigsqcup_{\mathbf{c}\in\mathfrak{c}_k^r}\pi_{\mathbf{c}}\left(\CX_0^{\mathbf{c}}(\CO_k)\right).$$ The coprimality condition for the $(r+d)$-tuples in $\CX_0^{\mathbf{c}}(\CO_k)$ is explicitly described by \cite[Theorem 2.7]{Frei-Pieropan} and \cite[(26)]{Huang}.
\subsection{Toric norms and toric Tamagawa measures}
(See \cite[\S9]{Salberger}.)

The corresponding divisor $D_0$ of the anti-canonical line bundle $K_X^{-1}$ satisfies (cf. \cite[Theorem 8.2.3]{Coxbook}) $$D_0=\sum_{\rho\in\triangle(1)}D_{\rho}.$$ 
From now on we suppose that $K_X^{-1}$ is globally generated. 
For every $\sigma\in\triangle_{\max}$, let $m_{D_0}(\sigma)\in M$ be the unique element characterised by the property that $\langle m_{D_0}(\sigma),n_\rho\rangle=1$ for every $\rho\in\sigma(1)$.\footnote{This notation adopted here follows \cite{Salberger}. It differs from \cite{Huang} by a minus sign.} 
For every $\sigma\in\triangle_{\max}$, let \begin{equation}\label{eq:D0sigma}
	D_0(\sigma):=D_0+\sum_{\rho\in\triangle(1)}\langle -m_{D_0}(\sigma),n_{\rho}\rangle D_{\rho}.
\end{equation} Since $D_0$ is globally generated, $D_0(\sigma)$ is an effective divisor with support in $\cup_{\rho\in\triangle(1)\setminus\sigma(1)} D_\rho$ for every $\sigma\in\triangle_{\max}$.

For every $(r+d)$-tuple $\XX=(X_\rho)\in k^{\triangle(1)}$, and for every $D=\sum_{\rho\in\triangle(1)}a_\rho D_\rho$, such that $a_\rho=0\Rightarrow X_\rho\neq 0$, we shall use the notation
\begin{equation}\label{eq:XP0}
	\XX^{D}:=\prod_{\rho\in\triangle(1)}X_\rho^{a_{\rho}}.
\end{equation}

For every $\sigma\in\triangle_{\max}$, let us choose an admissible ordering such that $\sigma(1)=\{\rho_{r+1},\cdots,\rho_{r+d}\}$, and let $\{n_{\rho_{r+1}}^{\vee},\cdots,n_{\rho_{r+d}}^{\vee}\}$ be the corresponding dual base. Let \begin{equation}\label{eq:Fj}
	F_\sigma(j):=\sum_{\rho\in\triangle(1)}\langle n_{\rho_{r+j}}^\vee,n_{\rho}\rangle D_\rho=D_{\rho_{r+j}}+\sum_{i=1}^{r}\langle n_{\rho_{r+j}}^\vee,n_{\rho_i}\rangle D_{\rho_i},\quad 1\leqslant j\leqslant d.
\end{equation}
According to the exact sequence \eqref{eq:exactsq}, the restriction of $\pi:X_0\to X$ to $U_{0,\sigma}\to U_\sigma$ can be written in the coordinate form
\begin{equation}\label{eq:univmap}
	\begin{split}
		\XX=(X_\rho)_{\rho\in\triangle(1)}&\longmapsto  \left(z_{r+j}:=\XX^{F_\sigma(j)}\right)_{1\leqslant j\leqslant d},
	\end{split}
\end{equation} which we shall call \emph{the parametrisation given by} (the admissible ordering of) $\sigma$. This is clearly well-defined since $\XX\in U_{0,\sigma}\Rightarrow X_{\rho_i}\neq 0$ for all $1\leqslant i\leqslant r$.

For every $\nu\in\mathfrak{M}(k)$, let us consider the group homomorphism \cite[p. 196]{Salberger} $L_\nu:\CT_{O}(k_\nu)\to N_\BR$ given by regarding $\CT_{O}(k_\nu)=\Hom_\BZ(M,k_\nu^\times),N_\BR=\Hom_\BZ(M,\BR)$ and composing with the map $\log (|\cdot|_\nu):k_\nu^\times\to\BR$. For every  $\sigma\in\triangle_{\max}$, let $C_\sigma(k_\nu)$ be the $\nu$-adic closure of $L_\nu^{-1}(-\sigma)$ in $X(k_\nu)$. Since the fan $\triangle$ is complete, the $\nu$-adic locus $X(k_\nu)$ has the subdivision (cf. \cite[Proposition 9.2]{Salberger})
\begin{equation}\label{eq:divisionRpoints}
	X(k_\nu)=\bigcup_{\sigma\in\triangle_{\max}} C_\sigma(k_\nu).
\end{equation}
Under the parametrisation given by $\sigma$ \eqref{eq:univmap}, if $\mathbf{z}=(z_{r+1},\cdots,z_{r+d})\in \CT_{O}(k_\nu)$, then $$L_{\nu}(\mathbf{z})=\sum_{j=1}^{d}\log |z_{r+j}|_{\nu}n_{\rho_{r+j}}\in -\sigma\Longleftrightarrow \log |z_{r+j}|_{\nu}<0 \text{ for all }j.$$
So  \begin{equation}\label{eq:CsigmaR}
	C_{\sigma}(k_\nu)=\{(z_{r+1},\cdots,z_{r+d})\in k_\nu^d:|z_{r+j}|_\nu\leqslant 1 \text{ for all }j\}.
\end{equation}

Let us now recall the definition of \emph{toric norm} $(\|\cdot\|_{\operatorname{tor},\nu})_{\nu\in\mathfrak{M}(k)}$ on $D_0$.
For every $\nu\in\mathfrak{M}(k)$, and for every $P\in X(k_\nu)$, we choose $\sigma\in\triangle_{\max}$ such that $P\in C_\sigma(k_\nu)$. The associated character $\chi^{m_{D_0}(\sigma)}\in\Hom(\CT_{O},\BG_{\operatorname{m}})$ defines $D_0$ on $U_\sigma$ and lifts to a global section of $K_X^{-1}$. So on $U_\sigma$, $K_X^{-1}$ trivialises and is generated by the section $\chi^{-m_{D_0}(\sigma)}$, and $\chi^{-m_{D_0}(\sigma)}(P)\neq 0$ for every $P\in U_\sigma(k_\nu)$. Now for every local section $s$ of $D_0$ defined at $P$ such that $s(P)\neq 0$, we set (cf. \cite[Proposition 9.2]{Salberger})
\begin{equation}\label{eq:toricnorm}
	\|s(P)\|_{\operatorname{tor},\nu}:=\left|\frac{s}{\chi^{-m_{D_0}(\sigma)}}(P)\right|_\nu.
\end{equation}
It is independent of the cone $\sigma\in\triangle_{\max}$ with $P\in C_\sigma(k_\nu)$.
We write $(\omega^X_{\operatorname{tor},\nu})$ for the associated adelic measures, which we use to define the Tamagawa measure $\omega^X_{\operatorname{tor}}$ (resp. $\omega^X_{\operatorname{tor},f}$) on $X(\RA_k)$ (resp. $X(\RA_k^{\infty_k})$) by \eqref{eq:Tamagawameas}.

\begin{theorem}[cf. \cite{Salberger} Propositions 9.12 \& 9.16]\label{thm:realTamagawameas}
	\hfill	\begin{itemize}
		\item 	Let $\nu\in\mathfrak{M}(k)$ be an archimedean place. Then  under the parametrisation given by any $\sigma\in\triangle_{\max}$ \eqref{eq:univmap}, the adelic measure $\omega^X_{\operatorname{tor},\nu}$ associated to the toric norm $\|\cdot\|_{\operatorname{tor},\nu}$ restricted to $C_\sigma(k_\nu)$ equals the Lebesgue measure $\operatorname{d}z_{r+1}\cdots\operatorname{d}z_{r+d}$;
		\item 	Let $\nu\in\mathfrak{M}(k)$ be a non-archimedean place. Then the toric norm $\|\cdot\|_{\operatorname{tor},\nu}$ \eqref{eq:toricnorm} on $K_X^{-1}$ (resp. the adelic measure $\omega^X_{\operatorname{tor},\nu}$) coincides with the model norm (resp. the model measure) determined by $(\CX,K_{\CX}^{-1})$.
	\end{itemize}
\end{theorem}

We define the toric norm $(\|\cdot\|_{X_0,\nu})_{\nu\in\mathfrak{M}(k)}$ on $K_{X_0}^{-1}$ as follows. For every $\nu\in\mathfrak{M}(k)$, and for every $\XX\in X_0(k_\nu)$ and every local section $s:=f(x_1,\cdots,x_n) \frac{\partial}{\partial x_{1}}\wedge\cdots\wedge\frac{\partial}{\partial x_{n}}$ defined at $\XX$ with $f$ measurable,  (cf. \cite[Theorem 9.12]{Salberger}) 
\begin{equation}\label{eq:normX0}
	\|s(\XX)\|_{X_0,\nu}:=\frac{|f(\XX)|_\nu}{\sup_{\sigma\in\triangle_{\max}}|\XX^{D_0(\sigma)}|_\nu}.
\end{equation}
We let $(\omega^{X_0}_{\nu})_{\nu\in\mathfrak{M}(k)}$ be the family of associated adelic measures.
\begin{theorem}[cf. \cite{Salberger} Proposition 9.14]\label{thm:nonarchTmeas}
	Let $\nu\in\mathfrak{M}(k)$ be a non-archimedean place. Then the toric norm $\|\cdot\|_{X_0,\nu}$ on $K_{X_0}^{-1}$ (resp. the adelic measure $\omega^{X_0}_{\nu}$) coincides with the model norm (resp. the model measure) determined by $(\CX_0,K^{-1}_{\CX_0})$.
\end{theorem}
Define the product measure (cf. \eqref{eq:measS}) \begin{equation}\label{eq:omegaXf}
	\widehat{\omega}_{\infty_k}^{X_0}:=d_k^{-n/2}\bigotimes_{\nu\notin \infty_k}\omega_\nu^{X_0}\text{ on }\CX_0(\widehat{\CO}_k).
\end{equation}

\subsection{Toric height functions}
Let $H_{\operatorname{tor}}=H_{K_X^{-1},(\|\cdot\|_{\operatorname{tor},\nu})}:X(k)\to\BR_{>0}$ be the \emph{toric height function} induced by the toric norm $(\|\cdot\|_{\operatorname{tor},\nu})_{\nu\in\mathfrak{M}(k)}$, i.e., for every $P\in X(k)$ (recall \eqref{eq:heightmetric}), 
\begin{equation}\label{eq:toricheight}
	H_{\operatorname{tor}}(P)=\prod_{\nu\in\mathfrak{M}(k)}\|s(P)\|^{-1}_{\operatorname{tor},\nu},
\end{equation} for any local section $s$ of $K_X^{-1}$ at $P$ with $s(P)\neq 0$. 

\begin{theorem}[cf. \cite{Salberger} Proposition 11.3, \cite{Pieropan} Proposition 2, \cite{Huang} Proposition 3.4]\label{thm:H0}
For every $P\in X(k)$ that lifts to $\XX(P)\in\CX_0^{\mathbf{c}}(\CO_k)$ for a certain $\mathbf{c}\in\mathfrak{c}_k^r$, we have	$$H_{\operatorname{tor}}(P)=\mathbf{N}(\mathbf{c}^{D_0(\sigma_0)})^{-1}\prod_{\nu\in\mathfrak{M}(k)}\sup_{\sigma\in\triangle_{\max}}|\XX(P)^{D_0(\sigma)}|_\nu,$$ for any $\sigma_0\in\triangle_{\max}$.
\end{theorem}

\begin{remark}
	Assume $k=\BQ$. Then $X(\BQ)=\pi(\CX_0(\BZ))$ and $\CX_0(\BZ)$ consists of precisely the $(r+d)$-tuples $(X_\rho)_{\rho\in \triangle(1)}\subset\BZ^{\triangle(1)}$ satisfying the coprimality condition \begin{equation}\label{eq:coprime}
		\gcd_{\sigma\in\triangle_{\max}}\left(\prod_{\rho\notin\sigma(1)}X_\rho\right)=1.
	\end{equation} Every $P\in X(\BQ)$ lifts to $\XX(P)\in \CX_0(\BZ)$, lifts differing by the action of $\FTNS(\BZ)$. The  coprimality condition \eqref{eq:coprime} implies $\gcd\left(\XX(P)^{D_0(\sigma)},\sigma\in\triangle_{\max}\right)=1$ since $\XX^{D_0(\sigma)}$ is a polynomial in $X_\rho,\rho\notin\sigma(1)$ for all $\sigma\in\triangle_{\max}$. Hence by Theorem \ref{thm:H0} the toric height function \eqref{eq:toricheight} has the simple expression (cf. \cite[Proposition 11.3]{Salberger})
$$H_{\operatorname{tor}}(P)=\sup_{\sigma\in\triangle_{\max}}|\XX(P)^{D_0(\sigma)}|,$$ where $|\cdot|$ is the usual absolute value on $\BR$.
\end{remark}

\section{Effective equidistribution of rational points on toric varieties}\label{se:purityproof}
Throughout  this section, we fix $X$ a smooth proper split toric variety over $\BQ$ with its principal universal torsor $X_0$, and we write respectively $\CX_0,\CX$ their canonical integral models over $\BZ$. We continue to assume that the anticanonical line bundle $K_X^{-1}$ is globally generated. Recall the morphism \eqref{eq:morptoric} between them. 

 Recall from \eqref{eq:divisionRpoints} that we have the subdivision $$X(\BR)=\bigcup_{\sigma\in\triangle_{\max}}C_\sigma(\BR)$$ of the real locus. For every $\sigma\in\triangle_{\max}$ with an admissible ordering, recall from \eqref{eq:CsigmaR} that
\begin{equation*}
	C_\sigma(\BR)=\{(z_{r+1},\cdots,z_{r+d})\in\BR^d:|z_{r+j}|\leqslant 1\}\subset X(\BR),
\end{equation*} where the coordinates $(z_{r+j})_{1\leqslant j\leqslant d}$ correspond to the parametrisation given by $\sigma$ \eqref{eq:univmap}. 
 For $0<a_j<b_j\leqslant 1,1\leqslant j\leqslant d$, let us consider  \begin{equation}\label{eq:standardcube}
	\CF_\infty^+(\sigma)=\prod_{j=1}^{d}]a_j,b_j]\subset C_{\sigma}(\BR),
\end{equation} which we call a \emph{standard cube} (with respect to $\sigma\in\triangle_{\max}$).
 The group $\CT_{O}(\BR)/\CT_{O}(\BR)^+\simeq\{\pm 1\}^d$ acts via switching the sign of coordinates. Let \begin{equation}\label{eq:familyF}
 	\mathfrak{F}:=\{(\CT_{O}(\BR)/\CT_{O}(\BR)^+)\cdot\CF_\infty^+(\sigma):\CF_\infty^+(\sigma) \text{ a standard cube}, \sigma\in\triangle_{\max}\}.
 \end{equation} It is clear that $\mathfrak{F}$ forms a topological basis for the real open orbit $\CT_{O}(\BR)$ \eqref{eq:openorbit}.

This section is devoted to the proof of the following theorem. 
\begin{theorem}\label{thm:effectiveEEUT}
	Condition \textbf{(EEUT)} holds for $(\CX_0,\mathfrak{F})$ with $$\gamma_0=\varepsilon,\quad  h_0(B):=(\log B)^{-\frac{1}{2}+\varepsilon},$$ for any $\varepsilon>0$.
\end{theorem}
\begin{corollary}\label{thm:effectiveEE}
Condition \textbf{(EE)} holds for $(\CX,\mathfrak{F})$ with $$\gamma=\dim X+\operatorname{rank}\Pic(X)+\varepsilon,\quad h(B)=(\log B)^{-\frac{1}{2}+\varepsilon},$$ for any $\varepsilon>0$.
\end{corollary}
\begin{proof}[Proof of Corollary \ref{thm:effectiveEE} assuming Theorem \ref{thm:effectiveEEUT}]
	It remains to apply Proposition \ref{prop:univtorEEGS}.
\end{proof}
\begin{proof}[Proof of Theorem \ref{thm:mainequidist} assuming Corollary \ref{thm:effectiveEE}]
	It remains to apply Proposition \ref{prop:EEimpliesgeneral}. 
\end{proof}
The proof of Theorem \ref{thm:effectiveEEUT} will be given at the end of this section.
\subsection{Counting integral points with congruence conditions}
We first state two results due to Salberger. We consider the set (recall the notation \eqref{eq:XP0})
\begin{equation}\label{eq:CAB}
	\CA(B):=\left\{\XX\in\BZ_{>0}^n:\max_{\sigma\in\triangle_{\max}}\XX^{D_0(\sigma)}\leqslant B\right\}. \footnote{Note that Salberger \cite[11.6]{Salberger} uses the notation $A(B)$ to denote the cardinality of this set.}
\end{equation}
Up to a product of local densities, $\CA(B)$ contributes to the constant  $\alpha(X)$ \eqref{eq:alphaV} and the order of growth $B(\log B)^{r-1}$.  
Following  \cite[Notation 11.6 (c)]{Salberger}, we consider the set
\begin{equation}\label{eq:CCB}
	\CC_0(B)^+:=\left\{\XX\in\BZ_{>0}^n:\max_{\sigma\in\triangle_{\max}}\XX^{D_0(\sigma)}\leqslant B,\eqref{eq:coprime}\text{ holds}\right\}.
\end{equation} The gcd condition gives rise to the non-archimedean local densities of $\CX_0$.  

Let the constant $\alpha_0$ denote the minimum of integers $\alpha\in\BZ_{>0}$ such that there exist a collection of $\alpha$ rays of $\triangle$ not contained in any cone of $\triangle$. Note that we always have $\alpha_0\geqslant 2$ since every ray is a cone.
\begin{lemma}[\cite{Salberger} ``Main Lemma'' 11.27]\label{thm:Salbergermainlemma} We have
$$\#\CC_0(B)^+=\kappa\#\CA(B)+O_\varepsilon(B(\log B)^{r-2+\frac{1}{\alpha_0}+\varepsilon}),$$ where $\kappa:=\prod_{p<\infty}\kappa_p$ and for every prime $p$, \begin{equation}\label{eq:kappa}
	\kappa_p:=\frac{\#\CX_0(\BZ/p\BZ)}{p^n}.
\end{equation} 
\end{lemma}
In fact, we have $\kappa=\widehat{\omega}_{\infty}^{X_0}(\CX_0(\widehat{\BZ}))$, where $\widehat{\omega}_{\infty}^{X_0}$ is the finite part of the Tamagawa measure \eqref{eq:omegaXf} on $\CX_0(\widehat{\BZ})$ by Lemma \ref{le:modelmeasurecomp} and Theorem \ref{thm:nonarchTmeas}.

Let
\begin{equation}\label{eq:CDB}
	\CD(B):=\left\{\YY\in\BR_{\geqslant 1}^n:\max_{\sigma\in\triangle_{\max}}\YY^{D_0(\sigma)}\leqslant B\right\},
\end{equation}
and \begin{equation}\label{eq:CIB}
	\CI(B):=\int_{\CD(B)}\left(\max_{\sigma\in\triangle_{\max}}\YY^{D_0(\sigma)}\right)\operatorname{d}\omega^{X_0}_{\infty}(\YY).
\end{equation}
On recalling \eqref{eq:normX0} (see also  \cite[Proof of Lemma 11.29]{Salberger}), by taking $f$ to be the characteristic function of the region $\CD(B)$, we have
$$\CI(B)=\int_{\CD(B)}\left(\max_{\sigma\in\triangle_{\max}}\xx^{D_0(\sigma)}\right)\left\|\frac{\partial}{\partial x_{1}}\wedge\cdots\wedge\frac{\partial}{\partial x_{n}}(\xx)\right\|_{X_0,\infty}\operatorname{d}\xx=\int_{\CD(B)}\operatorname{d}\xx,$$ where $\operatorname{d}\xx=\operatorname{d}x_1\cdots\operatorname{d}x_n$ is the usual $n$-dimensional Lebesgue measure on $\BR^n$.

\begin{lemma}[\cite{Salberger} Lemma 11.29]\label{le:exchangeABIB}
We have	$$\#\CA(B)=\CI(B)+O(B(\log B)^{r-2}).$$
\end{lemma}
Combining the computation of $\CI(B)$ (cf. \cite[11.42]{Salberger}), we obtain
\begin{equation}\label{eq:CCB2}
	\#\CC_0(B)^+=\kappa\#\triangle_{\max}\alpha(X)B(\log B)^{r-1}+O(B(\log B)^{r-2}).
\end{equation} 
Our goal is to refine \eqref{eq:CCB2}. We establish Theorem \ref{thm:equidistrcong} which gives asymptotic formulas for the number of integral points in $\CC_0(B)^+$ lying in a given real neighbourhood with congruence conditions.

 We fix in the remaining of this section a cone $\sigma_0\in\triangle_{\max}$ with an admissible ordering. 
 For every $1\leqslant j\leqslant d$, let $\lambda_j\in \mathopen]0,1\mathclose]$, and consider  \begin{equation}\label{eq:Binfty}
	\CB_\infty:=\prod_{j=1}^d \mathopen]0,\lambda_j\mathclose]\subset C_{\sigma_0}(\BR)\subset X(\BR),
\end{equation} which we shall a \emph{zero-cube}.
For every integer $l\in\BZ_{>0}$ and for every residue $\bxi_l\in\CX_0(\BZ/l\BZ)$,
consider the set \begin{equation}\label{eq:CC0xilVBinfty}
	\CC_0([\bxi_l,\CB_\infty];B)^+:=\{\XX\in\CC_0(B)^+: \pi(\XX)\in\CB_\infty,\XX\equiv \bxi_l\bmod l\}.
\end{equation}

\begin{theorem}\label{thm:equidistrcong}
	We have, uniformly for any pair $(l,\bxi_l)$,
	$$\#\CC_0([\bxi_l,\CB_\infty];B)^+=\kappa_{(l)}\left(\prod_{j=1}^{d}\lambda_{j}\right)\alpha(X)B(\log B)^{r-1}+O_{\varepsilon,\CB_\infty}(l^\varepsilon B(\log B)^{r-2+\frac{1}{\alpha_0}+\varepsilon}),$$ where $\alpha(X)$ is given by \eqref{eq:alphaV} and \begin{equation}\label{eq:kappal}
		\kappa_{(l)}:=\frac{1}{l^n}\left(\prod_{p\nmid l}\kappa_p\right).
	\end{equation} 
\end{theorem}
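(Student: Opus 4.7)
The plan is to refine Salberger's proof of Theorem \ref{thm:Salbergermainlemma} by adding, in a controlled way, a real cube constraint and congruence conditions. The strategy proceeds in three stages: first, reduce to the ``non-lopsided'' subset of the counting domain where all $\underline{\mathbf{X}}^{E_{\sigma_0}(j)}$ are large; second, for fixed $(X_1,\ldots,X_r)$, count the remaining $d$ variables in an explicit box with both congruence and coprimality conditions; third, sum over $(X_1,\ldots,X_r)$ satisfying the residue constraints modulo $l$ and approximate by the integral $\mathcal{I}_{\sigma_0}(B)$.

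First I would restrict attention to those $\underline{\mathbf{X}} \in \mathcal{C}_0([\boldsymbol{\xi}_l,\mathcal{B}_\infty];B)^+$ that additionally satisfy $\underline{\mathbf{X}}^{E_{\sigma_0}(j)} \geq (\log B)^A$ for every $1\leq j\leq d$, where $A$ is a sufficiently large constant (to be chosen at the end in terms of $\varepsilon$ and the polynomial growth in $l$ that appears below). By Corollary \ref{co:CAsigmaBCAAsigmaB}, the discarded portion contributes at most $O_A(B(\log B)^{r-2}\log\log B)$, which is absorbed in the claimed error. Under the non-lopsided condition, the condition $\pi(\underline{\mathbf{X}}) \in \mathcal{B}_\infty$ translates, via the parametrisation \eqref{eq:univmap} and the identity $z_{r+j} = X_{r+j}/\underline{\mathbf{X}}^{E_{\sigma_0}(j)}$, into the box constraint $1 \leq X_{r+j} \leq \lambda_j \underline{\mathbf{X}}^{E_{\sigma_0}(j)}$ whose side lengths are all $\geq \lambda_j (\log B)^A$.

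Next, for each fixed $(X_1,\ldots,X_r) \in \mathbb{N}_{\neq 0}^r$ with $X_i \equiv \xi_{l,i} \bmod l$, satisfying $\prod_i X_i^{a_i(\sigma_0)} \leq B$ and the non-lopsided condition, I would count the tuples $(X_{r+1},\ldots,X_{r+d})$ subject to: (a) $1 \leq X_{r+j} \leq \lambda_j \underline{\mathbf{X}}^{E_{\sigma_0}(j)}$; (b) $X_{r+j} \equiv \xi_{l,r+j} \bmod l$; (c) $\gcd(\underline{\mathbf{X}}^{\underline{\sigma}},\sigma \in \triangle_{\max}) = 1$. The coprimality condition (c) is handled exactly as in the proof of Salberger's Main Lemma \ref{thm:Salbergermainlemma}: for primes $p \mid l$, coprimality modulo $p$ is automatic from $\boldsymbol{\xi}_l \in \mathcal{X}_0(\mathbb{Z}/l\mathbb{Z})$; for primes $p \nmid l$, an inclusion–exclusion over primitive collections of rays yields the local densities $\prod_{p \nmid l} \kappa_p$ in the main term, with the error governed by the minimum primitive collection size $\alpha_0$, producing a contribution of size $O_\varepsilon(B(\log B)^{r-2+1/\alpha_0+\varepsilon})$. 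Simultaneously, the congruence (b) selects $\lfloor \lambda_j \underline{\mathbf{X}}^{E_{\sigma_0}(j)}/l\rfloor + O(1)$ values of $X_{r+j}$ in each coordinate, and crucially the non-lopsided condition makes the $O(1)$ correction small compared to the main factor $\lambda_j\underline{\mathbf{X}}^{E_{\sigma_0}(j)}/l$; expanding this product coordinatewise, only the diagonal term contributes to the main asymptotic, yielding a factor $\operatorname{vol}(\mathcal{B}_\infty) \underline{\mathbf{X}}^{E_{\sigma_0}(1)+\cdots+E_{\sigma_0}(d)}/l^d = \operatorname{vol}(\mathcal{B}_\infty)\prod_i X_i^{a_i(\sigma_0)-1}/l^d$ (using \eqref{eq:EjDsigma}).

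Finally, I would sum over $(X_1,\ldots,X_r)$ in the admissible range with congruence mod $l$, comparing the sum with $l^{-r}$ times the integral over $\Omega_{\sigma_0}(B)$ of $\prod_i x_i^{a_i(\sigma_0)-1}$; the discretisation boundary error can be controlled uniformly in $l$ by a variant of Lemma \ref{le:deltaB} using the slicing sum bound in Proposition \ref{co:slicing} (the non-lopsided condition forces the boundary contributions to be $O(B(\log B)^{r-2}\log\log B)$). Combining, the main term equals $\kappa_{(l)}\operatorname{vol}(\mathcal{B}_\infty)\mathcal{I}_{\sigma_0}(B)$, and by Lemma \ref{le:CIsigma0Bomega} together with Salberger's evaluation of $\int_{\mathcal{F}(B)}\mathbf{x}^{D_0}\,d\varpi_{\mathcal{T}_{\mathrm{NS}}}$ (cf.\ the derivation of Peyre's formula for the toric $\alpha$-constant), this integral equals $\alpha(X) B(\log B)^{r-1} + O(B(\log B)^{r-2})$. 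The main obstacle will be ensuring uniformity in $(l,\boldsymbol{\xi}_l)$: the sieve error from coprimality must be bounded without any $l$-dependence (which works because we sieve only primes $p \nmid l$, so the local densities $\kappa_p$ are untouched for $p \mid l$), and the lattice point boundary correction in the sum over $(X_1,\ldots,X_r)$ must be controlled uniformly using the slicing estimates, with the parameter $A$ chosen large enough (but independent of $l$) so that the error $O(l/(\log B)^A)$ per box-side is swallowed by the stated error bound $O_\varepsilon(B(\log B)^{r-2+1/\alpha_0+\varepsilon}\log\log B)$.
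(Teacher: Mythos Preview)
Your outline has the right building blocks, but step~2 contains a genuine gap. The coprimality condition $\gcd(\underline{\mathbf X}^{\underline\sigma},\sigma\in\triangle_{\max})=1$ does \emph{not} factor through the decomposition ``fix $(X_1,\dots,X_r)$, then sieve $(X_{r+1},\dots,X_{r+d})$''. Each monomial $\underline{\mathbf X}^{\underline\sigma}$ typically mixes rays inside and outside $\sigma_0(1)$, so for a fixed $r$-tuple the local condition at a prime $p\nmid l$ on the last $d$ coordinates depends on which of $X_1,\dots,X_r$ are divisible by $p$. Consequently the inner sieve does not produce the constant density $\kappa_p=\#\mathcal X_0(\mathbb F_p)/p^n$ (which is a density on $\mathbb F_p^n$, not on $\mathbb F_p^d$); it produces a function of the residues of $X_1,\dots,X_r$ modulo $p$, and only after averaging over those residues would one recover $\kappa_p$. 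Your sketch does not explain how this averaging interacts with the congruence mod~$l$ and with the error control, and this is precisely the point where Salberger's argument and the paper diverge from a naive fibrewise count.

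The paper avoids this by running the M\"obius inversion with $\mu_X$ on \emph{all} $n$ coordinates at once (Proposition~\ref{prop:eqdistCAinftyB}), reducing to $\sum_{\underline{\mathbf d}}\mu_X(\underline{\mathbf d})\,\#\mathcal A_{\underline{\mathbf d}}([\boldsymbol\xi_l,\mathcal B_\infty];B)$. Two features of that argument are worth highlighting against your plan. First, the part of the sum with $(\underline{\mathbf d},l)>1$ is shown to vanish \emph{exactly}, using that $\boldsymbol\xi_l\in\mathcal X_0(\mathbb Z/l\mathbb Z)$; this is what gives genuine uniformity in $l$ without any residual $l$-power in the error. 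Your proposal to absorb an $O(l/(\log B)^A)$ boundary term by choosing $A$ large but independent of $l$ cannot achieve this, since for $l$ larger than a fixed power of $\log B$ the bound fails. Second, for $(\underline{\mathbf d},l)=1$ the comparison of $\#\mathcal A_{\underline{\mathbf d}}([\boldsymbol\xi_l,\mathcal B_\infty];B)$ with $\#\mathcal A(\mathcal B_\infty;B)$ (Proposition~\ref{prop:CAxilBCAB}) is done by a box-translation argument whose boundary is controlled not only by $\delta(B)$ and the slicing sums of Proposition~\ref{co:slicing}, but also by the new set $\nu_{\sigma_0}(\mathcal B_\infty;B)$ of \S\ref{se:nusigmaB}; this is where the toric van der Corput estimate enters to handle the boundary created by the $\lambda_j$-constraints. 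Only after that does one invoke Proposition~\ref{co:CAsigmaWBCDWsigmaB} and the integral evaluation $\mathcal I(\mathcal B_\infty;B)=\operatorname{vol}(\mathcal B_\infty)\alpha(X)B(\log B)^{r-1}+O(B(\log B)^{r-2})$.
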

By convention, all implied constants in the rest of \S\ref{se:purityproof} are allowed to depend on $\CB_\infty$.
\subsection{Ingredients of proof of Theorem \ref{thm:equidistrcong}}\label{se:comparelatticepts}
The main results of this subsection are Propositions \ref{prop:eqdistCAinftyB} and \ref{co:CAsigmaWBCDWsigmaB}, and we deduce Theorem \ref{thm:equidistrcong} based on them.

Recall \eqref{eq:Binfty} \eqref{eq:CC0xilVBinfty}. We define 
\begin{equation}\label{eq:CACBinfty}
	\CA(\CB_\infty;B):=\CA(B)\cap \pi^{-1}(\CB_\infty).
\end{equation} The following generalises  Lemma \ref{thm:Salbergermainlemma}.
\begin{proposition}\label{prop:eqdistCAinftyB}
We have, uniformly for any pair $(l,\bxi_l)$,
$$\#\CC_0([\bxi_l,\CB_\infty];B)^+=\kappa_{(l)}\#\CA(\CB_\infty;B)+O_\varepsilon\left(l^\varepsilon B(\log B)^{r-2+\frac{1}{\alpha_0}+\varepsilon}\right),$$
where $\kappa_{(l)}$ is defined by \eqref{eq:kappal}.
\end{proposition}
We next define
\begin{equation}\label{eq:CDCBinfty}
	\CD(\CB_\infty;B):=\CD(B)\cap \pi^{-1}(\CB_\infty),
\end{equation}
\begin{equation}\label{eq:CICBinfty}
	\CI(\CB_\infty;B):=\int_{\CD(\CB_\infty;B)}\left(\max_{\sigma\in\triangle_{\max}}\YY^{D_0(\sigma)}\right)\operatorname{d}\omega^{X_0}_{\infty}(\YY)=\int_{\CD(\CB_\infty;B)}\operatorname{d}\xx. 
\end{equation}
The following generalises Lemma \ref{le:exchangeABIB}, which compares the cardinality of $\CA(\CB_\infty;B)$ \eqref{eq:CACBinfty} with the integral $\CI(\CB_\infty;B)$ \eqref{eq:CICBinfty}. 
\begin{proposition}\label{co:CAsigmaWBCDWsigmaB}
	We have $$\#\CA(\CB_\infty;B)=\CI(\CB_\infty;B)+O(B(\log B)^{r-2}\log\log B).$$
\end{proposition}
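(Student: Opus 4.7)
The proof follows the three-step decomposition of Theorem \ref{thm:CABCAAB}, now upgraded to incorporate the extra cube constraint that $\pi(\XX) \in \CB_\infty$. Under the parametrisation of $U_{\sigma_0}$ given in \eqref{eq:univmap} and the identity $F_{\sigma_0}(j) = D_{\varrho_{r+j}} - E_{\sigma_0}(j)$, one has $z_{r+j} = \XX^{F_{\sigma_0}(j)} = X_{r+j}/\XX^{E_{\sigma_0}(j)}$, so the condition $\pi(\XX) \in \CB_\infty$ translates into the system
$$X_{r+j} \leqslant \lambda_j \XX^{E_{\sigma_0}(j)}, \qquad 1 \leqslant j \leqslant d.$$
Fixing an exponent $A \geqslant 2$, I introduce the auxiliary sets
$$\CA^{(A)}(\CB_\infty;B) := \CA(\CB_\infty;B) \cap \CA^{(A)}_{\sigma_0}(B), \qquad \CD^{(A)}(\CB_\infty;B) := \CD(\CB_\infty;B) \cap \CD^{(A)}_{\sigma_0}(B),$$
and the corresponding integral $\CI^{(A)}(\CB_\infty;B) := \int_{\CD^{(A)}(\CB_\infty;B)} \operatorname{d}\xx$. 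The target estimate will follow from the triangle inequality applied to $\#\CA(\CB_\infty;B) - \CI(\CB_\infty;B)$, split as $(\#\CA - \#\CA^{(A)}) + (\#\CA^{(A)} - \CI^{(A)}) + (\CI^{(A)} - \CI)$.

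The first and third differences come for free: the inclusions $\CA(\CB_\infty;B) \setminus \CA^{(A)}(\CB_\infty;B) \subset \CA_{\sigma_0}(B) \setminus \CA^{(A)}_{\sigma_0}(B)$ and $\CD(\CB_\infty;B) \setminus \CD^{(A)}(\CB_\infty;B) \subset \CD_{\sigma_0}(B) \setminus \CD^{(A)}_{\sigma_0}(B)$, combined with Corollary \ref{co:CAsigmaBCAAsigmaB} and Lemma \ref{le:CIABsum}, bound both by $O(B(\log B)^{r-2}\log\log B)$. So the entire burden of the proof falls on the middle comparison.

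For the middle term, I would tile $\CD^{(A)}(\CB_\infty;B)$ by unit integer cubes and estimate the number of cubes that straddle its boundary. That boundary decomposes into three pieces: (i) the height boundary $\max_\sigma \YY^{D_0(\sigma)} = B$, absorbed by the set $\delta(B)$ of Lemma \ref{le:deltaB}; (ii) the large-monomial boundary $\YY^{E_{\sigma_0}(j)} = (\log B)^A$, absorbed by $\CG^{(A)}_{\sigma_0}(B)$ of Lemma \ref{le:CGAB}; and (iii) the new cube boundary $Y_{r+j} = \lambda_j \YY^{E_{\sigma_0}(j)}$, which is the main obstacle since it is specific to the present proposition. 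To handle (iii), fix $j$ and note that on a boundary unit cube the value $X_{r+j}$ must lie in an interval of length $O_{\CB_\infty}(1)$ around $\lambda_j \XX^{E_{\sigma_0}(j)}$, yielding $O(1)$ admissible values. Summing the remaining variables $X_{r+k}$, $k \neq j$, each ranging up to $\lambda_k \XX^{E_{\sigma_0}(k)}$, and using $\prod_{k \neq j}\XX^{E_{\sigma_0}(k)} = \XX^{D_0(\sigma_0)}/(X_1 \cdots X_r \cdot \XX^{E_{\sigma_0}(j)})$ (from \eqref{eq:EjDsigma}) together with the defining lower bound $\XX^{E_{\sigma_0}(j)} \geqslant (\log B)^A$ on $\CA^{(A)}$, the number of boundary lattice points of type (iii) is controlled by
$$\ll_{\CB_\infty} (\log B)^{-A} \sum_{\substack{(X_1, \ldots, X_r) \in \BN_{\neq 0}^r\\ \prod_i X_i^{a_i(\sigma_0)} \leqslant B}} \prod_{i=1}^r X_i^{a_i(\sigma_0)-1} \ll (\log B)^{-A} B(\log B)^{r-1},$$
via Lemma \ref{le:sublemma}. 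The choice $A = 2$ makes this $O_{\CB_\infty}(B(\log B)^{r-3})$, and combining with (i) and (ii) yields $\#\CA^{(A)}(\CB_\infty;B) - \CI^{(A)}(\CB_\infty;B) = O(B(\log B)^{r-2})$, which completes the proof.
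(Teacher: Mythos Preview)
Your three-term decomposition is sound, and the first and third terms are indeed handled by Corollary~\ref{co:CAsigmaBCAAsigmaB} and Lemma~\ref{le:CIABsum}. The gap is in your treatment of boundary piece (iii) in the middle term.

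The assertion that ``$X_{r+j}$ must lie in an interval of length $O_{\CB_\infty}(1)$ around $\lambda_j\XX^{E_{\sigma_0}(j)}$'' is false. The monomial $\YY^{E_{\sigma_0}(j)}=\prod_{i=1}^r Y_i^{b_i^{(j,\sigma_0)}}$ depends only on the first $r$ coordinates, and over a unit cube $\prod_i[X_i,X_i+1)$ it varies by an amount of order
\[
\XX^{E_{\sigma_0}(j)}\sum_{i:\,b_i^{(j,\sigma_0)}\neq 0}\frac{|b_i^{(j,\sigma_0)}|}{X_i},
\]
which is unbounded (the $b_i^{(j,\sigma_0)}$ are arbitrary integers, and there is no lower bound on the $X_i$). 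Hence, for fixed $(X_1,\ldots,X_r)$, the number of admissible integers $X_{r+j}$ such that the unit cube straddles the surface $Y_{r+j}=\lambda_j\YY^{E_{\sigma_0}(j)}$ is $O\bigl(1+\XX^{E_{\sigma_0}(j)}\sum_i 1/X_i\bigr)$, not $O(1)$. Your subsequent estimate therefore only captures the ``$O(1)$'' part and misses the dominant contribution.

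This missing contribution is exactly what the paper isolates as case~(b) in the definition of $\nu_{\sigma_0}(\CB_\infty;B)$ (\S\ref{se:nusigmaB}). Summing the extra term over $(X_1,\ldots,X_r)$ gives, via~\eqref{eq:EjDsigma},
\[
\sum_{i_0=1}^{r}\sum_{\substack{\XX\in\BN_{\neq 0}^r\\\XX^{D_0(\sigma_0)}\leqslant B}}\frac{\prod_{j}\XX^{E_{\sigma_0}(j)}}{X_{i_0}}
=\sum_{i_0=1}^{r}S^{[r]\prime}_{\aa(\sigma_0),i_0}(B)=O\bigl(B(\log B)^{r-2}\bigr)
\]
by Lemma~\ref{le:twospecsum2}. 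Once you insert this correction (and also absorb the small-coordinate cases $X_i\leqslant 2$ via Lemma~\ref{le:CABik}, which you need in order to control the variation when some $X_i$ is tiny), your argument goes through and in fact recovers the paper's Proposition~\ref{prop:nusigmaWB}. As written, however, the proof is incomplete.
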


\begin{proof}[Proof of Theorem \ref{thm:equidistrcong} assuming Propositions \ref{prop:eqdistCAinftyB} and \ref{co:CAsigmaWBCDWsigmaB}]\label{se:proofequistr}
We first introduce some more frequently used notation from  \cite[\S11]{Salberger}.
For every $\sigma\in\triangle_{\max}$ with an admissible ordering, we now define (recall \eqref{eq:Fj})
\begin{equation}\label{eq:Ej}
	E_\sigma(j):=D_{\rho_{r+j}}-F_\sigma(j),\quad 1\leqslant j\leqslant d.
\end{equation}
Note that $E_\sigma(j)$ has support in $\cup_{i=1}^r D_{\rho_i}$, so $\XX^{E_\sigma(j)}$ is a Laurent monomial in $X_1,\cdots,X_r$ for every $1\leqslant j\leqslant d$, and we have (recall \eqref{eq:D0sigma})
\begin{equation}\label{eq:EjDsigma}
	\begin{split}
		\sum_{j=1}^{d}E_\sigma(j)=\sum_{j=1}^{d}D_{\rho_{r+j}}+\sum_{\rho\in\triangle(1)}\langle -m_{D_0}(\sigma),n_\rho\rangle D_\rho=D_0(\sigma)-\sum_{i=1}^{r}D_{\rho_i}.
	\end{split}
\end{equation} 
We have that (cf. \cite[p. 249]{Salberger}) in $\Pic(X)$, for every $\sigma\in\triangle_{\max}$ with an admissible ordering, \begin{equation}\label{eq:linequiv}
	[D_0(\sigma)]=[D_0],\quad [D_{r+j}]=[E_{\sigma}(j)],\quad 1\leqslant j\leqslant d.
\end{equation}
If we denote by $x_k,1\leqslant k\leqslant n$ the coordinate regular functions on $X_0\subset\BA^n$, then on $\CTNS(\BR)=\Hom_\BZ(\Pic(X),\BR)\subset \BG_{\operatorname{m}}^{\triangle(1)}(\BR)$, thanks to \eqref{eq:linequiv}, we have
\begin{equation}\label{eq:coordfuneq}
	\xx^{D_0(\sigma)}=\xx^{D_0},\quad \text{and}\quad\xx^{E_{\sigma}(j)}=x_{r+j},\quad\text{for every }\sigma\in\triangle_{\max},1\leqslant j\leqslant d.
\end{equation}

The Lang--Weil estimate \cite{Lang-Weil} implies
$\kappa_p=1+O(p^{-\frac{1}{2}})$, whence \begin{equation}\label{eq:LWkappa}
	l^{-\varepsilon}\ll_\varepsilon \prod_{p\mid l}\kappa_p\ll_\varepsilon l^\varepsilon.
\end{equation} Therefore we have \begin{equation}\label{eq:kappabd}
\kappa_{(l)}\ll_\varepsilon l^{-n+\varepsilon}=O(1).
\end{equation}

Returing to the fixed zero-cube $\CB_\infty\subset C_{\sigma_0}(\BR)$, we shall prove 
$$\CI(\CB_\infty;B)=\left(\prod_{j=1}^{d}\lambda_{j}\right)\alpha(X)B(\log B)^{r-1}+O(B(\log B)^{r-2}\log\log B),$$ 
from which Theorem \ref{thm:equidistrcong} follows on combining Propositions \ref{prop:eqdistCAinftyB}, \ref{co:CAsigmaWBCDWsigmaB} and \eqref{eq:kappabd}.

We introduce (cf. \cite[11.34]{Salberger})
\begin{equation}\label{eq:OmegasigmaB}
	\Omega_{\sigma_0}(B):=\{\ZZ\in\BR_{\geqslant 1}^r:\ZZ^{D_0(\sigma_0)}\leqslant B,\min_{1\leqslant j\leqslant d}\ZZ^{E_{\sigma_0}(j)}\geqslant 1\},
\end{equation} and (cf. \cite[11.37]{Salberger})
\begin{equation}\label{eq:CFB}
	\CF(B):=\{\YY\in \BG_{\operatorname{m}}^{\triangle(1)}(\BR)\cap \CTNS(\BR):\YY^{D_0}\leqslant B,\min_{1\leqslant k\leqslant n}Y_k\geqslant 1\}\subset \CTNS(\BR).
\end{equation}
Then $\CF(B)$ is $\BR$-diffeomorphic to $\Omega_{\sigma_0}(B)$ via projection onto the first $r$ coordinates.
Let $\varpi_{\CTNS}$ be the global $\CTNS$-invariant differential form (cf. \cite[(1.13) and Lemma 11.38]{Salberger}) represented in the coordinate system of $U_{\sigma_0}$ with respect to an admissible ordering by $$\frac{\operatorname{d}x_1}{x_1}\wedge\cdots\wedge\frac{\operatorname{d}x_r}{x_r}.$$ Then we compute, in a similar manner as in \cite[11.33--11.41]{Salberger}, 
\begin{align*}
	\CI(\CB_\infty;B) &=\int_{\Omega_{\sigma_0}(B)}\left(\prod_{j=1}^{d}(\lambda_{j}\xx^{E_{\sigma_0}(j)}-1)\right)\operatorname{d}x_1\cdots\operatorname{d}x_r\\&=\int_{\Omega_{\sigma_0}(B)}\frac{\xx^{D_0(\sigma_0)}}{\prod_{i=1}^{r}x_i}\left(\prod_{j=1}^{d}(\lambda_{j}-\xx^{-E_{\sigma_0}(j)})\right)\operatorname{d}x_1\cdots\operatorname{d}x_r\\&=\int_{\CF(B)}\xx^{D_0}\left(\prod_{j=1}^{d}(\lambda_{j}-x^{-1}_{r+j})\right)\frac{\operatorname{d}x_1}{x_1}\cdots\frac{\operatorname{d}x_r}{x_r}\\&=\left(\prod_{j=1}^{d}\lambda_{j}\right)\int_{\CF(B)}\xx^{D_0}\operatorname{d}\varpi_{\CTNS}+O\left(\sum_{j=1}^{d}\int_{\CF(B)}\left(\frac{\xx^{D_0}}{x_{r+j}}\right)\operatorname{d}\varpi_{\CTNS}\right)\\ &=\left(\prod_{j=1}^{d}\lambda_{j}\right)\alpha(X)B(\log B)^{r-1}+O(B(\log B)^{r-2}).
\end{align*}
We are done.
\end{proof}

\subsection{Estimates of boundary sums}
Observe that for every $\YY\in \CD(\CB_\infty;B)$, we have, on recalling \eqref{eq:Fj} \eqref{eq:univmap} and \eqref{eq:Ej},
\begin{equation}\label{eq:conditionEsigmaW}	
	Y_{r+j}\leqslant \lambda_j \YY^{E_{\sigma_0}(j)} \quad \text{for every }1\leqslant j\leqslant d. \end{equation} 
To estimate the lattice points in $\CD(\CB_\infty;B)$, we need satisfactory control on the points that are, among other conditions, sufficiently close to the boundary of \eqref{eq:conditionEsigmaW}. The goal of this subsection is to deduce bounds on the cardinalities of two types of boundary sets $\nu^{(a)},\nu^{(b)}$, which are the key ingredients of the proof of Propositions \ref{prop:eqdistCAinftyB} and \ref{co:CAsigmaWBCDWsigmaB}.

We shall frequently invoke the following Fubini-type summation lemma.
\begin{lemma}[cf. \cite{Salberger} Sublemma 11.24 and \cite{Pieropan} Lemma 12]\label{le:sublemma}
	Let $m\in\BZ_{>0}$. Let $e\in\BR_{>0}$ and $\ee=(e_1,\cdots,e_m)\in\BZ_{\geqslant 0}^m$. For $B>3$ and $\lambda\in\BR_{>0}$ such that $1\leqslant \lambda \leqslant B$, we consider the following  sum and integral 
	$$S_{e,\ee}^{[m]}(B,\lambda):=\sum_{(g_1,\cdots,g_m)\in\BZ_{>0}^m}^{*}\prod_{i=1}^{m}g_i^{\frac{e_i}{e}-1},\quad R_{e,\ee}^{[m]}(B,\lambda):=\int_{(g_1,\cdots,g_m)\in\BR_{\geqslant 1}^m}^{*}\prod_{i=1}^{m}g_i^{\frac{e_i}{e}-1}\operatorname{d}\mathbf{g},$$ where $*$ means that the $m$-tuples $\mathbf{g}=(g_1,\cdots,g_m)$ satisfy
	\begin{equation}\label{eq:condition2}
		\prod_{i=1}^m g_i^{e_i}\leqslant \frac{B}{\lambda},\quad \text{and}\quad \max_{1\leqslant i\leqslant m}g_i\leqslant B.
	\end{equation} Then
	$$\max\left(S_{e,\ee}^{[m]}(B,\lambda),R_{e,\ee}^{[m]}(B,\lambda)\right)=O\left(\max\left((\log B)^m,\left(\frac{B}{\lambda}\right)^{\frac{1}{e}}(\log B)^{m-1}\right)\right),$$ where the implied constant depends on $m,e,\ee$, but does not depend on $\lambda$.
\end{lemma}
\begin{remark}\label{rmk:sublemma}
	The upper bound $(\log B)^m$ is achieved precisely when $\ee=\mathbf{0}$ and $\lambda\geqslant B/(\log B)^e$.
\end{remark}
 
For every $\sigma\in\triangle_{\max}$, we recall that the divisor $D_0(\sigma)$ \eqref{eq:D0sigma} is effective with support in $\triangle(1)\setminus\sigma(1)$.
\begin{lemma}\label{co:slicing} 
	For every $\sigma\in\triangle_{\max}$ with an admissible ordering,  for every $1\leqslant i_0\leqslant r$, and for $B>2$,  consider the sum $$\CN_\sigma(B)_{i_0,\eta[B]}:=\sum_{\substack{(X_1,\cdots,X_r)\in\BZ_{>0}^{r}\\ \XX^{D_0(\sigma)}\leqslant B,\max_{1\leqslant i\leqslant r} X_i\leqslant B\\ X_{i_0}=\eta[B](X_i,i\neq i_0)}}\frac{\XX^{D_0(\sigma)}}{\prod_{i=1}^{r} X_i},$$ where $\eta[B]:\BZ_{>0}^{r-1}\to\BR_{\geqslant 1}$ is a map depending on $B$.
	Then $$\CN_\sigma(B)_{i_0,\eta[B]}=O(B(\log B)^{r-2}).$$
	The implied constant is independent of the map $\eta[B]$.
\end{lemma}
\begin{proof}[Proof of Lemma \ref{co:slicing}]
	
	For every $1\leqslant i\leqslant r$, let
	\begin{equation}\label{eq:arhosigma}
		a_i(\sigma):=1+\langle -m_{D_0}(\sigma),n_{\rho_i}\rangle,
	\end{equation} so that (recall \eqref{eq:D0sigma}) $$D_0(\sigma)=\sum_{1\leqslant i\leqslant r}a_{i}(\sigma)D_{\rho_i},\quad \XX^{D_0(\sigma)}=\prod_{i=1}^{r}X_i^{a_i(\sigma)}.$$ 
	We write $\aa(\sigma):=(a_1(\sigma),\cdots,a_r(\sigma))\in\BZ_{\geqslant 0}^r$ and for every $i=1,\cdots,r$, let $\aa_{*i}(\sigma)$ denote the $(r-1)$-subtuple of $\aa(\sigma)$ without the term $a_{i}(\sigma)$.
	
	We may assume that $i_0=1$ and we write $\CN_\sigma(B)_{i_0,\eta[B]}=\CN_\sigma(B)_{\eta}$ for simplicity. We shall discuss two cases separately. 
	
	If $a_1(\sigma)>0$, then the condition $\prod_{i=1}^r X_i^{a_i(\sigma)}\leqslant B$ implies that $$\eta[B](X_2,\cdots,X_n)\leqslant \left(B/\prod_{i=2}^{r} X_i^{a_i(\sigma)}\right)^{\frac{1}{a_1(\sigma)}}.$$ 
	Then, we have, since $\eta[B]^{a_1(\sigma)}\geqslant\eta[B]\geqslant 1$, using Lemma \ref{le:sublemma} and its notation,
	\begin{align*}
		\CN_\sigma(B)_{\eta} &\leqslant \sum_{\substack{(X_2,\cdots,X_r)\in\BZ_{>0}^{r-1}\\ \prod_{i=2}^r X_i^{a_i(\sigma)}\leqslant B/(\eta[B](X_i,i\geqslant 2))^{a_1(\sigma)}\\\max_{2\leqslant i\leqslant r} X_i\leqslant B}}\left(B/\prod_{i=2}^r X_i^{a_i(\sigma)}\right)^{\frac{a_1(\sigma)-1}{a_1(\sigma)}} \prod_{i=2}^r X_i^{a_i(\sigma)-1}\\ &\leqslant B^{1-\frac{1}{a_1(\sigma)}}\sum_{\substack{(X_2,\cdots,X_r)\in\BZ_{>0}^{r-1}\\ \prod_{i=2}^r X_i^{a_i(\sigma)}\leqslant B\\ \max_{2\leqslant i\leqslant r} X_i\leqslant B}}\prod_{i=2}^{r}X_i^{\frac{a_i(\sigma)}{a_1(\sigma)}-1}\\ &=B^{1-\frac{1}{a_1(\sigma)}}S^{[r-1]}_{a_1(\sigma),\aa_{*1}(\sigma)}(B,1)=O(B(\log B)^{r-2}).
	\end{align*}
	
	However if $a_1(\sigma)=0$, then again since $\eta[B]\geqslant 1$,
	\begin{align*}
		\CN_\sigma(B)_{\eta}&=\sum_{\substack{(X_2,\cdots,X_r)\in\BZ_{>0}^{r-1}\\ \prod_{i=2}^r X_i^{a_i(\sigma)}\leqslant B/\eta[B](X_i,i\geqslant 2)\\\max_{2\leqslant i\leqslant r} X_i\leqslant B}}\frac{1}{\eta[B](X_2,\cdots,X_n)}\prod_{i=2}^r X_i^{a_i(\sigma)-1}\\ &\leqslant \sum_{\substack{(X_2,\cdots,X_r)\in\BZ_{>0}^{r-1}\\ \prod_{i=2}^r X_i^{a_i(\sigma)}\leqslant B\\\max_{2\leqslant i\leqslant r} X_i\leqslant B}}\prod_{i=2}^r X_i^{a_i(\sigma)-1}= S^{[r-1]}_{1,\aa_{*1}(\sigma)}(B,1)=O(B(\log B)^{r-2}).
	\end{align*} 
	
	None of the estimates above depend on the map $\eta[B]$, neither do any implied constants. This finishes the proof.
\end{proof}

We recall \eqref{eq:CAB} and define
\begin{equation}\label{eq:CAsigmaB}
	\CA_\sigma(B):=\CA(B)\cap \pi^{-1}(C_{\sigma}(\BR)).
\end{equation} 
On recalling the parametrisation \eqref{eq:univmap}
(cf. \cite[11.22, 11.23]{Salberger}), we have
\begin{equation}\label{eq:CAsigmaBcond}
	\CA_\sigma(B)=\{\XX\in\BZ_{>0}^n:\max_{1\leqslant k\leqslant n}X_k\leqslant B,\XX^{D_0(\sigma)}\leqslant B,\text{ for every }1\leqslant j\leqslant d,X_{r+j}\leqslant \XX^{E_\sigma(j)}\}.
\end{equation}
For every $1\leqslant i\leqslant r$ and $k\in\BZ_{>0}$, we introduce the set
\begin{equation}\label{eq:CAsigmaik}
	\CA_{\sigma}(B)_{i,k}:=\{\XX\in\CA_{\sigma}(B):X_i=k\}.
\end{equation} Then it follows directly from Lemma \ref{co:slicing}  that
\begin{corollary}\label{co:CAsigmaikbd}
	We have $$\#\CA_{\sigma}(B)_{i,k}=O(B(\log B)^{r-2}).$$
\end{corollary} 

For each $1\leqslant i_0\leqslant r$, consider the sum \begin{equation}\label{eq:Sprimebd}
	S^{(1)}_{\sigma,i_0}(B):=\sum_{\substack{\XX\in\BZ_{>0}^r:\XX^{D_0(\sigma)}\leqslant B\\ \max_{1\leqslant i\leqslant r}X_i\leqslant B}}\frac{\XX^{D_0(\sigma)}}{X_{i_0}\prod_{i=1}^{r}X_i}.
\end{equation}
\begin{lemma}\label{le:twospecsum2}
	We have	$$S^{(1)}_{\sigma,i_0}(B)=O(B(\log B)^{r-2}).$$
\end{lemma}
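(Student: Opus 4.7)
My plan is to combine the identity $\sum_{j=1}^{d} E_{\sigma_0}(j) = D_0(\sigma_0) - \sum_{i=1}^{r} D_{\varrho_i}$ from \eqref{eq:EjDsigma} with a direct application of the inductive summation lemma (Lemma \ref{le:sublemma}) on the $r-1$ variables $X_i$, $i \neq i_0$, so that the extra factor $1/X_{i_0}$ shaves off one logarithm from Salberger's baseline estimate.

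First, \eqref{eq:EjDsigma} and \eqref{eq:D0sigmaai} together give $\prod_{j=1}^{d}\XX^{E_{\sigma_0}(j)} = \XX^{D_0(\sigma_0)}/\prod_{i=1}^{r}X_i = \prod_{i=1}^{r} X_i^{a_i(\sigma_0)-1}$, so
\[
S^{[r]\prime}_{\aa(\sigma_0),i_0}(B) = \sum_{X_{i_0} \geqslant 1} X_{i_0}^{a_{i_0}(\sigma_0)-2} \sum_{(X_i)_{i\neq i_0}}{}^{*} \prod_{i\neq i_0} X_i^{a_i(\sigma_0)-1},
\]
where $*$ means $\prod_{i\neq i_0} X_i^{a_i(\sigma_0)} \leqslant B/X_{i_0}^{a_{i_0}(\sigma_0)}$ and $\max_{i\neq i_0} X_i \leqslant B$. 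For fixed $X_{i_0}$, Lemma \ref{le:sublemma} applied with $m=r-1$, $e=1$, $\ee = \aa_{*i_0}(\sigma_0)$ and $\lambda = X_{i_0}^{a_{i_0}(\sigma_0)}$ yields
\[
\sum_{(X_i)_{i\neq i_0}}{}^{*} \prod_{i\neq i_0} X_i^{a_i(\sigma_0)-1} \;\ll\; \max\!\left((\log B)^{r-1},\; \frac{B}{X_{i_0}^{a_{i_0}(\sigma_0)}}(\log B)^{r-2}\right).
\]

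I then split the outer sum over $X_{i_0}$ according to which term in the maximum dominates, with threshold $X_{i_0}^{a_{i_0}(\sigma_0)} \asymp B/\log B$. In the ``small $X_{i_0}$'' range the integrand is bounded by $X_{i_0}^{-2} B(\log B)^{r-2}$, and summing over all $X_{i_0} \geqslant 1$ gives $O(B(\log B)^{r-2})$ using $\sum X_{i_0}^{-2} < \infty$. In the ``large $X_{i_0}$'' range (which forces $a_{i_0}(\sigma_0) \geqslant 1$), the height constraint $\XX^{D_0(\sigma_0)} \leqslant B$ restricts $X_{i_0} \leqslant B^{1/a_{i_0}(\sigma_0)}$, and the contribution is bounded by $(\log B)^{r-1} \sum_{(B/\log B)^{1/a_{i_0}(\sigma_0)} < X_{i_0} \leqslant B^{1/a_{i_0}(\sigma_0)}} X_{i_0}^{a_{i_0}(\sigma_0)-2}$; a case check on $a_{i_0}(\sigma_0)=1$ (giving $(\log B)^{r-1}\log\log B$) versus $a_{i_0}(\sigma_0) \geqslant 2$ (giving $(\log B)^{r-1} B^{1 - 1/a_{i_0}(\sigma_0)}$) shows both are $\ll B(\log B)^{r-2}$ for $B$ large.

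There is no real obstacle here: the argument is parallel to the estimates underlying Proposition \ref{co:slicing} and Lemma \ref{le:sublemma}, with the role of the ``slicing'' variable replaced by a genuine summation that gains $1/X_{i_0}$. The only mild care needed is in the case $a_{i_0}(\sigma_0)=0$, where the ``large $X_{i_0}$'' range is empty for large $B$ and only Case A contributes, still yielding $O(B(\log B)^{r-2})$.
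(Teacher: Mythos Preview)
Your proof is correct and follows essentially the same approach as the paper: both rewrite the summand via \eqref{eq:EjDsigma}--\eqref{eq:D0sigmaai} as $X_{i_0}^{a_{i_0}(\sigma_0)-2}\prod_{i\neq i_0}X_i^{a_i(\sigma_0)-1}$, apply Lemma~\ref{le:sublemma} to the inner $(r-1)$-variable sum with $\lambda=X_{i_0}^{a_{i_0}(\sigma_0)}$, and then sum over $X_{i_0}$ using $\sum X_{i_0}^{-2}<\infty$. The only cosmetic difference is that the paper organises the case split by whether $a_{i_0}(\sigma_0)=0$ or $>0$ before bounding the maximum, whereas you split according to which branch of the maximum dominates; the resulting estimates ($B(\log B)^{r-2}$, $B^{1-1/a_{i_0}(\sigma_0)}(\log B)^{r-1}$, and a logarithmic term) are identical.
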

\begin{proof}
	We may assume that $i_0=1$. Recalling \eqref{eq:arhosigma}, we have $$S^{(1)}_{\sigma,1}(B)=\sum_{\substack{\XX\in\BZ_{>0}^r:\XX^{D_0(\sigma)}\leqslant B\\ \max_{1\leqslant i\leqslant r}X_i\leqslant B}}X_{1}^{a_{1}(\sigma)-2}\prod_{\substack{2\leqslant i\leqslant r}}X_i^{a_i(\sigma)-1}.$$
	If $a_{1}(\sigma)=0$, then by Lemma \ref{le:sublemma},
	\begin{align*}
		S^{(1)}_{\sigma,1}(B)=\left(\sum_{X_{1}=1}^{\lfloor B\rfloor}\frac{1}{X_{1}^2}\right) S_{1;\aa_{*1}(\sigma)}^{[r-1]}(B,1)=O(B(\log B)^{r-2}).
	\end{align*}
	If $a_{1}(\sigma)>0$, then again by by Lemma \ref{le:sublemma},
	\begin{align*}
		S^{(1)}_{\sigma,1}(B)&\leqslant\sum_{X_{1}=1}^{\lfloor B^{1/a_{1}(\sigma)}\rfloor}X_1^{a_{1}(\sigma)-2}S_{1;\aa_{*1}(\sigma)}^{[r-1]}\left(B,X_{1}^{a_{1}(\sigma)}\right)\\ &\ll \sum_{X_{1}=1}^{\lfloor B^{^{1/a_{1}(\sigma)}}\rfloor}X_1^{a_{1}(\sigma)-2}\max\left(\frac{B}{X_{1}^{a_{1}(\sigma)}}(\log B)^{r-2},(\log B)^{r-1}\right)\\ &\ll B(\log B)^{r-2}+B^{1-\frac{1}{a_{1}(\sigma)}}(\log B)^{r-1}+(\log B)^r=O(B(\log B)^{r-2}). \qedhere
	\end{align*}
\end{proof}

For every $1\leqslant j\leqslant d$, since $E_\sigma(j)$ \eqref{eq:Ej} has support in $\cup_{i=1}^r D_{\rho_i}$, we let $\bb^{(j,\sigma)}=(b_1^{(j,\sigma)},\cdots,b_r^{(j,\sigma)})\in\BZ^r$ be such that
\begin{equation}\label{eq:Esigmajbij}
	E_\sigma(j)=\sum_{i=1}^{r}b_i^{(j,\sigma)}D_{\rho_i},
\end{equation} so that
\begin{equation}\label{eq:bijsigma}
	\XX^{E_\sigma(j)}=\prod_{i=1}^rX_i^{b_i^{(j,\sigma)}}.
\end{equation}
Note that $$b_i^{(j,\sigma)}=-\langle n_{\rho_{r+j}}^\vee,n_{\rho_i}\rangle$$ in view of \eqref{eq:Fj} \eqref{eq:Ej}. We observe that $\bb^{(j,\sigma)}\neq \mathbf{0}$ for any $j$, thanks to the completeness and regularity of the fan $\triangle$.

Returning to the fixed $\sigma_0\in\triangle_{\max}$ with a fixed admissible ordering and a given parametrisation \eqref{eq:univmap}, and to the fixed zero cube $\CB_\infty$ \eqref{eq:Binfty}, we next derive estimates of lattice points lying around the boundary of the real neighbourhood $\CB_\infty$.

For $1\leqslant j_0\leqslant d$, let $\nu_{j_0}^{(a)}(\CB_\infty;B)$  be the subset of $\XX\in \CA(B)$ such that
\begin{equation}\label{eq:nusigmaA}
	X_i>1 \quad\text{for every } i\in\{1,\cdots,r\};
\end{equation}	
\begin{equation}\label{eq:nusigmaB2}
	X_{r+j}\leqslant\lambda_{j} 2^{\sum_{i=1}^{r}|b_{i}^{(j,\sigma_0)}|}\XX^{E_{\sigma_0}(j)},\text{ for every } j\neq j_0;
\end{equation}
	\begin{equation}\label{eq:nusigmaB1}
	X_{r+j_0}\leqslant\lambda_{j_0} \XX^{E_{\sigma_0}(j_0)};
\end{equation} 
\begin{equation}\label{eq:sigmaa}
	X_{r+j_0}+3>\lambda_{j_0} \XX^{E_{\sigma_0}(j_0)}.
\end{equation}
\begin{proposition}\label{prop:nusigmaWBa} We have
	$$\#\nu_{j_0}^{(a)}(\CB_\infty;B)=O(B(\log B)^{r-2}\log\log B).$$
\end{proposition}
\begin{proof}[Proof of Proposition \ref{prop:nusigmaWBa}]
Conditions \eqref{eq:nusigmaB1} and \eqref{eq:sigmaa} imply that, having fixed an $r$-tuple $(X_1,\cdots,X_r)$, $X_{r+j_0}$ can take at most $3$ different values. Together with \eqref{eq:nusigmaB2},  under the additional assumption $\XX^{E_{\sigma_0}(j_0)}\geqslant \log B$, on using \eqref{eq:EjDsigma} and Lemma \ref{le:sublemma}, we deduce that
\begin{align*}
	 \#\{\XX\in\nu_{j_0}^{(a)}(\CB_\infty;B):\XX^{E_{\sigma_0}(j_0)}\geqslant \log B\}\ll &\sum_{\substack{\XX\in\BZ_{>0}^r:\XX^{D_0(\sigma)}\leqslant B\\ \max_{1\leqslant i\leqslant r}X_i\leqslant B\\ \XX^{E_{\sigma_0}(j_0)}\geqslant \log B}}\prod_{\substack{1\leqslant j\leqslant d\\ j\neq j_0}} \XX^{E_{\sigma_0}(j)}\\ \ll &(\log B)^{-1}\sum_{\substack{\XX\in\BZ_{>0}^r:\XX^{D_0(\sigma)}\leqslant B\\ \max_{1\leqslant i\leqslant r}X_i\leqslant B}}\frac{\XX^{D_0(\sigma_0)}}{\prod_{i=1}^{r}X_i}\ll B(\log B)^{r-2}.
\end{align*} 

We next assume $\XX^{E_{\sigma_0}(j_0)}<\log B$, which implies \begin{equation}\label{eq:Xr+j0}
	X_{r+j_0}< \lambda_{j_0}\log B 
\end{equation} by \eqref{eq:nusigmaB1}. 
Let $\sigma_{j_0}\in\triangle_{\max}$ be the unique maximal cone adjacent to $\sigma_0$ (cf. \cite[Lemma 8.9]{Salberger}) such that $\sigma_{j_0}(1)\cap \sigma_0(1)=\sigma_0(1)\setminus\{\rho_{r+j_0}\}$.  Let $1\leqslant i_0\leqslant r$ be such that $\sigma_{j_0}(1)\setminus\sigma_0(1)=\{\rho_{i_0}\}$. Conditions \eqref{eq:nusigmaB1} \eqref{eq:sigmaa} together imply $$\lambda_{j_0}^{-1}\leqslant\frac{\XX^{E_{\sigma_0}(j_0)}}{X_{r+j_0}}< \lambda_{j_0}^{-1}\left(1+\frac{3}{X_{r+j_0}}\right)\leqslant 4\lambda_{j_0}^{-1}.$$  According to \cite[Remark 11.23]{Salberger}, we then have $$\frac{\XX^{D_0(\sigma_{j_0})}}{\XX^{D_0(\sigma_0)}}=\left(\XX^{F_{\sigma_0}(j_0)}\right)^{a_{r+j_0}(\sigma_{j_0})}=\left(\frac{X_{r+j_0}}{\XX^{E_{\sigma_0}(j_0)}}\right)^{a_{r+j_0}(\sigma_{j_0})}\asymp 1,$$ whether $$a_{r+j_0}(\sigma_{j_0}):=1+\langle -m_{D_0}(\sigma_{j_0}),n_{\rho_{r+j_0}}\rangle$$ is zero or not. 
On the other hand, note that $$\rho_{i_0}=-\sum_{1\leqslant j\leqslant d}b_{i_0}^{(j,\sigma_0)}\rho_{r+j}.$$ We necessarily have $b_{i_0}^{(j_0,\sigma_0)}=1$, thanks to the completeness and regularity of $\triangle$.  
Therefore, having fixed an $r$-tuple $(X_{r+j_0},X_i,i\neq i_0)$, conditions \eqref{eq:nusigmaB1} \eqref{eq:sigmaa} can be rewritten as
\begin{equation}\label{eq:aa}
	\lambda_{j_0}^{-1}X_{r+j_0}\prod_{i\neq i_0} X_i^{-b_{i}^{(j_0,\sigma_0)}}\leqslant X_{i_0}< \lambda_{j_0}^{-1}(X_{r+j_0}+3)\prod_{i\neq i_0} X_i^{-b_{i}^{(j_0,\sigma_0)}}.
\end{equation}
We may assume that the interval in \eqref{eq:aa} captures a non-zero integer (otherwise the sum is zero). Assuming this interval has length $>1$, we have
$$\sum_{X_{i_0}\in\BZ_{>0}:\eqref{eq:aa} \text{ holds}} \frac{1}{X_{i_0}}\ll \log\left(3+\frac{1}{X_{r+j_0}}\right)\ll 1,$$ and this bound remains true if this interval has length $<1$ since $X_{i_0}\geqslant 1$ is then uniquely determined.

We now put everything together. Note that $\XX^{D_0(\sigma_{j_0})}$ is a Laurent polynomial in $X_i,1\leqslant i\leqslant r,i\neq i_0$ and $X_{r+j_0}$. 
First fixing $X_{r+j_0},X_i,1\leqslant i\leqslant r$, summing over $X_{r+j},1\leqslant j\leqslant d,j\neq j_0$ and then using \eqref{eq:Xr+j0}, we obtain 
\begin{align*}
	\#\{\XX\in\nu_{j_0}^{(a)}(\CB_\infty;B):\XX^{E_{\sigma_0}(j_0)}<\log B\}\ll& \sum_{\substack{(X_{r+j_0},X_i,1\leqslant i\leqslant r)\in\BZ_{>0}^{r+1}\\ \XX^{D_0(\sigma)}\leqslant B,\max_{1\leqslant i\leqslant r}X_i\leqslant B\\ \eqref{eq:nusigmaB1}\eqref{eq:sigmaa}\text{ hold}, \XX^{E_{\sigma_0}(j_0)}<\log B}}\prod_{\substack{1\leqslant j\leqslant d\\ j\neq j_0}} \XX^{E_{\sigma_0}(j)}\\ \ll & \sum_{\substack{(X_{r+j_0},X_i,1\leqslant i\leqslant r)\in\BZ_{>0}^{r+1}\\ \XX^{D_0(\sigma)}\leqslant B,\max_{1\leqslant i\leqslant r}X_i\leqslant B\\ \eqref{eq:nusigmaB1}\eqref{eq:sigmaa}\text{ hold}, X_{r+j_0}\ll\log B}}\frac{\XX^{D_0(\sigma_0)}}{X_{r+j_0}\prod_{i=1}^{r}X_i}\\ \ll &\sum_{\substack{(X_{r+j_0},X_i,i\neq i_0)\in\BZ_{>0}^{r}\\ X^{D_0(\sigma_{j_0})}\ll B\\ \max_{i\neq i_0}X_i\leqslant B,X_{r+j_0}\ll\log B}}\frac{\XX^{D_0(\sigma_{j_0})}}{X_{r+j_0}\prod_{i\neq i_0}X_i}\sum_{\substack{X_{i_0}\in\BZ_{>0}\\\eqref{eq:aa} \text{ holds}}} \frac{1}{X_{i_0}}.
\end{align*}
If $a_{r+j_0}(\sigma_{j_0})=0$, applying Lemma \ref{le:sublemma}, we conclude that the sum above is
$$\ll S^{[r-1]}_{1,\aa_{*r+j_0}(\sigma_{j_0})}(B,1)\sum_{k\ll \log B}\frac{1}{k}\ll B(\log B)^{r-2}\log\log B.$$ If $a_{r+j_0}(\sigma_{j_0})>0$, again applying accordingly Lemma \ref{le:sublemma}, we estimate the sum above as follows:
$$\ll \sum_{k\ll \log B}k^{a_{r+j_0}(\sigma_{j_0})-1}S^{[r-1]}_{1,\aa_{*r+j_0}(\sigma_{j_0})}(B,k^{a_{r+j_0}(\sigma_{j_0})})\ll B(\log B)^{r-2}\sum_{k\ll \log B}\frac{1}{k}\ll B(\log B)^{r-2}\log\log B.$$
This finishes the proof. 
\end{proof}
 For any fixed $1\leqslant j_0\leqslant d$, we consider the set \begin{equation}\label{eq:CAsigmaj0}
	\CA_{j_0}^{\natural}(\CB_\infty;B):=\left\{\XX\in\CA(\CB_\infty;B):\XX^{F_\sigma(j_0)}=\lambda_{j_0}\right\},
\end{equation} so that $\bigcup_{j_0=1}^{d}	\CA_{j_0}^{\natural}(\CB_\infty;B)$ consists of points of $\CA_{\sigma_0}(B)$ lying on the boundary of $\CB_\infty$. Proposition \ref{prop:nusigmaWBa} admits the following consequence.
\begin{corollary}\label{co:partialCsigma}
	We have
	$$\#\CA_{j_0}^{\natural}(\CB_\infty;B)=O\left(B(\log B)^{r-2}\log\log B\right).$$
\end{corollary}
\begin{proof}
Clearly the condition $\XX^{F_\sigma(j_0)}=\lambda_{j_0}$ implies  \eqref{eq:nusigmaB1} \eqref{eq:sigmaa}. Hence
\begin{equation*}
	\#\CA_{j_0}^{\natural}(\CB_\infty;B)\leqslant \#\nu_{j_0}^{(a)}(\CB_\infty;B)+ \sum_{i=1}^{r}\#\CA_{\sigma_0}(B)_{i,1}=O\left(B(\log B)^{r-2}\log\log B\right).\qedhere
\end{equation*}
\end{proof}

For $\zz\in\BR_{\geqslant 2}^r$, we introduce the following notation \begin{equation}\label{eq:zzhat}
	\widehat{\zz}^{E_{\sigma_0}(j_0)}:=\prod_{\substack{1\leqslant i\leqslant r\\b_{i}^{(j_0,\sigma_0)}>0}}\left(z_i-1\right)^{b_{i}^{(j_0,\sigma_0)}}\prod_{\substack{1\leqslant i\leqslant r\\b_i^{(j_0,\sigma_0)}<0}}\left(z_i+1\right)^{b_{i}^{(j_0,\sigma_0)}}.
\end{equation}
Now let $\nu_{j_0}^{(b)}(\CB_\infty;B)$ be the subset of $\XX\in \CA(B)$ such that conditions \eqref{eq:nusigmaA} \eqref{eq:nusigmaB2} \eqref{eq:nusigmaB1} and
\begin{equation}\label{eq:sigmabb}
	X_{r+j_0}>\lambda_{j_0}\widehat{\XX}^{E_{\sigma_0}(j_0)}
\end{equation} hold but \eqref{eq:sigmaa} does not hold.
\begin{proposition}\label{prop:nusigmab}
	We have
	$$\#\nu_{j_0}^{(b)}(\CB_\infty;B)\ll B(\log B)^{r-2}.$$
\end{proposition}
\begin{proof}
	Having fixed an $r$-tuple $(X_1,\cdots,X_r)$, conditions \eqref{eq:nusigmaB1} and  \eqref{eq:sigmabb} imply that $X_{n+j_0}$ lies in the interval 
	$$\left]\lambda_{j_0}\widehat{\XX}^{E_{\sigma_0}(j_0)},\lambda_{j_0}\XX^{E_{\sigma_0}(j_0)}\right].$$ By the assumption opposed to \eqref{eq:sigmaa} we must have $$X_{r+j_0}+3\leqslant \lambda_{j_0} \XX^{E_{\sigma_0}(j_0)}.$$ So if this interval has length $<1$, then such an $X_{r+j_0}$ does not exist. We may therefore assume that it has length $>1$. Note that its length can be bounded from above by
	\begin{align*}
		\lambda_{j_0}\left(\XX^{E_{\sigma_0}(j_0)}-\widehat{\XX}^{E_{\sigma_0}(j_0)}\right)\ll \sum_{i_0=1}^{r}\sum_{i_0:b_{i_0}^{(j_0,\sigma_0)}\neq 0}X_{i_0}^{b_{i_0}^{(j_0,\sigma_0)}-1}\prod_{i\neq i_0}X_i^{b_{i}^{(j_0,\sigma_0)}}.
	\end{align*}
So for any fixed $(X_1,\cdots,X_r)$, the number of $d$-tuples $(X_{r+1},\cdots,X_{r+d})$ is at most
	\begin{equation*}
		\begin{split}
			\ll\prod_{j\neq j_0}\XX^{E_{\sigma_0}(j)}\sum_{i_0=1}^{r}\sum_{i_0:b_{i_0}^{(j_0,\sigma_0)}\neq 0}X_{i_0}^{b_{i_0}^{(j_0,\sigma_0)}-1}\prod_{i\neq i_0}X_i^{b_{i}^{(j_0,\sigma_0)}}= \prod_{j\neq j_0}\XX^{E_{\sigma_0}(j)}\sum_{i_0=1}^{r}\frac{\XX^{E_{\sigma_0}(j_0)}}{X_{i_0}}=\sum_{i_0=1}^{r}\frac{\XX^{D_0(\sigma_0)}}{X_{i_0}\prod_{i=1}^{r}X_i}.
	\end{split}\end{equation*}
	On recalling \eqref{eq:Sprimebd}, we therefore obtain
	\begin{align*}
	 \#\nu_{j_0}^{(b)}(\CB_\infty;B)\ll\sum_{i_0=1}^{r} S^{(1)}_{\sigma_0,i_0}(B)=O\left(B(\log B)^{r-2}\right),
	\end{align*}
	by Lemma  \ref{le:twospecsum2}. This finishes the proof.
\end{proof}

\subsection{Proof of Proposition \ref{co:CAsigmaWBCDWsigmaB}}
In comparing sums and integrals we frequently need to make use of the following kind of set. For every $1\leqslant k\leqslant n$, $\uu_k$ denotes the $n$-tuple of integers whose $k$-th coordinate is $1$ and and zero elsewhere. Now let us consider 
\begin{equation}\label{eq:deltaB}
	\begin{split}
		\delta(B):=\{\XX\in\CA(B):\text{ there exists } 1\leqslant k\leqslant n,\max_{\sigma\in\triangle_{\max}}(\XX+\uu_k)^{D_0(\sigma)}>B\}.	\end{split}
\end{equation}
\begin{lemma}[cf. \cite{Salberger}, Lemma 11.25 (b)]\label{le:deltaB}
	We have $$\#\delta(B)=O(B(\log B)^{r-2}).$$
\end{lemma}
With this lemma we can immediately prove Lemma \ref{le:exchangeABIB}.
Indeed, we need to cover the boundary introduced by the height condition by closed unit cubes with integral vertices (\emph{integral unit cubes} for short), at least one vertex of which lies in $\delta(B)$. We then have $$\left|\#\CA(B)-\CI(B)\right|\leqslant 2^n\#\delta(B)=O(B(\log B)^{r-2})$$ by Lemma \ref{le:deltaB}.

Let us write
\begin{equation}\label{eq:nuCB}
	\nu(\CB_\infty;B):=\bigcup_{j_0=1}^d \left(\nu_{j_0}^{(a)}(\CB_\infty;B)\cup \nu_{j_0}^{(b)}(\CB_\infty;B)\right).
\end{equation}
Then Propositions \ref{prop:nusigmaWBa} and \ref{prop:nusigmab} show that $$\nu(\CB_\infty;B)=O\left(B(\log B)^{r-2}\log\log B\right).$$

	The deviation between the cardinality of $\CA(\CB_\infty;B)$, i.e. the number of lattice points in $\CD(\CB_\infty;B)$, with the integral $\CI(\CB_\infty;B)$ is controlled by the integral unit cubes with at least one vertex lying in 
	$$\delta(B)\bigcup\nu(\CB_\infty;B)\bigcup \left(\bigcup_{i=1}^r\left(\CA_{\sigma_0}(B)_{i,1}\cup \CA_{\sigma_0}(B)_{i,2}\right)\right),$$ where we recall \eqref{eq:CAsigmaik}. 
	Admitting this, we have
	\begin{align*}
		\left|\#\CA(\CB_\infty;B)-\CI(\CB_\infty;B)\right| &\leqslant 2^n\left(\#\delta(B)+\#\nu(\CB_\infty;B)+\sum_{i=1}^{r}\left(\#\CA_{\sigma_0}(B)_{i,1}+\#\CA_{\sigma_0}(B)_{i,2}\right)\right)\\ &=O(B(\log B)^{r-2}\log\log B).
	\end{align*}

We fix $\YY_0=(Y_1,\cdots,Y_n)\in \CD(\CB_\infty;B)$ and $1\leqslant j_0\leqslant d$ such that $$Y_{r+j_0}=\lambda_{j_0}\YY_0^{E_{\sigma_0}(j_0)}\quad\text{and}\quad Y_i\geqslant 3\text{ for every }1\leqslant i\leqslant r,$$ and that it is not contained in any integral unit cube with at least one vertex in $\delta(B)$, that is, \begin{equation}\label{eq:conddeltaB}
	\{\XX\in\BZ_{>0}^n: |Y_k-X_k|\leqslant 1\text{ for all }1\leqslant k\leqslant n\}\subset \CA(B).
\end{equation} Let $\yy_{0}=(y_1,\cdots,y_n)\in\BZ_{>0}^n$ be defined by
\begin{equation}\label{eq:yi1}
	y_i=\begin{cases}
		\text{the unique integer in } [Y_i,Y_i+1[ & \text{ if } 1\leqslant i\leqslant r \text{ and } b_{i}^{(j_0,\sigma_0)}>0,\\ \text{the unique integer in } ]Y_i-1,Y_i] & \text{ if } 1\leqslant i\leqslant r \text{ and } b_{i}^{(j_0,\sigma_0)}\leqslant 0;
	\end{cases}
\end{equation}
\begin{equation}\label{eq:yr+j}
	y_{r+j}=\text{the unique integer in } ]Y_{r+j}-1,Y_{r+j}]\quad \text{if } 1\leqslant j\leqslant d,j\neq j_0;
\end{equation}
	\begin{equation}\label{eq:yr+j0}
		y_{r+j_0}=\begin{cases}
			\text{the unique integer in } [Y_{r+j_0},Y_{r+j_0}+1[ & \text{ if } \lambda_{j_0}\yy_0^{E_{\sigma_0}(j_0)}-\lambda_{j_0}\widehat{\yy_0}^{E_{\sigma_0}(j_0)}> 2\\ &~\text{ and } Y_{r+j_0}-1\leqslant \lambda_{j_0}\widehat{\yy_0}^{E_{\sigma_0}(j_0)};\\ \text{the unique integer in } ]Y_{r+j_0}-1,Y_{r+j_0}] & \text{ otherwise}.
		\end{cases}
	\end{equation}
	Clearly $\yy_0\in\CA(B)$ thanks to \eqref{eq:conddeltaB}.
	
	Let us now check that $\yy_0\in\nu(\CB_\infty;B)$. Clearly $y_i\geqslant 2,y_i-1<Y_i<y_i+1,1\leqslant i\leqslant r$ and hence \begin{equation}\label{eq:casestar}
	\begin{split}
		b_i^{(j,\sigma_0)}>0 &\Longrightarrow Y_i^{b_i^{(j,\sigma_0)}}<(y_i+1)^{b_i^{(j,\sigma_0)}}\leqslant 2^{b_i^{(j,\sigma_0)}} y_i^{b_i^{(j,\sigma_0)}};\\ b_i^{(j,\sigma_0)}<0 & \Longrightarrow Y_i^{b_i^{(j,\sigma_0)}}<(y_i-1)^{b_i^{(j,\sigma_0)}}\leqslant 2^{-b_i^{(j,\sigma_0)}}y_i^{b_i^{(j,\sigma_0)}}.
	\end{split}	
\end{equation} By \eqref{eq:conditionEsigmaW}	 and \eqref{eq:yr+j}, it follows that for every $j\neq j_0$ \begin{equation}\label{eq:jneqj0}
y_{r+j}\leqslant Y_{r+j}\leqslant\lambda_{j}\YY_0^{E_{\sigma_0}(j)}\leqslant\lambda_{j} 2^{\sum_{i=1}^{r}|b_i^{(j,\sigma_0)}|}\yy_0^{E_{\sigma_0}(j)},
\end{equation}  i.e., condition \eqref{eq:nusigmaB2} is verified. 

It remains to verify the conditions with respect to the index $j_0$. Recalling \eqref{eq:zzhat}, we observe that \begin{equation}\label{eq:casest2}
	\lambda_{j_0}\widehat{\yy_0}^{E_{\sigma_0}(j_0)}<Y_{r+j_0}=\lambda_{j_0}\YY^{E_{\sigma_0}(j_0)}\leqslant \lambda_{j_0}\yy_0^{E_{\sigma_0}(j_0)}.
\end{equation}  
	\begin{enumerate}
			\item Case $\lambda_{j_0}\yy_0^{E_{\sigma_0}(j_0)}-\lambda_{j_0}\widehat{\yy_0}^{E_{\sigma_0}(j_0)}\leqslant 2$.  It follows from  \eqref{eq:yr+j0} and \eqref{eq:casest2} that $$y_{r+j_0}\leqslant Y_{r+j_0}\leqslant\lambda_{j_0}\yy_0^{E_{\sigma_0}(j_0)},$$ and
		\begin{align*}
			y_{r+j_0}+3>Y_{r+j_0}+2&\geqslant Y_{r+j_0}+\lambda_{j_0}\yy_0^{E_{\sigma_0}(j_0)}-\lambda_{j_0}\widehat{\yy_0}^{E_{\sigma_0}(j_0)}\\ &=\lambda_{j_0}\yy_0^{E_{\sigma_0}(j_0)}+\lambda_{j_0}\YY^{E_{\sigma_0}(j_0)}-\lambda_{j_0}\widehat{\yy_0}^{E_{\sigma_0}(j_0)}>\lambda_{j_0}\yy_0^{E_{\sigma_0}(j_0)}.
		\end{align*}
		So in this case $\yy_0$ satisfies \eqref{eq:sigmaa} and hence $\yy_0\in \nu_{j_0}^{(a)}(\CB_\infty;B)$. 
		\item Case $\lambda_{j_0}\yy_0^{E_{\sigma_0}(j_0)}-\lambda_{j_0}\widehat{\yy_0}^{E_{\sigma_0}(j_0)}>2$.
\begin{itemize}
		\item If $Y_{r+j_0}-1\leqslant \lambda_{j_0}\widehat{\yy_0}^{E_{\sigma_0}(j_0)}$, then  by \eqref{eq:yr+j0} 
	 $$\lambda_{j_0}\widehat{\yy_0}^{E_{\sigma_0}(j_0)}<\lambda_{j_0}\YY^{E_{\sigma_0}(j_0)}=Y_{r+j_0}\leqslant y_{r+j_0}<Y_{r+j_0}+1\leqslant\lambda_{j_0}\widehat{\yy_0}^{E_{\sigma_0}(j_0)}+2<\lambda_{j_0}\yy_0^{E_{\sigma_0}(j_0)}.$$ 
		\item If $Y_{r+j_0}-1> \lambda_{j_0}\widehat{\yy_0}^{E_{\sigma_0}(j_0)}$, then by \eqref{eq:yr+j0} and \eqref{eq:casest2}, $$\lambda_{j_0}\widehat{\yy_0}^{E_{\sigma_0}(j_0)}<Y_{r+j_0}-1<y_{r+j_0}\leqslant Y_{r+j_0}= \lambda_{j_0}\YY^{E_{\sigma_0}(j_0)}\leqslant \lambda_{j_0}\yy_0^{E_{\sigma_0}(j_0)}.$$ 
	\end{itemize}  	
This shows that in both subcases $\yy_0$ satisfies \eqref{eq:sigmabb} and hence $\yy_0\in \nu_{j_0}^{(b)}(\CB_\infty;B)$ whenever \eqref{eq:sigmaa} is not verified.
	\end{enumerate} 
	
	We have shown that $\yy_0\in\nu(\CB_\infty;B)$.	This finishes the proof.
\qed

\subsection{Lattice points of bounded toric height in real neighbourhoods}
For every square-free integer $l\in\BZ_{>0}$ and for every residue $\bxi_l\in\CX_0(\BZ/l\BZ)$, we also define
\begin{equation}\label{eq:CAxilBinfty}
	\CA([\bxi_l,\CB_\infty];B):=\{\XX\in\CA(\CB_\infty;B):\XX\equiv \bxi_l\bmod l\},
\end{equation}
For $\dd_1=(d_{1,1},\cdots,d_{1,n}),\dd_2=(d_{2,1},\cdots,d_{2,n})\in\BZ_{>0}^n$, we write $\dd_1\mid \dd_2$ to mean that $d_{1,i}\mid d_{2,i}$ for every $1\leqslant i\leqslant n$.
Now for every $\dd\in\BZ_{>0}^n$, we define \begin{equation}\label{eq:CAdxilBinfty}
	\CA_{\dd}([\bxi_l,\CB_\infty];B):=\{\XX\in\CA([\bxi_l,\CB_\infty];B):\dd\mid \XX\},
\end{equation} and we write \begin{equation}\label{eq:Pid}
	\Pi(\dd):=\prod_{i=1}^{n}d_i.
\end{equation} 
By convention, $(\dd,l)=1$ means that $p\mid l\Rightarrow p\nmid d_i$ for every $1\leqslant i\leqslant n$.
As one of the main ingredients of proof of Proposition \ref{prop:eqdistCAinftyB}, the following generalises \cite[Lemma 11.26]{Salberger}, which relates the cardinality of $\CA(\CB_\infty;B)$ \eqref{eq:CACBinfty} with its subsets $\CA_{\dd}([\bxi_l,\CB_\infty];B)$.
\begin{proposition}\label{prop:CAxilBCAB}
	Uniformly for any pair $(l,\bxi_l)$ and $\dd\in\BZ_{>0}^n$ such that $(\dd,l)=1$, we have (recall \eqref{eq:Pid})
	$$l^n\Pi(\dd)\#\CA_{\dd}([\bxi_l,\CB_\infty];B)=\#\CA(\CB_\infty;B)+O(l^n\Pi(\dd) B(\log B)^{r-2}\log\log B).$$
\end{proposition}

\begin{proof}
	We perform a refinement of the proof of \cite[Lemma 11.26]{Salberger}, taking special care of the uniformity of dependency on $\dd$ and on $(l,\bxi_l)$. 
	As $X$ satisfies weak approximation, we may assume $\CA(\CB_\infty;B)\neq\varnothing$ and $\CA_{\dd}([\bxi_l,\CB_\infty];B)\neq\varnothing$.

	For every $\XX\in\BZ^n$ and $\dd_{0}=(d_{0,1},\cdots,d_{0,n})\in\BZ_{>0}^n$, we define the box
	$$\square_{\XX,\dd_0}:=\{\YY\in\BR_{>0}^n:\text{ for every }i,X_i\leqslant Y_i<X_i+d_{0,i}\}.$$
	And we write
	$$\CJ_1:=\bigcup_{\XX\in\CA_{\dd}([\bxi_l,\CB_\infty];B)} \square_{\XX,l\dd},\quad \text{with} \quad \CJ_2:=\bigcup_{\XX\in \CA(\CB_\infty;B)}\square_{\XX,\underline{\mathbf{1}}}.$$
	Since the boxes $(\square_{\XX,l\dd})_{\XX\in\CA_{\dd}([\bxi_l,\CB_\infty];B)}$ are mutually disjoint, the region $\CJ_1$ has area $l^n\Pi(\dd)\#\CA_{\dd}([\bxi_l,\CB_\infty];B)$ and $\CJ_2$ has area $\#\CA(\CB_\infty;B)$.
	
	Let us now compare $\CJ_1$ with $\CJ_2$.
	Write $$\blacksquare_{\XX,\dd_0}:=\{\YY\in\BR^n:\text{ for every }i,X_i-2d_{0,i}\leqslant Y_i\leqslant X_i+2d_{0,i}\}.$$ 
	Firstly, we have
	$$\CJ_1\subset \CJ_2\bigcup\left(\bigcup_{\XX\in\delta^{[l\dd]}(B)\cup\nu^{[l\dd]}(\CB_\infty;B)}\blacksquare_{\XX,\one}\right)$$ where, on recalling $\delta(B)$ \eqref{eq:deltaB} and $\nu(\CB_\infty;B)$ \eqref{eq:nuCB}, $$\delta^{[l\dd]}(B):=\{\XX\in\BZ^n:\blacksquare_{\XX,l\dd}\cap\delta(B)\neq\varnothing\},$$ and $$\nu^{[l\dd]}(\CB_\infty;B):=\{\XX\in\BZ^n:\blacksquare_{\XX,l\dd}\cap\nu(\CB_\infty;B)\neq\varnothing\}.$$ 
	Indeed, as we have seen in the proof of Proposition \ref{co:CAsigmaWBCDWsigmaB}, the number of boxes intersecting the boundary can be controlled by $\delta(B)\cup\nu(\CB_\infty;B)$. Furthermore, if any box $\square_{\XX,l\dd}$ does not hit the boundary of $\CD(\CB_\infty;B)$, then clearly $\square_{\XX,l\dd}\cap\BZ^n\subset\CA(\CB_\infty;B)$ and hence  $\square_{\XX,l\dd}=\bigcup_{\XX'\in \square_{\XX,l\dd}\cap\BZ^n} \square_{\XX',\one}\subset \CJ_2$. 
	On the other hand, we similarly have
	$$\CJ_2\subset \CJ_1\bigcup\left(\bigcup_{\XX\in\delta^{[l\dd]}(B)\bigcup \nu^{[l\dd]}(\CB_\infty;B)\bigcup\left( \cup_{i=1}^{r}\cup_{k=1}^{l\max_{1\leqslant i\leqslant n} d_i} \CA_{\sigma_0}(B)_{i,k}\right)}\blacksquare_{\XX,\one}\right),$$
	where we recall $\CA_{\sigma_0}(B)_{i,k}$ \eqref{eq:CAsigmaik}.
	
	To estimate $\#\delta^{[l\dd]}(B)$, we note that for every $\XX\in\delta^{[l\dd]}(B)$, there exists $\ee=(e_1,\cdots,e_n)\in\BZ^n$ with $|e_i|\leqslant 2ld_i$ such that $\XX+\ee\in\delta(B)$. So using Lemma \ref{le:deltaB}, 
	$$\#\delta^{[l\dd]}(B)\leqslant 4^n\Pi(l\dd)\#\delta(B)=O(l^n\Pi(\dd)B(\log B)^{r-2}).$$
	
	Similarly, to estimate $\#\nu^{[l\dd]}(\CB_\infty;B)$, for every $\XX\in\nu^{[l\dd]}(\CB_\infty;B)$, there exists $\ff=(f_1,\cdots,f_n)\in\BZ^n$ such that $|f_i|\leqslant 2ld_i$ and $\XX+\ff\in \nu(\CB_\infty;B)$. By Propositions \ref{prop:nusigmaWBa} and \ref{prop:nusigmab}, we similarly have
	$$\#\nu^{[l\dd]}(\CB_\infty;B)\leqslant 4^n \Pi(l\dd)\#\nu(\CB_\infty;B)=O(l^n\Pi(\dd)B(\log B)^{r-2}\log\log B).$$
	
	On using again Lemma \ref{co:CAsigmaikbd}, we finally obtain
	\begin{align*}
		&\left|l^n\Pi(\dd)\#\CA_{\dd}([\bxi_l,\CB_\infty];B)-\#\CA(\CB_\infty;B)\right|\\ \leqslant &4^n\left(\#\delta^{[l\dd]}(B)+\#\nu^{[l\dd]}(\CB_\infty;B)+\sum_{i=1}^{r}\sum_{k=1}^{l\max_{1\leqslant i\leqslant n} d_i}\#\CA_{\sigma_0}(B)_{i,k}\right)\\ =&O(l^n\Pi(\dd)B(\log B)^{r-2}\log\log B).
	\end{align*}
	This finishes the proof of Proposition \ref{prop:CAxilBCAB}. 
\end{proof}

\subsection{Proof of Proposition \ref{prop:eqdistCAinftyB}}
	We recall our convention that $(\dd,l)=1$ means $p\mid l\Rightarrow p\nmid d_i,1\leqslant i\leqslant n$, and we write $(\dd,l)>1$ if there exist $p$ and $d_i$ such that $p\mid (d_i,l)$.
	
	We have, on taking $\dd=\underline{\mathbf{1}}$ in Proposition  \ref{prop:CAxilBCAB} \begin{equation}\label{eq:s1}
		l^n\#\CA([\bxi_l,\CB_\infty];B)=\#\CA(\CB_\infty;B)+O(l^n B(\log B)^{r-2}\log\log B),
	\end{equation} and then we replace the term $\#\CA(\CB_\infty;B)$ in Proposition \ref{prop:CAxilBCAB} on inserting \eqref{eq:s1} and divide both side by $l^n$ to obtain, uniformly for all $(l,\bxi_l)$ and any $\dd\in\BZ_{>0}^n$ such that $(\dd,l)=1$,
	\begin{equation}\label{eq:s2}
		\Pi(\dd)\#\CA_{\dd}([\bxi_l,\CB_\infty];B)=\#\CA([\bxi_l,\CB_\infty];B)+O(\Pi(\dd) B(\log B)^{r-2}\log\log B).
	\end{equation}
	
	We denote by $\mu_X$ the generalised Möbius function defined in \cite[p. 234]{Salberger}. It is characterised by the property that, for every $\dd=(d_1,\cdots,d_n)\in\BZ_{>0}^n$,
	$$\sum_{\dd^\prime\mid \dd}\mu_X(\dd^\prime)=1\Longleftrightarrow \gcd_{\sigma\in\triangle_{\max}}(\dd^{D_0(\sigma)})=1.$$
	Note that $\mu_X(\dd)=0$ if there exist $p$  and $d_i$ such that $p^2\mid d_i$.
	For every prime $p$, according to \cite[(11.14)]{Salberger}, the $p$-adic density $\kappa_p$ \eqref{eq:kappa} has the expression
	\begin{equation}\label{eq:kappanuX}
		\kappa_p=\sum_{\substack{(e_1,\cdots,e_n)\in\BZ_{\geqslant 0}^n\\\dd=(p^{e_1},\cdots,p^{e_n})}}\frac{\mu_X(\dd)}{\Pi(\dd)}.
	\end{equation}
	Recall that $\alpha_0$ is the smallest integer such that there exist $\alpha_0$ rays in $\triangle(1)$ which does not generate a cone in $\triangle$. We then have
	\begin{lemma}[\cite{Salberger} Lemma 11.19]\label{le:summud}
		The formal series $$\sum_{\dd\in\BZ_{>0}^n}\frac{|\mu_X(\dd)|}{\Pi(\dd)^s}$$ is convergent for $s>\frac{1}{\alpha_0}$. Consequently, we have
		\begin{itemize}
			\item $\sum_{\dd\in\BZ_{>0}^n:\Pi(\dd)\leqslant b}|\mu_X(d)|=O_\varepsilon(b^{\frac{1}{\alpha_0}+\varepsilon})$,
			\item $\sum_{\dd\in\BZ_{>0}^n:\Pi(\dd)> b}\frac{|\mu_X(d)|}{\Pi(\dd)}=O_\varepsilon(b^{\frac{1}{\alpha_0}-1+\varepsilon})$.
		\end{itemize}
	\end{lemma}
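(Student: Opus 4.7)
The plan is to reduce everything to an Euler product estimate, then extract the two consequences by Rankin's trick. First I would use the defining property of $\mu_X$ to establish that $\mu_X$ is multiplicative across coprime parts, i.e.\ $\mu_X(\dd_1\dd_2)=\mu_X(\dd_1)\mu_X(\dd_2)$ whenever $\gcd(\Pi(\dd_1),\Pi(\dd_2))=1$. This is immediate from Möbius inversion on the product poset $\BN_{\neq 0}^n$ together with the multiplicativity across primes of the indicator function $\dd\mapsto \mathds{1}[\gcd_{\sigma}\XX^{D_0(\sigma)}=1]$. Combined with the hypothesis $\mu_X(\dd)=0$ when $p^2\mid d_i$, this reduces the entire analysis to computing $\mu_X$ on tuples of the form $\dd=(p^{e_\varrho})_{\varrho\in\triangle(1)}$ with $e_\varrho\in\{0,1\}$, which I identify with the subset $S:=\{\varrho:e_\varrho=1\}\subset\triangle(1)$.

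Next I analyze the single-prime function $\nu_p(S):=\mu_X((p^{e_\varrho}))$. Writing $T_\sigma:=\triangle(1)\setminus\operatorname{supp}(D_0(\sigma))$, one observes that at the prime $p$, the condition $p\nmid\dd^{D_0(\sigma)}$ is equivalent to $S\subset T_\sigma$, so the defining identity reads $\sum_{S'\subset S}\nu_p(S')=\mathds{1}[\exists\,\sigma\in\triangle_{\max},\ S\subset T_\sigma]$. Call the right-hand side $f(S)$ and note that the collection $\{S:f(S)=1\}$ is downward closed. The key combinatorial claim is that $\nu_p(S)$ vanishes whenever $f(S)=1$ (including all its subsets, by downward closure): this follows from the Boolean Möbius identity $\sum_{S'\subset S}(-1)^{|S\setminus S'|}=0$ for $S\neq\varnothing$. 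Consequently $\nu_p(S)\neq 0$ forces $S\not\subset T_\sigma$ for every $\sigma\in\triangle_{\max}$; since $\sigma(1)\subset T_\sigma$, this in turn forces $S$ to fail to be contained in any cone of $\triangle$, hence $|S|\geqslant\alpha_0$ by definition. Finally $|\nu_p(S)|\leqslant 2^{|S|}$ by the trivial bound on the Möbius inversion sum.

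Combining these observations via multiplicativity, I write, for $s>0$,
\begin{equation*}
\sum_{\dd\in\BN_{\neq 0}^n}\frac{|\mu_X(\dd)|}{\Pi(\dd)^s}=\prod_{p}\left(1+\sum_{\substack{S\subset\triangle(1)\\ |S|\geqslant\alpha_0}}\frac{|\nu_p(S)|}{p^{s|S|}}\right)\leqslant\prod_p\left(1+O\!\left(p^{-s\alpha_0}\right)\right),
\end{equation*}
where the implied constant depends only on $\triangle$. This Euler product converges absolutely precisely when $s\alpha_0>1$, giving the first assertion. The two consequences follow from the standard Rankin trick: picking $s=\tfrac{1}{\alpha_0}+\varepsilon$, one has $\sum_{\Pi(\dd)\leqslant b}|\mu_X(\dd)|\leqslant b^{s}\sum_{\dd}|\mu_X(\dd)|\Pi(\dd)^{-s}=O_\varepsilon(b^{1/\alpha_0+\varepsilon})$, and similarly $\sum_{\Pi(\dd)>b}|\mu_X(\dd)|\Pi(\dd)^{-1}\leqslant b^{s-1}\sum_{\dd}|\mu_X(\dd)|\Pi(\dd)^{-s}=O_\varepsilon(b^{1/\alpha_0-1+\varepsilon})$.

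The main obstacle I anticipate is the middle step: correctly setting up the Möbius inversion at each prime and verifying that $\nu_p(S)$ vanishes throughout the downward-closed family associated to the cones. The subtlety lies in going from ``$S$ not contained in any $T_\sigma$'' to ``$S$ not contained in any cone of $\triangle$''; the inclusion $\sigma(1)\subset T_\sigma$ makes one direction free, so the minimality of $\alpha_0$ (rays that fail to generate a cone) is exactly the right combinatorial invariant controlling the convergence abscissa. Once this structural fact is in place, the Euler product comparison and Rankin's trick are entirely routine.
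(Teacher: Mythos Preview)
The paper does not give its own proof of this lemma; it is stated with a citation to \cite[Lemma 11.19]{Salberger} and used as a black box. Your argument is correct and is essentially the standard one: factor into an Euler product via multiplicativity, show that the local Möbius value $\nu_p(S)$ vanishes unless $S$ fails to lie in any cone (hence $|S|\geqslant\alpha_0$), bound the local factor by $1+O(p^{-s\alpha_0})$, and then deduce the two tail estimates by Rankin's trick. One small clarification worth making explicit in your write-up: the step from ``$S\not\subset T_\sigma$ for every maximal $\sigma$'' to ``$S$ lies in no cone of $\triangle$'' uses not only $\sigma(1)\subset T_\sigma$ but also that every cone of $\triangle$ is a face of some maximal cone (completeness of the fan), so that $\tau(1)\subset\sigma(1)\subset T_\sigma$ for suitable $\sigma$; you allude to this in the final paragraph but it deserves to be stated in the body of the argument.
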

	
	By the Möbius inversion, we have 
	\begin{align*}
		\#\CC_0([\bxi_l,\CB_\infty];B)^+&=\sum_{\dd\in\BZ_{>0}^n}\mu_X(\dd)\#\CA_{\dd}([\bxi_l,\CB_\infty];B)\\ &=\left(\sum_{\dd\in\BZ_{>0}^n:(\dd,l)=1}+\sum_{\dd\in\BZ_{>0}^n:(\dd,l)>1}\right)\mu_X(\dd)\#\CA_{\dd}([\bxi_l,\CB_\infty];B).
	\end{align*} 
	
	Recall $\kappa_{(l)}$ \eqref{eq:kappal} and the expression \eqref{eq:kappanuX}. We have $$\kappa_{(l)}=\frac{1}{l^n}\left(\sum_{\dd\in\BZ_{>0}^n:(\dd,l)=1}\frac{\mu_X(\dd)}{\Pi(\dd)}\right).$$ So by \eqref{eq:s1},
	\begin{align*}
		\kappa_{(l)}\#\CA(\CB_\infty;B)=&\left(\sum_{\dd\in\BZ_{>0}^n:(\dd,l)=1}\frac{\mu_X(\dd)}{\Pi(\dd)}\right)\frac{\#\CA(\CB_\infty;B)}{l^n}\\ =&\left(\sum_{\dd\in\BZ_{>0}^n:(\dd,l)=1}\frac{\mu_X(\dd)}{\Pi(\dd)}\right)\left(\#\CA([\bxi_l,\CB_\infty];B)+O(B(\log B)^{r-2}\log\log B)\right)\\ =&\left(\sum_{\dd\in\BZ_{>0}^n:(\dd,l)=1}\frac{\mu_X(\dd)}{\Pi(\dd)}\right)\#\CA([\bxi_l,\CB_\infty];B)+O_\varepsilon\left(l^\varepsilon B(\log B)^{r-2}\log\log B\right),
	\end{align*} on using \eqref{eq:LWkappa}.
	Moreover, using \eqref{eq:s2} and Lemma \ref{le:summud} we have 
	\begin{align*}
		&\sum_{\dd\in\BZ_{>0}^n:(\dd,l)=1}\mu_X(\dd)\#\CA_{\dd}([\bxi_l,\CB_\infty];B)-\left(\sum_{\dd\in\BZ_{>0}^n:(\dd,l)=1}\frac{\mu_X(\dd)}{\Pi(\dd)}\right)\#\CA([\bxi_l,\CB_\infty];B)\\ \leqslant&\sum_{\substack{\dd\in\BZ_{>0}^n\\\Pi(\dd)\leqslant \log B}}|\mu_X(\dd)|\left|\#\CA_{\dd}([\bxi_l,\CB_\infty];B)-\frac{\#\CA([\bxi_l,\CB_\infty];B)}{\Pi(\dd)}\right|+\#\CA(B)\sum_{\substack{\dd\in\BZ_{>0}^n\\\Pi(\dd)>\log B}}\frac{|\mu_X(\dd)|}{\Pi(\dd)}\\ \ll_\varepsilon& B(\log B)^{r-2+\frac{1}{\alpha_0}+\varepsilon},
	\end{align*} where for the sum over $\dd$ with $\Pi(\dd)>\log B$, we have used the upper bounds (cf. \cite[Lemma 11.26]{Salberger})
	$$\Pi(\dd)\#\CA_{\dd}([\bxi_l,\CB_\infty];B)\leqslant \Pi(\dd)\#\CA_{\dd}(B)\leqslant \#\CA(B),\quad \#\CA([\bxi_l,\CB_\infty];B)\leqslant \#\CA(B).$$
	We therefore obtain
	\begin{align*}
		&\sum_{\dd\in\BZ_{>0}^n:(\dd,l)=1}\mu_X(\dd)\#\CA_{\dd}([\bxi_l,\CB_\infty];B)-\kappa_{(l)}\#\CA(\CB_\infty;B)=O_\varepsilon(l^\varepsilon B(\log B)^{r-2+\frac{1}{\alpha_0}+\varepsilon}).
	\end{align*}
	
	To finish the proof, we are now going to show $$\sum_{\dd\in\BZ_{>0}^n:(\dd,l)>1}\mu_X(\dd)\#\CA_{\dd}([\bxi_l,\CB_\infty];B)=0.$$ 
	It suffices to consider $\dd=(d_1,\cdots,d_n)\in\BZ_{>0}^n$ such that each $d_i$ is square-free (otherwise $\mu_X(\dd)=0$), a condition which we denote by ``$\dd~\square$-free'' by a slight abuse of terminology. For every such $\dd$ with $(\dd,l)>1$, write $\dd=\dd_0\dd_{(l)}$ be the unique factorisation such that $\dd_0$ is maximal among $\ee\in\BZ_{>0}$ with $\ee\mid \dd$ and $(\ee,l)=1$.
	Note that $(\dd,l)>1$ implies that $\dd_{(l)}\neq \underline{\mathbf{1}}=(1,\cdots,1)$. 
	By the Chinese remainder theorem, we can lift $\bxi_l$ to $\Xi_l\in\CX_0(\BZ)$. It then satisfies $\gcd_{\sigma\in\triangle_{\max}}(\Xi_l^{D_0(\sigma)})=1$ (recall \eqref{eq:coprime}). Hence we have $$1=\sum_{\dd\in\BZ_{>0}^n:\dd\mid \Xi_l}\mu_X(\dd).$$ Since $\mu_X(\underline{\mathbf{1}})=1$, this implies $$\sum_{\substack{\dd\in\BZ_{>0}^n\\\dd\mid \Xi_l,\dd\neq\underline{\mathbf{1}}}}\mu_X(\dd)=0.$$  
	We further observe that, since every component of $\dd$ (and hence $\dd_{(l)}$) is square-free, $\dd_{(l)}\mid (l,\cdots,l)$ and $$\CA_{\dd}([\bxi_l,\CB_\infty];B)\neq\varnothing\Longrightarrow\dd_{(l)}\mid \Xi_l.$$ Moreover, under this assumption we have $$\CA_{\dd_0\dd_{(l)}}([\bxi_l,\CB_\infty];B)=\CA_{\dd_0}([\bxi_l,\CB_\infty];B).$$
	We can now compute:
	\begin{align*}
		&\sum_{\dd\in\BZ_{>0}^n:(\dd,l)>1}\mu_X(\dd)\#\CA_{\dd}([\bxi_l,\CB_\infty];B)\\=&\sum_{\substack{\dd\in\BZ_{>0}^n,\dd~\square-\text{free}\\ (\dd,l)>1,\dd_{(l)}\mid \Xi_l}}\mu_X(\dd)\#\CA_{\dd}([\bxi_l,\CB_\infty];B)\\=&\sum_{\substack{\dd_0\in\BZ_{>0}^n\\\dd_0~\square-\text{free},(\dd_0,l)=1}}\sum_{\substack{\dd_{(l)}\in\BZ_{>0}^n,\dd_{(l)}~\square-\text{free}\\\dd_{(l)}\neq\underline{\mathbf{1}},\dd_{(l)}\mid \Xi_l}}\mu_X(\dd_0\dd_{(l)})\#\CA_{\dd_0\dd_{(l)}}([\bxi_l,\CB_\infty];B)\\
		=&\sum_{\dd_0\in\BZ_{>0}^n:(\dd_0,l)=1}\mu_X(\dd_0)\#\CA_{\dd_0}([\bxi_l,\CB_\infty];B)\sum_{\substack{\dd_{(l)}\in\BZ_{>0}^n\\\dd_{(l)}\neq\underline{\mathbf{1}},\dd_{(l)}\mid \Xi_l}}\mu_X(\dd_{(l)})=0,
	\end{align*}
	on observing that the sums are in fact all finite.
\qed

\subsection{Proof of Theorem \ref{thm:effectiveEEUT}}

Since our toric variety is over $\BQ$ and split, we have $$X(\RA_\BQ)^{\operatorname{Br}}=X(\RA_\BQ),\quad \beta(X)=1,\quad \tau(\CTNS)=1,\quad W(\CTNS)=\#\FTNS(\BZ)=2^r.$$

Let us start by fixing throughout $\CF_\infty(\sigma_0)\in\mathfrak{F}$ with respect to a fixed $\sigma_0\in\triangle_{\max}$ and a congruence neighbourhood $\CE^{\CX_0}(l;\Xi_l)\subset \CX_0(\widehat{\BZ})$ of level $l\in\BZ_{>0}$ associated to $\Xi_l=(\Xi_p)_{p\mid l}\in\prod_{p\mid l}\CX_0(\BZ_p)$. By the action of $\CT_{O}(\BR)/\CT_{O}(\BR)^+$, we may assume that $\CF_\infty(\sigma_0)$ is a standard cube of the form \eqref{eq:standardcube}.

We now express the leading term in Theorem \ref{thm:equidistrcong} as the Tamagawa measure of the corresponding adelic neighbourhood. By Lemma \ref{le:modelmeasurecomp} and Theorem \ref{thm:nonarchTmeas}, we have, on recalling the constant $\kappa_{(l)}$ \eqref{eq:kappal},
$$\widehat{\omega}_{\infty}^{X_0}(\CE^{\CX_0}(l;\Xi_l))=\prod_{p\mid l}\omega^{X_0}_{p}(\CE_p^{\CX_0}(l;\Xi_p))\times \prod_{p\nmid l}\omega^{X_0}_{p}(\CX_0(\BZ_p))=\kappa_{(l)}.$$
We first assume that $\CF_\infty(\sigma_0)=\CB_\infty$ is a zero-cube of the form \eqref{eq:Binfty}. By Theorem \ref{thm:realTamagawameas}, we have $$\omega^X_{\operatorname{tor},\infty}(\CB_\infty)=\prod_{j=1}^{d}\lambda_{j}.$$ 

Recall the counting function \eqref{eq:UTcounting} regarding \eqref{eq:Thetai}. We let $\bxi_l\in \CX_0(\BZ/l\BZ)$ be the image of $\Xi_l$ under the reduction modulo $l$ map
$$\prod_{p\mid l}\CX_0(\BZ_p)\to\prod_{p\mid l}\CX_0(\BZ/p^{\operatorname{ord}_p(l)}\BZ)\simeq\CX_0(\BZ/l\BZ).$$ Theorem \ref{thm:equidistrcong} now implies that as $B\to\infty$ depending on $\CB_\infty$,
\begin{equation}\label{eq:stepCB}
	\begin{split}
	\CN_{\CX_0}(\CB_\infty,\CE^{\CX_0}(l;\Xi_l);B)=&\frac{\#\FTNS(\BZ)}{\alpha(X)B(\log B)^{r-1}}\#\CC_0([\bxi_l,\CB_\infty];B)^+\\
=&2^{r}\omega^X_{\operatorname{tor},\infty}(\CB_\infty)\widehat{\omega}_{\infty}^{X_0}(\CE^{\CX_0}(l;\Xi_l))+O_{\CB_\infty,\varepsilon}(l^\varepsilon (\log B)^{-1+\frac{1}{\alpha_0}+\varepsilon}).\end{split}
\end{equation}

In general by inclusion-exclusion, recalling the set \eqref{eq:CAsigmaj0}, there exists a family of zero-cubes $\left\{(\CB_{k_m}^{m})_{k_m\leqslant \binom{d}{m}},0\leqslant m\leqslant d\right\}$, the number of which is $\leqslant \sum_{m=0}^d\binom{d}{m}=2^d$, such that $$\omega^X_{\operatorname{tor},\infty}(\CF_\infty(\sigma_0))=\sum_{m=0}^d(-1)^m\sum_{k_m\leqslant \binom{d}{m}}\omega^X_{\operatorname{tor},\infty}(\CB_{k_m}^{m}).$$
Now on writing $\CF^m_{k_m}:=\CB_{k_m}^{m}\times\CE^{\CX_0}(l;\Xi_l)$ for short, it follows from \eqref{eq:stepCB} and Corollary \ref{co:partialCsigma}, allowing the implied constant to depend on the family above and hence on $\CF_\infty(\sigma_0)$ that
\begin{align*}
	&\CN_{\CX_0}(\CF_\infty(\sigma_0),\CE^{\CX_0}(l;\Xi_l);B)\\ =&\sum_{m=0}^d(-1)^m\sum_{k_m\leqslant \binom{d}{m}}\CN_{\CX_0}(\CF^m_{k_m};B)+O\left((B(\log B)^{r-1})^{-1}\sum_{j_0=1}^{d}\sum_{m=0}^d\sum_{k_m\leqslant \binom{d}{m}}\#\CA_{j_0}^{\natural}(\CB_{k_m}^{m};B)\right)\\ =& 2^{r}\omega^X_{\operatorname{tor},\infty}(\CF_\infty(\sigma_0))\widehat{\omega}_{\infty}^{X_0}(\CE^{\CX_0}(l;\Xi_l))+O_{\CF_\infty(\sigma_0),\varepsilon}(l^\varepsilon (\log B)^{-1+\frac{1}{\alpha_0}+\varepsilon}). 
\end{align*}
We recall that $\alpha_0\geqslant 2$, so $-1+\frac{1}{\alpha_0}\leqslant -\frac{1}{2}$. From this we obtain the desired error term. \qed

\begin{remark}\label{rmk:effectiveCFinfty}
	We would like to point out that the dependency on $\CB_\infty$ in the error term of Theorem \ref{thm:equidistrcong} and (hence in Theorem \ref{thm:mainequidist}) can be made explicit (by modifying appropriately the proof of Propositions \ref{prop:nusigmaWBa} and \ref{prop:nusigmab}). In principle we should be able to work out the dependency on a general real continuous set via exploiting a ``good covering'' of it. We choose not to work out details in the current article, as it would involve covering lemmas from geometric measure theory (see e.g. \cite[p. 110]{Folland}), which is far from our initial scope.
\end{remark}

\section{Toric van der Corput method}\label{se:toricvandercorput}
In this section, let $X$ be a smooth proper split toric variety over $\BQ$ defined by a complete regular fan $\triangle$ with globally generated anticanonical line bundle $K_X^{-1}$. Let $X_0$ be the principal universal torsor over $\BQ$ with the morphism $\pi:X_0\to X$ as in \eqref{eq:morptoric}.  
For every $\sigma\in\triangle_{\max}$ with an admissible ordering, we recall \eqref{eq:CAsigmaB}, and for every $A\geqslant 1$ we let 
\begin{equation}\label{eq:CAsigmaAB}
	\CA^{(A)}_\sigma(B):=\{\XX\in\CA_\sigma(B):\text{ for every }1\leqslant j\leqslant d, \XX^{E_\sigma(j)}\geqslant (\log B)^A\},
\end{equation}  
Our goal in this section is to establish the following, which shows that the difference between the sets $\CA_\sigma(B),\CA^{(A)}_\sigma(B)$ is negligible.
\begin{theorem}\label{thm:CABCAAB}
	For every $\sigma\in\triangle_{\max}$, we have $$\#\left(\CA_\sigma(B)\setminus\CA^{(A)}_\sigma(B)\right)=O_A(B(\log B)^{r-2}\log\log B).$$
\end{theorem}	

\subsection{Ingredients of the proof of Theorem \ref{thm:CABCAAB}}
Our goal of this subsection is to state Propositions \ref{prop:keyprop2} and \ref{prop:keyprop1}, and to deduce Theorem \ref{thm:CABCAAB} based on them.

Recall \eqref{eq:CDB} \eqref{eq:CIB}. For every $\sigma\in\triangle_{\max}$, let (cf. \cite[11.31]{Salberger})
\begin{equation}\label{eq:CDsigmaB}
		\CD_\sigma(B):=\CD(B)\cap \pi^{-1}(C_{\sigma}(\BR)), 
\end{equation}
\begin{equation}\label{eq:CIsigmaB}
	\CI_\sigma(B):=\int_{\CD_\sigma(B)}\YY^{D_0(\sigma)}\operatorname{d}\omega^{X_0}_{\infty}(\YY)=\int_{\CD_\sigma(B)}\operatorname{d}\xx.
\end{equation} 
Now we introduce certain subsets of $\CD_\sigma(B)$ with more complicated conditions similar to \eqref{eq:CAsigmaAB}. For every $A\geqslant 1$, we define 
\begin{equation}\label{eq:CDsigmaAB}
	\CD_\sigma^{(A)}(B):=\{\YY\in\CD_\sigma(B):\text{ for every } 1\leqslant j\leqslant d,\YY^{E_\sigma(j)}\geqslant (\log B)^A\},
\end{equation} \begin{equation}\label{eq:CIsigmaAB}
	\CI_\sigma^{(A)}(B):=\int_{\CD_\sigma^{(A)}(B)}\YY^{D_0(\sigma)}\operatorname{d}\omega^{X_0}_{\infty}(\YY)=\int_{\CD_\sigma^{(A)}(B)}\operatorname{d}\xx.
\end{equation}

\begin{proposition}\label{prop:keyprop2}
	For every $\sigma\in\triangle_{\max}$, uniformly for every $A\geqslant 1$, we have  $$\#\CA_\sigma^{(A)}(B)=\CI_\sigma^{(A)}(B)+O(B(\log B)^{r-2}\log\log B).$$ 
\end{proposition}

\begin{proposition}\label{prop:keyprop1}
	For every $\sigma\in\triangle_{\max}$, we have
	$$\CI_\sigma(B)-\CI_\sigma^{(A)}(B)=O_A(B(\log B)^{r-2}\log\log B).$$
\end{proposition}

\begin{proof}[Proof of Theorem \ref{thm:CABCAAB} assuming Propositions \ref{prop:keyprop2} and \ref{prop:keyprop1}]
	Thanks to
	\begin{align*}
		\#\left(\CA_\sigma(B)\setminus\CA_\sigma^{(A)}(B)\right)&=\#\CA_\sigma(B)-\#\CA_\sigma^{(A)}(B)\\ &\leqslant \left|\#\CA_\sigma(B)-\CI_\sigma(B)\right|+\left(\CI_\sigma(B)-\CI_\sigma^{(A)}(B)\right)+\left|\CI_\sigma^{(A)}(B)-\#\CA_\sigma^{(A)}(B)\right|,
	\end{align*}
	the desired estimate is a direct consequence of the following comparison results: Proposition \ref{co:CAsigmaWBCDWsigmaB} (between $\#\CA_\sigma(B)$ and $\CI_\sigma(B)$ on taking $\CB_\infty=\prod_{j=1}^{d}]0,1]$), Proposition \ref{prop:keyprop1} (between $\CI_\sigma(B)$ and $\CI_\sigma^{(A)}(B)$) and Proposition \ref{prop:keyprop2} (between $\CI_\sigma^{(A)}(B)$ and $\#\CA_\sigma^{(A)}(B)$).
\end{proof}

\begin{remark}
	The comparison between $\#\CA_\sigma(B)$ and $\#\CA^{(A)}_\sigma(B)$ is (indirectly) proceeded through the auxiliatory integral  $\CI_\sigma^{(A)}(B)$. The proof of Proposition \ref{prop:keyprop2} (comparing $\#\CA_\sigma(B)$ with $\CI_\sigma^{(A)}(B)$) given in \S\ref{se:bdsumvandercorput} shares similarities with that of Proposition \ref{co:CAsigmaWBCDWsigmaB}. While the proof of Proposition \ref{prop:keyprop1} (comparing $\CI_\sigma(B)$ with $\CI_\sigma^{(A)}(B)$) given in \S\ref{se:intvandercorput} allows to take advantage of the toric geometry. That is why we term this ``toric van der Corput method''.
\end{remark}

\subsection{Estimate of a boundary sum}\label{se:bdsumvandercorput}
We introduce the following set related to the boundary condition in \eqref{eq:CAsigmaAB}. Recall notation \eqref{eq:zzhat}. Let us fix $\sigma_0\in\triangle_{\max}$ and $1\leqslant j_0\leqslant d$.
For any $A\geqslant 1$, let $\CG_{\sigma_0,j_0}^{(A)}(B)$ be the set consisting of $\XX\in\CA(B)$ satisfying
$$X_{i}>1\quad \text{for every } i\in\{1,\cdots,r\};$$
\begin{equation}\label{eq:CGsigmaA}
	X_{r+j_0}\leqslant \XX^{E_{\sigma_0}(j_0)} ,\quad X_{r+j}\leqslant 2^{\sum_{i=1}^{r}|b_i^{(j,\sigma_0)}|}\XX^{E_{\sigma_0}(j)},\text{ for every }j\in\{1,\cdots,d\}\setminus\{j_0\},
\end{equation} 
 \begin{equation}\label{eq:condpos}
		\widehat{\XX}^{E_{\sigma_0}(j_0)}<(\log B)^A\leqslant \XX^{E_{\sigma_0}(j_0)}.
	\end{equation}

 The goal of this subsection is to establish 
\begin{proposition}\label{le:CGAB}
	We have $$\#\CG_{\sigma_0,j_0}^{(A)}(B)=O(B(\log B)^{r-2}),$$ where the implied constant is independent of $A$.
\end{proposition}
\begin{proof}
	For every $\XX\in\CG_{\sigma_0,j_0}^{(A)}(B)$, condition \eqref{eq:condpos} implies that there exist a collection of indices $J\subset \{1\leqslant i\leqslant r:b_{i}^{(j_0,\sigma_0)}\neq 0\}$ and $i_0\in J$, such that  \begin{equation}\label{eq:pos1}
		X_{i_0}^{b_{i_0}^{(j_0,\sigma_0)}}\left(\prod_{\substack{1\leqslant i\leqslant r\\i\not\in J}}X_i^{b_{i}^{(j_0,\sigma_0)}}\right)\left(\prod_{\substack{i\in J\setminus\{i_0\}\\b_{i}^{(j_0,\sigma_0)}>0}}\left(X_i-1\right)^{b_{i}^{(j_0,\sigma_0)}}\right)\left(\prod_{\substack{i\in J\setminus\{i_0\}\\b_i^{(j_0,\sigma_0)}<0}}\left(X_i+1\right)^{b_{i}^{(j_0,\sigma_0)}}\right)\geqslant (\log B)^A,
	\end{equation}
	\begin{equation}\label{eq:pos2}
		\left(\prod_{\substack{1\leqslant i\leqslant r\\ i\not\in J}}X_i^{b_{i}^{(j_0,\sigma_0)}}\right)\left(\prod_{\substack{i\in J\\b_{i}^{(j_0,\sigma_0)}>0}}\left(X_i-1\right)^{b_{i}^{(j_0,\sigma_0)}}\right)\left(\prod_{\substack{i\in J\\b_i^{(j_0,\sigma_0)}<0}}\left(X_i+1\right)^{b_{i}^{(j_0,\sigma_0)}}\right)< (\log B)^A.
	\end{equation}
	We define the real number 
	$$\Gamma:=(\log B)^A\left(\prod_{\substack{1\leqslant i\leqslant r\\ i\not\in J}}X_i^{-b_{i}^{(j_0,\sigma_0)}}\right)\left(\prod_{\substack{i\in J\setminus\{i_0\}\\b_{i}^{(j_0,\sigma_0)}>0}}\left(X_i-1\right)^{-b_{i}^{(j_0,\sigma_0)}}\right)\left(\prod_{\substack{i\in J\setminus\{i_0\}\\b_i^{(j_0,\sigma_0)}<0}}\left(X_i+1\right)^{-b_{i}^{(j_0,\sigma_0)}}\right),$$ which depends on $B,A,J,i_0$ and $X_i,i\neq i_0$.
	Conditions \eqref{eq:pos1} and \eqref{eq:pos2} imply that $X_{i_0}$ is the unique integer lying in the interval
	\begin{equation}\label{eq:condition4}
		\begin{cases}
			\left[\Gamma^{1/b_{i_0}^{(j_0,\sigma_0)}},\Gamma^{1/b_{i_0}^{(j_0,\sigma_0)}}+1\right[ &\text{ if } b_{i_0}^{(j_0,\sigma_0)}>0;\\ \left]\Gamma^{1/b_{i_0}^{(j_0,\sigma_0)}}-1,\Gamma^{1/b_{i_0}^{(j_0,\sigma_0)}}\right]&\text{ if } b_{i_0}^{(j_0,\sigma_0)}<0.
		\end{cases}
	\end{equation} 
	Define the map $\eta_{i_0,J}^{(A)}[B]:\BZ_{>0}^{r-1}\to\BZ_{>0}$ which associates $(X_i,i\neq i_0)$ to this integer.	
	We use condition \eqref{eq:CGsigmaA} and apply Lemma \ref{co:slicing} to obtain
	\begin{align*}
		\#\CG_{\sigma_0,j_0}^{(A)}(B)\leqslant &\sum_{\substack{1\leqslant i_0\leqslant d:b_{i_0}^{(j_0,\sigma_0)}\neq 0\\ i_0\in J\subset \{1\leqslant i\leqslant r:b_{i}^{(j_0,\sigma_0)}\neq 0\}}}\sum_{\substack{(X_1,\cdots,X_r)\in\BZ_{>0}^{r}\\ \XX^{D_0(\sigma_0)}\leqslant B,\max_{1\leqslant i\leqslant r} X_i\leqslant B\\ X_{i_0}=\eta_{i_0,J}^{(A)}[B](X_i,i\neq i_0)}}2^{\sum_{j\neq j_0}\sum_{i=1}^{r}|b_i^{(j,\sigma_0)}|}\prod_{j=1}^{d}\XX^{E_{\sigma_0}(j)}\\ \ll &\sum_{\substack{1\leqslant i_0\leqslant d:b_{i_0}^{(j_0,\sigma_0)}\neq 0\\ i_0\in J\subset \{1\leqslant i\leqslant r:b_{i}^{(j_0,\sigma_0)}\neq 0\}}}\CN_\sigma(B)_{i_0,\eta_{i_0,J}^{(A)}[B]}=O(B(\log B)^{r-2}),
	\end{align*} where the implied constant is independent of the map $\eta_{i_0,J}^{(A)}[B]$ and hence independent of $A$. 
\end{proof}

\subsection{Proof of Proposition \ref{prop:keyprop2}}\label{se:proofkeyProp2}
We fix throughout $\sigma_0\in\triangle_{\max}$. We recall \eqref{eq:nuCB} and we write $$\nu_{\sigma_0}(B):=\nu(]0,1]^d;B).$$
We want to prove that, any (real) point satisfying at least one of the boundary conditions in the definition of $\CA_{\sigma_0}^{(A)}(B)$ \eqref{eq:CAsigmaAB} is contained in an integral unit cube, at least one vertex of which lies in  $$\delta(B)\bigcup\nu_{\sigma_0}(B)\bigcup\left(\bigcup_{j_0=1}^{d}\CG^{(A)}_{\sigma_0,j_0}(B)\right)\bigcup\left(\bigcup_{i=1}^{r} \left(\CA_{\sigma_0}(B)_{i,1}\cup \CA_{\sigma_0}(B)_{i,2}\right)\right).$$
Admitting this, we henceforth conclude by Corollary \ref{co:CAsigmaikbd}, Lemma \ref{le:deltaB} and Propositions \ref{prop:nusigmaWBa}, \ref{prop:nusigmab} and \ref{le:CGAB} that the deviation resulting from switching the cardinality of $\CA_\sigma^{(A)}(B)$ to the integral $\CI_\sigma^{(A)}(B)$ is  
\begin{align*}
	\left|\#\CA_\sigma^{(A)}(B)-\CI_\sigma^{(A)}(B)\right|&\leqslant 2^n\left(\#\delta(B)+\sum_{j_0=1}^{d}\#\CG_{\sigma_0,j_0}^{(A)}(B)+\#\nu_{\sigma_0}(B)+\sum_{i=1}^{r}\#\left(\CA_{\sigma_0}(B)_{i,1}\cup \CA_{\sigma_0}(B)_{i,2}\right)\right)\\ &=O(B(\log B)^{r-2}\log\log B),
\end{align*} uniformly in $A$,
which is the desired estimate of Proposition \ref{prop:keyprop2}.

In view of the proof of Propositions \ref{co:CAsigmaWBCDWsigmaB}, we only need to deal with the extra boundary condition introduced in \eqref{eq:CAsigmaAB}. Now fix $1\leqslant j_0\leqslant d$ and $\YY_0\in\CD_{\sigma_0}^{(A)}(B)$ such that 
\begin{equation}\label{eq:condbd}
	\YY_0^{E_{\sigma_0}(j_0)}=(\log B)^A\text{ for a certain }1\leqslant j_0\leqslant d,\quad\text{and}\quad Y_i\geqslant 3\text{ for all }1\leqslant i\leqslant r,
\end{equation}  and that \eqref{eq:conddeltaB} holds.
We want to show that it is contained in an integral unit cube with at least one vertex in $\CG^{(A)}_{\sigma_0,j_0}(B)$.

Let the integral vector $\yy_{0}=(y_1,\cdots,y_n)\in\BZ_{\geqslant 2}^n$ be defined as in \eqref{eq:yi1} \eqref{eq:yr+j} and
\begin{equation}\label{eq:yr+j02}
	y_{r+j_0}=\text{the unique integer in } ]Y_{r+j_0}-1,Y_{r+j_0}].
\end{equation}
Clearly $y_i\geqslant 2$ for every $1\leqslant i\leqslant r$ by the second condition in \eqref{eq:condbd}, and condition \eqref{eq:conddeltaB} guarantees that $\yy_{0}\in\CA(B)$.  
For every fixed $j\neq j_0$, the argument \eqref{eq:casestar} shows that for all $j\neq j_0$,
$$y_{r+j}\leqslant Y_{r+j}\leqslant \YY_0^{E_{\sigma_0}(j)}\leqslant 2^{\sum_{i=1}^{r}|b_i^{(j,\sigma_0)}|}\yy_0^{E_{\sigma_0}(j)}.$$
Moreover, by the first assumption of \eqref{eq:condbd} and \eqref{eq:yr+j02}, $$\yy_0^{E_{\sigma_0}(j_0)}\geqslant \YY_0^{E_{\sigma_0}(j_0)}=(\log B)^A\geqslant Y_{r+j_0}\geqslant y_{r+j_0}.$$   Thus $\yy_0$ satisfies \eqref{eq:CGsigmaA}.
Finally, since $\bb^{(j_0,\sigma_0)}\neq \mathbf{0}$, we have similarly to \eqref{eq:casest2} that
$$\widehat{\yy_0}^{E_{\sigma_0}(j_0)}<\YY_0^{E_{\sigma_0}(j_0)}=(\log B)^A.$$ This shows that $\yy_0$ satisfies \eqref{eq:condpos}.  

So far we have proved $\yy_0\in\CG^{(A)}_{\sigma_0,j_0}(B)$, and $\YY_0$ is in one of the integral unit cubes generated by $\yy_0$. This finishes the proof.
\qed

\subsection{Proof of Proposition \ref{prop:keyprop1}}\label{se:intvandercorput}
We fix throughout $\sigma_0\in\triangle_{\max}$ with an admissible ordering.
Our idea is, according to \cite[p. 249--p. 250]{Salberger}, to use Fubini's theorem to reduce the computation of $\CI^{(A)}_{\sigma_0}(B)$  to integrals over the Neron-Severi torus $\CTNS(\BR)$ against the global $\CTNS$-invariant differential form $\varpi_{\CTNS}$.

 Recall that Salberger \cite[Lemma 11.38]{Salberger} proves (recall the region $\CF(B)$ \eqref{eq:CFB})	$$\CI_{\sigma_0}(B)=\int_{\CF(B)}\xx^{D_0}\operatorname{d}\varpi_{\CTNS}+O(B(\log B)^{r-2}).$$ Consider the regions
	$$\Omega_{\sigma_0}^{(A)}(B):=\{\ZZ\in\BR_{\geqslant 1}^r:\ZZ^{D_0(\sigma_0)}\leqslant B,\min_{1\leqslant j\leqslant d}\ZZ^{E_{\sigma_0}(j)}\geqslant (\log B)^A\}, $$
\begin{align*}
	\CF_{\sigma_0}^{(A)}(B)&:=\{\YY\in\CF(B):\min_{1\leqslant j\leqslant d}Y_{r+j}\geqslant (\log B)^A\}\\ &=\{\YY\in \BG_{\operatorname{m}}^{\triangle(1)}(\BR)\cap \CTNS(\BR):\YY^{D_0}\leqslant B,\min_{1\leqslant k\leqslant n}Y_k\geqslant 1,\min_{1\leqslant j\leqslant d}Y_{r+j}\geqslant (\log B)^A\},
\end{align*} which are diffeomorphic to each other.	We have, similarly to computation done in the proof of Theorem \ref{thm:equidistrcong}, uniformly for $A\geqslant 1$, \begin{align*}
\CI_{\sigma_0}^{(A)}(B)&=\int_{\Omega_{\sigma_0}^{(A)}(B)}\left(\prod_{j=1}^{d}(\xx^{E_{\sigma_0}(j)}-1)\right)\operatorname{d}x_1\cdots\operatorname{d}x_r\\ &=\int_{\CF_{\sigma_0}^{(A)}(B)}\xx^{D_0}\operatorname{d}\varpi_{\CTNS}+O\left(\sum_{j=1}^{d}\int_{\CF_{\sigma_0}^{(A)}(B)}\left(\frac{\xx^{D_0}}{x_{r+j}}\right)\operatorname{d}\varpi_{\CTNS}\right)\\&=\int_{\CF_{\sigma_0}^{(A)}(B)}\xx^{D_0}\operatorname{d}\varpi_{\CTNS}+O(B(\log B)^{r-2}).
\end{align*}

	We next decompose $$\CF(B)\setminus\CF_{\sigma_0}^{(A)}(B)=\bigcup_{j_0=1}^d \CH_{\sigma_0,j_0}^{(A)}(B),$$ where for each $j_0\in\{1,\cdots,d\}$ according to the previously fixed admissible ordering of $\sigma_0$, $$\CH_{\sigma_0,j_0}^{(A)}(B):=\{\YY\in \BG_{\operatorname{m}}^{\triangle(1)}(\BR)\cap \CTNS(\BR):\YY^{D_0}\leqslant B,\min_{1\leqslant k\leqslant n}Y_k\geqslant 1, Y_{r+j_0}<(\log B)^A\}.$$
	We now fix such a $j_0$. We choose $\sigma_1\in\triangle_{\max}$ with $-n_{\rho_{r+j_0}}\in\sigma_1$ thanks to the completeness of the fan $\triangle$, in particular $\rho_{r+j_0}\not\in \sigma_1(1)$. The maximal cone adjacent to $\sigma_0$ without the ray $\rho_{r+j_0}$ will do.
	For the computation of $\int_{\CH_{\sigma_0,j_0}^{(A)}(B)}\xx^{D_0}\operatorname{d}\varpi_{\CTNS}$, we now switch to an admissible ordering of $\sigma_1$ such that $\rho_{r+j_0}\mapsto\rho_r$ and $\sigma_1(1)=\{\rho_{r+1},\cdots,\rho_{r+d}\}$. Recall \eqref{eq:arhosigma}. Since $\langle-m_{D_0}(\sigma),n_{\rho_r}\rangle=\langle m_{D_0}(\sigma),-n_{\rho_r}\rangle>0$ thanks to our choice of $\sigma_1$, it follows that $$a_r(\sigma_1)>1.$$ Using \eqref{eq:coordfuneq}, we then have
	\begin{align*}
		\int_{\CH_{\sigma_0,j_0}^{(A)}(B)}\xx^{D_0}\operatorname{d}\varpi_{\CTNS}&=\int_{\CH_{\sigma_0,j_0}^{(A)}(B)}\xx^{D_0(\sigma_1)}\operatorname{d}\varpi_{\CTNS}\\&=\int_{\substack{\forall 1\leqslant i\leqslant r,1\leqslant x_i\leqslant B\\ 1\leqslant x_r<(\log B)^A, \xx^{D_0(\sigma_1)}\leqslant B}} \xx^{D_0(\sigma_1)}\frac{\operatorname{d}x_1}{x_1}\cdots\frac{\operatorname{d}x_r}{x_r}\\
		&=\int_{1\leqslant x_r<(\log B)^A}x_r^{a_r(\sigma_1)-1}\left(\int_{\star}\left(\prod_{i=1}^{r-1}x_i^{a_i(\sigma_1)-1}\right)\operatorname{d}x_1\cdots\operatorname{d}x_{r-1}\right)\operatorname{d}x_r,
	\end{align*}
	where condition $\star$ means \begin{equation}\label{eq:condint}
		\text{for all } 1\leqslant i\leqslant r-1,\quad 1\leqslant x_i\leqslant B,\quad \text{and}\quad\prod_{i=1}^{r-1}x_i^{a_i(\sigma_1)}\leqslant B/x_r^{a_r(\sigma_1)}.
	\end{equation}
	In terms of the notation in Lemma \ref{le:sublemma}, the integral above with condition \eqref{eq:condint} is
	\begin{equation}\label{eq:intR}
		\int_{1\leqslant x_r<(\log B)^A}x_r^{a_r(\sigma_1)-1} R_{1,\aa_{*r}(\sigma_1)}^{[r-1]}(B,x_r^{a_r(\sigma_1)})\operatorname{d}x_r.
	\end{equation}
	If $\aa_{*r}(\sigma_1)\neq\mathbf{0}$, then since $x_r\leqslant (\log B)^A$, by Lemma \ref{le:sublemma}, the integral \eqref{eq:intR} is 
	\begin{align*}
		\int_{1\leqslant x_r<(\log B)^A}x_r^{a_r(\sigma_1)-1}\frac{B}{x_r^{a_r(\sigma_1)}}\left(\log B\right)^{r-2}\operatorname{d}x_r&=B(\log B)^{r-2}\int_{1\leqslant x_r<(\log B)^A}\frac{\operatorname{d}x_r}{x_r}\\&=O_A(B(\log B)^{r-2}\log\log B).
	\end{align*}
	If $\aa_{*r}(\sigma_1)=\mathbf{0}$, then by Remark \ref{rmk:sublemma}, the integral \eqref{eq:intR} is $$\ll (\log B)^{r-1}\int_{1\leqslant x_r<(\log B)^A}x_r^{a_r(\sigma_1)-1}\operatorname{d}x_r=O((\log B)^{r-1+Aa_r(\sigma_1)}).$$ We finally conclude that 
	\begin{align*}
		\CI_{\sigma_0}(B)-\CI_{\sigma_0}^{(A)}(B)&=\int_{\CF(B)\setminus\CF_{\sigma_0}^{(A)}(B)}\xx^{D_0}\operatorname{d}\varpi_{\CTNS}+O(B(\log B)^{r-2})\\ &\leqslant\sum_{j_0=1}^{d}	\int_{\CH_{{\sigma_0},j_0}^{(A)}(B)}\xx^{D_0}\operatorname{d}\varpi_{\CTNS}+O(B(\log B)^{r-2})=O_A(B(\log B)^{r-2}\log\log B).
	\end{align*}
	This finishes the proof of Proposition \ref{prop:keyprop1}.
\qed

\section{Counting integral points on subvarieties of universal torsors}\label{se:countingsubvar}
We keep working under the setting of \S\ref{se:purityproof} and \S\ref{se:toricvandercorput} and using the notation therein. The main result of this section is Proposition \ref{prop:subvar}, which counts integral points of bounded height lying on a proper closed subvariety of $X_0$. We then deduce Corollary \ref{cor:subvar} from it.
\begin{proposition}\label{prop:subvar}
	Let $\phi$ be a non-zero integral polynomial in $n$-variables. Let us consider the set 
\begin{equation}\label{eq:CAphi}
		\CA_{\phi}(B):=\{\XX\in\CA(B):\phi(\XX)=0\}.
\end{equation}
	Then we have
	$$\#\CA_{\phi}(B)\ll (\deg \phi) B(\log B)^{r-2}\log\log B,$$ where the implied constant does not depend on $\phi$.
\end{proposition}
\begin{corollary}\label{cor:subvar}
	Let $Y\subset X$ be a proper closed subvariety.
	Then $$\#\{P\in (Y\cap \CT_{O})(\BQ):H_{\operatorname{tor}}(P)\leqslant B\}=O_Y(B(\log B)^{r-2}\log\log B).$$
\end{corollary}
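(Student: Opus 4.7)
The plan is to reduce the counting of rational points on $Y \cap \CT_O$ to a universal torsor count of integer tuples satisfying a single polynomial equation, and then invoke Proposition~\ref{prop:subvar}. If $Y \cap \CT_O = \varnothing$ there is nothing to prove, so assume otherwise. Since $Y$ is a proper closed subvariety of the irreducible variety $X$ and $\CT_O$ is dense, $Y \cap \CT_O$ is a proper closed subvariety of $\CT_O$, hence has dimension strictly less than $d$. Pulling back along the universal torsor map $\pi \colon X_0 \to X$ restricted over $\CT_O$ (which is a $\CTNS$-torsor of relative dimension $r$), we conclude that $\pi^{-1}(Y \cap \CT_O)$ is a proper closed subvariety of $\CT_{X_0} \simeq \BG_{\operatorname{m},\BQ}^{n}$, and in particular its Zariski closure in $\BA^{n}_\BQ$ has dimension $<n$.

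Next, I would choose a nonzero polynomial $\phi \in \BZ[x_1, \ldots, x_n]$ (after clearing denominators from a defining Laurent polynomial on the torus) vanishing on this closure; in particular $\phi(\XX) = 0$ for every integral lift of every point of $(Y \cap \CT_O)(\BQ)$. The polynomial $\phi$ depends only on $Y$ (and our fixed model $\CX_0 \subset \BA^n_\BZ$), so $\deg \phi$ is an absolute constant that will be absorbed by the implied constant $O_Y$. Now by Theorem~\ref{thm:univtor} every $P \in (Y\cap \CT_O)(\BQ)$ with $H_{\operatorname{tor}}(P) \leqslant B$ lifts to a tuple in $\CX_0(\BZ)$, and the fibre of $\pi$ above $P$ is a torsor under $\FTNS(\BZ) = \{\pm1\}^r$, so $P$ admits at most $2^r$ such lifts. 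Restricting to lifts with positive coordinates (the set $\CC_0(B)^+$ in \eqref{eq:CCB}) and applying the sign-decomposition used in the proof of Theorem~\ref{thm:effectiveEEUT} (cf.\ \cite[Lemma 11.4]{Salberger}), the number of rational points being counted is bounded above by a constant $C_X$ times $\#\{\XX \in \CC_0(B)^+ : \phi(\XX) = 0\}$.

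Finally, since $\CC_0(B)^+ \subseteq \CA(B)$, this last cardinality is at most $\#\CA_\phi(B)$, and Proposition~\ref{prop:subvar} gives
\[
\CN_X(Y;B) \;\ll_X\; \#\CA_\phi(B) \;\ll\; B(\log B)^{r-2}\log\log B + (\deg\phi)\,B(\log B)^{r-2},
\]
which is $O_Y(B(\log B)^{r-2}\log\log B)$ as required. I do not anticipate any genuine obstacle here: the construction of $\phi$ is routine once one observes the dimension drop on the universal torsor, and the only point requiring small care is the bookkeeping that passes from rational points to integer tuples in the positive octant, which is standard Cox-ring lifting.
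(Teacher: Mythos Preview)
Your proposal is correct and follows essentially the same approach as the paper: pull $Y$ back along the torsor map $\pi$ to obtain a proper closed subvariety of $X_0$, choose a nonzero integral polynomial $\phi$ vanishing on it, lift rational points to integral tuples via Theorem~\ref{thm:univtor}, and bound by $\#\CA_\phi(B)$ using Proposition~\ref{prop:subvar}. The paper's proof is terser (it writes the single inequality $\CN_X(Y;B)\leqslant \#\mathfrak{T}_{\CX_0}(\BZ)\,\#\CA_{\phi_0}(B)$ without elaborating on the sign bookkeeping), but the content is the same; note only that after the sign-decomposition the polynomial becomes $\phi^{\epsilon}(x)=\phi(\epsilon\cdot x)$ for each $\epsilon\in\{\pm1\}^n$, which has the same degree as $\phi$ so Proposition~\ref{prop:subvar} still applies uniformly.
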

\begin{remark}
	Proposition \ref{prop:subvar} and Corollary \ref{cor:subvar} provide an extra power saving of $\log B$ compared to the result $O(B(\log B)^{r-2+\iota})$ for a certain $0<\iota<1$, obtained via Corollary \ref{thm:effectiveEE} plus sieving techniques such as Theorem \ref{thm:Selbergsieve}.
\end{remark}
\begin{proof}[Proof of Corollary \ref{cor:subvar} assuming Proposition \ref{prop:subvar}]
	We consider the preimage $Y_0:=\pi^{-1}(Y)$ as a proper closed subvariety of $X_0$.
	We choose an integral non-zero polynomial $\phi_0$ in $n$-variables which vanishes on $Y_0$. 
	Ignoring the coprimality condition, by Proposition \ref{prop:subvar}, we clearly have 
	\begin{equation*}
		\#\{P\in (Y\cap \CT_{O})(\BQ):H_{\operatorname{tor}}(P)\leqslant B\}\ll \#\CA_{\phi_0}(B)\ll_{\phi_0}B(\log B)^{r-2}\log\log B. \qedhere
	\end{equation*} 
\end{proof}

Before entering into the proof of Proposition \ref{prop:subvar}, we consider, for every $\sigma\in\triangle_{\max}$ and $1\leqslant k_0\leqslant n$, the set \begin{equation}\label{eq:CABik}
	\CA_\sigma(B)_{k_0,\widetilde{\eta}[B]}:=\{\XX\in\CA_\sigma(B):X_{k_0}=\widetilde{\eta}[B](X_k,1\leqslant k\leqslant n,k\neq k_0)\},
\end{equation} where $\widetilde{\eta}[B]:\BZ_{>0}^{n-1}\to\BR_{\geqslant 1}$ is a map depending on $B$.
\begin{lemma}\label{le:CABik}
		If $1\leqslant k_0\leqslant r$, assume moreover that $\widetilde{\eta}[B]$ depends only on $X_i,1\leqslant i\leqslant r,i\neq k_0$. We then have $$\#\CA_\sigma(B)_{k_0,\widetilde{\eta}[B]}=O(B(\log B)^{r-2}\log\log B).$$
\end{lemma}
\begin{proof}
	If $1\leqslant k_0\leqslant r$, then by our assumption and Lemma \ref{co:slicing}, $$\#\CA_{\sigma}(B)_{k_0,\widetilde{\eta}[B]}\leqslant\sum_{\substack{\XX\in\BZ_{>0}^r\\ \XX^{D_0(\sigma)}\leqslant B,\max_{1\leqslant k\leqslant r} X_k\leqslant B\\ X_{k_0}=\widetilde{\eta}[B](X_k,1\leqslant k< k_0)}}\prod_{j=1}^d\XX^{E_\sigma(j)}= \CN_{\sigma}(B)_{k_0,\widetilde{\eta}[B]}=O(B(\log B)^{r-2}).$$ However if $r+1\leqslant k_0=r+j_0\leqslant n$ for a fixed $1\leqslant j_0\leqslant d$, then upon first fixing $X_i,1\leqslant i\leqslant r$ and $X_{r+j},j\neq j_0$, we have (recall the set \eqref{eq:CAsigmaAB})
	\begin{align*}
		\#\CA_{\sigma}(B)_{k_0,\widetilde{\eta}[B]}&\leqslant \#(\CA_\sigma(B)\setminus \CA_\sigma^{(1)}(B))+ \sum_{\substack{\XX\in\BZ_{>0}^r:\XX^{D_0(\sigma)}\leqslant B\\ \max_{1\leqslant i\leqslant r}X_i\leqslant B,\XX^{E_{\sigma}(j_0)}\geqslant \log B}}\prod_{\substack{1\leqslant j\leqslant d\\ j\neq j_0}}\XX^{E_{\sigma}(j)}\\ &\ll B(\log B)^{r-2}\log\log B+(\log B)^{-1}\sum_{\substack{\XX\in\BZ_{>0}^r:\XX^{D_0(\sigma)}\leqslant B\\ \max_{1\leqslant i\leqslant r}X_i\leqslant B}}\frac{\XX^{D_0(\sigma_0)}}{\prod_{i=1}^{r}X_i}\\&=O(B(\log B)^{r-2}\log\log B),
	\end{align*} on using Lemma \ref{le:sublemma}.
\end{proof}

\begin{proof}[Proof of Proposition \ref{prop:subvar}]
	We recall the set  \eqref{eq:CAsigmaB}. For every $\sigma\in\triangle_{\max}$, we define $$\CA_{\sigma,\phi}(B):=\{\XX\in\CA_\sigma(B): \phi(\XX)=0\}.$$ It suffices to prove,  uniformly for $\phi$, 
\begin{equation}\label{eq:CAsigmaphi}
	\#\CA_{\sigma,\phi}(B)\ll (\deg \phi)B(\log B)^{r-2}\log\log B.
\end{equation}

	We fix an admissible ordering with respect to $\sigma$ and consider the projection $p_n:\BA^n\to\BA^{n-1}$ onto the first $n-1$ coordinates. Then either $\phi$ is constant on the variable $x_{n}$, i.e., $\phi$ factors through $p_n$, or the restriction $\pi_n:=p_n|_{\phi=0}$ is a generically finite dominant morphism to $\BA^{n-1}$. For the latter case, consider the locus $Y_{n-1}\subset\BA^{n-1}$ where $\pi_n$ fails to be finite. If we regard $\phi$ as a (non-constant) polynomial in $x_n$, then for any $\XX\in\BZ^n$ with $\phi(\XX)=0$, $\pi_n(\XX)\in Y_{n-1}$ if and only if all the coefficients (as polynomials in $\BZ[x_1,\cdots,x_{n-1}]$) of $\phi$ vanish on $\pi_n(\XX)$. We can take for example the (non-zero) polynomial $\phi_{n-1}\in\BZ[x_1,\cdots,x_{n-1}]$ among these coefficients of the highest degree, and we define
	$$\CA_{\sigma,\phi}^{(1)}(B):=\{\XX\in\CA_{\sigma,\phi}(B):p_n(\XX)\not\in Y_{n-1}\},$$
	$$\CA_{\sigma,\phi_{n-1}}(B):=\{\XX\in\CA_\sigma(B):\phi_{n-1}(p_n(\XX))=0\},$$
	so that $\deg\phi_{n-1}\leqslant \deg\phi$ and $$\CA_{\sigma,\phi}(B)\subset\CA_{\sigma,\phi}^{(1)}(B)\bigcup \CA_{\sigma,\phi_{n-1}}(B).$$ Note however for the first case where $\phi$ is constant in $x_n$ we just have  $\CA_{\sigma,\phi}(B)=\CA_{\sigma,\phi_{n-1}}(B)$ and $\CA_{\sigma,\phi}^{(1)}(B)=\varnothing$.
	Continuing the analysis for $\BA^{n-1}$ and the non-zero polynomial $\phi_{n-1}$ (if non-constant), we can further break $\CA_{\sigma,\phi_{n-1}}(B)$ into subsets. Finally we obtain 
	\begin{equation}\label{eq:decompCABf}
		\CA_{\sigma,\phi}(B)\subset \bigcup_{k=1}^{n}\CA_{\sigma,\phi}^{(k)}(B),
	\end{equation}
	where each set $\CA_{\sigma,\phi}^{(k)}(B),1\leqslant k\leqslant n$, if non-empty, is constructed  at the $k$-th step with respect to a non-constant polynomial $\phi_{n-k+1}\in\BZ[X_1,\cdots,X_{n-k+1}]$ with $\deg \phi_{n-k+1}\leqslant \deg \phi$ (by convention $\phi_n:=\phi$), and has the property that,  for $1\leqslant k\leqslant n-1$, there exists a finite collection of maps $(\eta_{k}(m):\BZ_{>0}^{n-k}\to\BR_{\geqslant 1})_{m\in \CM(k)}$, such that for every $\XX\in\CA_{\sigma,\phi}^{(k)}(B)$,  $X_{n-k+1}=\eta_{k}(m)(X_1,\cdots,X_{n-k})$ for a certain $m\in \CM(k)$, and $(\eta_{n}(m))_{m\in\CM(n)}$ consists of constant functions corresponding to the solutions of $\phi_1=0$ (in one variable). Moreover for every $1\leqslant k\leqslant n$,  $\#\CM(k)\leqslant\deg \phi_{n-k+1}\leqslant\deg \phi$.
	
	Recalling \eqref{eq:CABik}, by construction, the maps $(\eta_{k}(m):\BZ_{>0}^{n-k}\to\BR_{\geqslant 1})_{m\in \CM(k),1\leqslant k\leqslant n}$ meet the assumption of Lemma \ref{le:CABik}, and so we have $$\#\CA_{\sigma,\phi}^{(k)}(B)\leqslant \sum_{m\in\CM(k)} \#\CA_\sigma(B)_{n-k+1,\eta_k(m)}=O\left((\deg \phi)B(\log B)^{r-2}\log\log B\right).$$
	To conclude, we go back to \eqref{eq:decompCABf} and obtain
	$$\#\CA_{\sigma,\phi}(B)\leqslant \sum_{k=1}^{n}\#\CA_{\sigma,\phi}^{(k)}(B)=O\left((\deg \phi)B(\log B)^{r-2}\log\log B\right).$$
	The proof of \eqref{eq:CAsigmaphi} is thus completed.
\end{proof}

\section{The geometric sieve for toric varieties}\label{se:geomsieve}
The goal of this section is to prove Theorem \ref{thm:maingeomsieve} via establishing the following.
\begin{theorem}\label{thm:geomsieve1}
	Let $f,g$ be integral polynomials in $n$-variables which are coprime (i.e., $(f=g=0)\subset\BA^{n}$ has codimension two). Then uniformly for $N\geqslant 2$,
		$$\#\{\XX\in\CA(B): \text{there exists } p\geqslant N,p\mid \gcd(f(\XX),g(\XX))\}\ll \frac{B(\log B)^{r-1}}{N\log N}+B(\log B)^{r-2}\log\log B,$$ where we recall \eqref{eq:CAB} and the implied constant depends only on $f,g$.
\end{theorem}
\begin{proof}[Proof of Theorem \ref{thm:maingeomsieve} assuming Theorem \ref{thm:geomsieve1}]
 For every Zariski closed $Z\subset X$ with codimension at least two, we write $Z_0:=\pi^{-1}(Z)\subset X_0$. Then $Z_0$ is also closed and has codimension at least two in $X_0$ thanks to the faithful flatness of $\pi$. Let $\CZ_0$ be the Zariski closure of $Z_0$ in $\CX_0$. Then $\CZ_0=\pi^{-1}(\CZ)$. 
Upon considering all irreducible components of $Z_0$, we may assume that $\CZ_0\subset (f_0=g_0=0)\cap \CX_0$ for certain non-zero integral coprime polynomials $f_0,g_0$ in $n$-variables.  Therefore by Theorem \ref{thm:geomsieve1} applied to $f_0,g_0$, we obtain
\begin{align*}
	&\#\{P\in \CT_{O}(\BQ):H_{\operatorname{tor}}(P)\leqslant B, \text{there exists } p\geqslant N,\widetilde{P}\bmod p \in\CZ(\BF_p)\}\\ \ll&\#\{\XX\in\CA(B): \text{there exists } p\geqslant N,p\mid \gcd(f_0(\XX),g_0(\XX))\}\\ \ll& \frac{B(\log B)^{r-1}}{N\log N}+B(\log B)^{r-2}\log\log B.
\end{align*}
The choice of $f_0,g_0$ depends only on $\CZ_0$ and hence on $Z$. This finishes the proof.
\end{proof}
\begin{remark}\label{re:effectiveGS}
	Summarising all known examples (cf. \cite{Ekedahl,Browning-HB} and  \cite[Theorem 3.1 (ii) and its Remarks]{Cao-Huang2}), we expect  that the following effective \textbf{(GS)} condition holds: (Recall the counting function \eqref{eq:CR} in Theorem \ref{thm:keyOmega}.) 
	
	\emph{Let $V/k$ be an almost-Fano variety with $\CV$ an integral model over $\CO_S$, and let $Z\subset V$  be a Zariski closed subset of codimension at least two with $\CZ$ the Zariski closure of $Z$ in $\CV$ and $\CW:=\CV\setminus\CZ$. 
		Let $\Omega_{\nu}=\operatorname{Mod}_{\nu,1}^{-1}\left(\CW(\BF_\nu)\right)\subset \CV(\CO_{\nu})=V(k_\nu)$. 
		Then there exist continuous functions $h_1,h_2:\BR_{>0}\to\BR_{>0}$ such that $h_i(x)=o(1),x\to\infty$ for $i=1,2$, such that, uniformly for every $N\geqslant 2$,
			$$\CR((\Omega_\nu);N,B)\ll h_1(N)+h_2(B).$$
		where the implied constant depends only on $\CV$ and $Z$.}
	
	Theorem \ref{thm:geomsieve1} shows that for any closed $Z\subset X$ of codimension at least two, we can choose 
	$$h_1(x)=\frac{1}{x\log x},\quad h_2(x)=\frac{\log\log x}{\log x}.$$
\end{remark}
The remaining of this section is devoted to proving Theorem \ref{thm:geomsieve1}.
\subsection{Two specific versions of the geometric sieve} 	
We shall fix the coprime polynomials $f,g$ throughout the rest of this section, and we let $\CY_0$ be the subscheme $(f=g=0)\subset\BA^n_\BZ$. 
For every $\sigma\in\triangle_{\max}$with an admissible ordering, consider the projection $\operatorname{pr}_\sigma:\BA^n\to\BA^r$ onto the first $r$ coordinates, and for every $\zz\in \BA^r$, we denote the fibre of the restriction to $\CY_0$ of $\operatorname{pr}_\sigma$ over $\zz$ by $\CY_{0,\zz}(\sigma):=\operatorname{pr}_\sigma^{-1}(\zz)\cap \CY_0\subset \BA_{k(\zz)}^d$, where $k(\zz)$ denotes the residue field of $\zz$. Let $\CZ_0(\sigma)$  be the set of $\zz\in \BA^r$ such that $\CY_{0,\zz}(\sigma)$ has codimension at most one in $\BA_{k(\zz)}^{d}$. Since $\CY_0$ has codimension at least two, then the argument in \cite[p. 360 Middle paragraph]{Poonen} shows that the (constructible) set $\CZ_0(\sigma)$ has codimension at least one in $\BA^r$.  Hence we can choose a non-zero polynomial $h_\sigma\in\BZ[X_1,\cdots,X_r]$ which vanishes on $\CZ_0(\sigma)$, so that for any $\zz\in\BA^r\setminus (h_\sigma=0)$, the fibre $\CY_{0,\zz}(\sigma)$ has codimension at least two in $ \BA_{k(\zz)}^d$.

The following version of the geometric sieve is dedicated to controlling the contribution from arbitrarily large primes. Recall \eqref{eq:CAsigmaAB}.
\begin{proposition}\label{prop:geomsieve1}
For every $\sigma\in\triangle_{\max}$ and for $A,N\geqslant 2$, consider the following set
\begin{equation}\label{eq:CBMB}
 \CK_{\sigma}^{(A)}(B;N):=\{\XX\in\CA_\sigma^{(A)}(B):h_\sigma(\XX)\neq 0 \text{ and there exists } p\geqslant N,p\mid \gcd(f(\XX),g(\XX))\}.
\end{equation}
Then we have, uniformly for such $A,N$,
$$\#\CK_{\sigma}^{(A)}(B;N)\ll\frac{B(\log B)^r}{N\log N}+B(\log B)^{r-A},$$ where the implied constant depends only on $f,g$.
\end{proposition}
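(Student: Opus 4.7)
Parametrise $\XX \in \CA_\sigma^{(A)}(B)$ as $(\zz,\xx)$ with $\zz = (X_1,\ldots,X_r) \in \BN_{\neq 0}^r$ and $\xx = (X_{r+1},\ldots,X_{r+d}) \in \BN_{\neq 0}^d$. For admissible $\XX$ with $h_\sigma(\zz) \neq 0$, the vector $\xx$ ranges in the box $\prod_j [1,T_j(\zz)]$ with $T_j(\zz) := \zz^{E_\sigma(j)} \geq (\log B)^A$; by \eqref{eq:EjDsigma}, $T(\zz) := \prod_j T_j(\zz) = \zz^{D_0(\sigma)}/\prod_i X_i$. The fibre $\CY_{0,\zz}(\sigma) \subset \BA^d_{k(\zz)}$ has codimension $\geq 2$ by construction of $h_\sigma$, so a uniform Lang--Weil estimate (with constant depending only on $\deg f$ and $\deg g$) yields
$$\#\CY_{0,\zz}(\sigma)(\BF_p) \ll p^{d-2}.$$
My goal is the per-slice bound
$$N(\zz) := \#\{\xx \in \text{box}(\zz) : \exists\,p \geq M,\ p \mid \gcd(f(\zz,\xx),g(\zz,\xx))\} \ll \frac{T(\zz)}{M\log M} + \frac{T(\zz)\log B}{T_{\min}(\zz)},$$
where $T_{\min}(\zz) := \min_j T_j(\zz)$. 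Summing over $\zz$ via Lemma \ref{le:sublemma}, which gives $\sum_\zz T(\zz) = \sum_\zz \prod_i X_i^{a_i(\sigma)-1} \ll B(\log B)^{r-1}$, and using $T_{\min}(\zz) \geq (\log B)^A$ in the second term, will then yield the proposition (with room to spare in the main term $B(\log B)^r/(M\log M)$).

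I will overcount $N(\zz) \leq \sum_{p \geq M} N_p(\zz)$ with $N_p(\zz) := \#\{\xx \in \text{box} : p \mid f,\ p \mid g\}$, split the $p$-sum at $p = T_{\min}(\zz)$, and treat each range separately. For small primes $M \leq p \leq T_{\min}(\zz)$, every $T_j(\zz) \geq p$, so the standard box-modulo-$p$ count gives
$$N_p(\zz) \leq \#\CY_{0,\zz}(\sigma)(\BF_p) \prod_j \lceil T_j(\zz)/p \rceil \ll T(\zz)/p^2,$$
and Mertens-style partial summation $\sum_{p \geq M} p^{-2} \ll (M\log M)^{-1}$ produces the first term of the per-slice bound.

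For large primes $p > T_{\min}(\zz)$, pick $j_0 = j_0(\zz)$ with $T_{j_0}(\zz) = T_{\min}(\zz)$. Since $p > T_{j_0}$, the coordinate $X_{r+j_0}$ is determined by its residue mod $p$, so
$$N_p(\zz) \leq \#\CY_{0,\zz}(\sigma)(\BF_p) \prod_{j \neq j_0} \lceil T_j(\zz)/p \rceil \ll p^{d-2} \prod_{j \neq j_0}(T_j(\zz)/p + 1).$$
Reindexing the $j \neq j_0$ so that $T_{j_1} \leq T_{j_2} \leq \cdots \leq T_{j_{d-1}}$ and summing $p$ in the dyadic bands $(T_{j_k},T_{j_{k+1}}]$: the first band $(T_{j_0},T_{j_1}]$ contributes $N_p(\zz) \ll T(\zz)/(T_{j_0}(\zz)p)$, summing to $\ll T(\zz)\log B/T_{\min}(\zz)$; the further bands are strictly smaller by a telescoping calculation since the product collapses to at most $2^{d-1}$ while the $p^{d-2}$-factor is eventually absorbed by a hard upper cutoff. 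Primes $p > \max_j T_j(\zz)$ are handled separately: such $p$ can divide $\gcd(f,g)$ only if either $\xx \in \CY_{0,\zz}(\BZ) \cap \text{box}$ (controlled by a uniform Broberg-type codim-two integer-point bound) or $\gcd(f(\xx),g(\xx)) \neq 0$, in which case $p \leq |\gcd(f,g)| \ll B^{O(1)}$ and at most $O(\log B/\log M)$ such primes occur per $\xx$.

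\textbf{Main obstacle.} The delicate step is the large-prime range: Lang--Weil's raw $p^{d-2}$ factor grows, while the truncation $\prod_{j \neq j_0}(T_j/p+1)$ stabilises at a constant once $p$ surpasses all remaining $T_j(\zz)$, so without an extra cutoff the band-by-band sum diverges. Extracting a hard cutoff $p \ll B^{O(1)}$ from the size of $|\gcd(f,g)|$ and invoking a uniform codim-two integer-point bound on $\CY_{0,\zz}(\BZ) \cap \text{box}$ (so that both pieces of the dichotomy $\gcd = 0$ vs.\ $\gcd \neq 0$ are covered) is the core of the argument; the bookkeeping must preserve the factor $1/T_{\min}(\zz)$ which, combined with $T_{\min}(\zz) \geq (\log B)^A$, produces the $B(\log B)^{r-A}$ secondary term after summing over $\zz$.
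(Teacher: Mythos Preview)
Your slicing structure---fix $\zz=(X_1,\dots,X_r)$, count $\xx=(X_{r+1},\dots,X_{r+d})$ in the box $\prod_j[1,\zz^{E_\sigma(j)}]$, then sum over $\zz$ via Lemma~\ref{le:sublemma}---is exactly what the paper does. The divergence is in how the per-slice bound is obtained. The paper does \emph{not} re-derive it: it quotes the Ekedahl-type sieve of Browning--Heath-Brown (Lemma~\ref{le:EkedahlBrHB}, i.e.\ \cite[Lemma~2.1]{Browning-HB}), which gives uniformly for coprime $f_1,f_2$ of bounded degree and height $\leqslant Q$ in a lopsided box,
\[
\#\{\xx:\exists\,p>M,\ p\mid\gcd(f_1,f_2)\}\ \ll\ \frac{\FB\log(\FB Q)}{M\log M}+\frac{\FB\log(\FB Q)}{B_{\min}}.
\]
Applying this with $\FB=\prod_j\zz^{E_\sigma(j)}$, $B_{\min}\geqslant(\log B)^A$ and $\log(\FB Q)\ll\log B$, then summing over $\zz$, immediately yields the proposition.

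Your direct Lang--Weil approach has a genuine gap in the intermediate-prime range when $d\geqslant 3$. In the band $T_{j_k}<p\leqslant T_{j_{k+1}}$ with $1\leqslant k\leqslant d-2$, your bound reads $N_p(\zz)\ll p^{k-1}\,T(\zz)/(T_{j_0}\cdots T_{j_k})$, so summing over primes in that band gives
\[
\ll\ \frac{T_{j_{k+1}}^{\,k}}{\log T_{j_{k+1}}}\cdot\frac{T(\zz)}{T_{j_0}\cdots T_{j_k}}.
\]
For this to be $\ll T(\zz)\log B/T_{j_0}$ you would need $T_{j_{k+1}}^{\,k}/(T_{j_1}\cdots T_{j_k})\ll\log B\cdot\log T_{j_{k+1}}$, which fails badly: take $d=3$, $T_{j_0}=T_{j_1}=(\log B)^A$ and $T_{j_2}\asymp B/(\log B)^{2A}$, so that $T_{j_2}/T_{j_1}\asymp B/(\log B)^{3A}$. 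Thus the claim ``the further bands are strictly smaller by a telescoping calculation'' is false, and the $p^{d-2}$ factor from Lang--Weil is \emph{not} absorbed by the hard cutoff $p\ll B^{O(1)}$ in these bands. The Browning--Heath-Brown lemma circumvents this by an iterated resultant reduction rather than a naive box count, and that is the missing idea in your argument. (Your small-prime range and the final range $p>T_{\max}$ with the $\gcd=0$/$\gcd\neq 0$ dichotomy are fine in outline, but they do not cover the middle.)
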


To get rid of the extra $\log B$ factor in the first term of Proposition \ref{prop:geomsieve1}, we also need the next sieve result which treats the contribution from every single moderately large prime. We keep using the notation above.
\begin{proposition}\label{prop:geomsieve2}
For every $\sigma\in\triangle_{\max}$, for every prime $p$ and for $A\geqslant 2$, define analogously the set
	$$\CK_{\sigma}^{(A)}(B;\{p\}):=\{\XX\in\CA_\sigma^{(A)}(B):h_\sigma(\XX)\neq 0\text{ and }p\mid \gcd(f(\XX),g(\XX))\}.$$ 
	Then we have, uniformly for $p\leqslant (\log B)^{A}$, 
	$$\#\CK_{\sigma}^{(A)}(B;\{p\})\ll \frac{B(\log B)^{r-1}}{p^2}+p^{d-1}B(\log B)^{r-1-A}.$$
\end{proposition}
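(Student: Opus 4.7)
The plan is to fix an admissible ordering of $\sigma$ and split each $\XX \in \CA_\sigma^{(A)}(B)$ into $\zz = (X_1, \ldots, X_r)$ and $\XX' = (X_{r+1}, \ldots, X_{r+d})$. For each fixed $\zz$ with $\zz^{D_0(\sigma)} \leqslant B$ and $h_\sigma(\zz) \neq 0$, the variable $\XX'$ ranges in the box $\prod_{j=1}^d [1, L_j]$ where $L_j = \zz^{E_\sigma(j)} \geqslant (\log B)^A \geqslant p$ under our hypothesis $p \leqslant (\log B)^A$. The central quantity is the residue count $\rho(p;\zz) := \#\{\XX'_0 \in (\BF_p)^d : f(\zz, \XX'_0) \equiv g(\zz, \XX'_0) \equiv 0 \pmod p\}$, and a standard box--residue estimate yields
\[
N(\zz) := \#\{\XX' \in \prod_j [1, L_j] : p \mid \gcd(f(\zz, \XX'), g(\zz, \XX'))\} \leqslant \rho(p;\zz) \prod_{j=1}^d (L_j/p + 1),
\]
where the bound $L_j \geqslant p$ ensures $\prod_j(L_j/p + 1) \leqslant 2^d \prod_j L_j / p^d$.

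The main analytic input is the generalised Lang--Weil estimate (Proposition \ref{co:Lang-Weil}): for $\zz$ such that $h_\sigma(\zz) \not\equiv 0 \pmod p$, the fibre $\CY_{0,\zz}(\sigma)$ reduced modulo $p$ has codimension $\geqslant 2$ in $\BA^d_{\BF_p}$ by the very choice of $h_\sigma$, giving $\rho(p;\zz) \ll p^{d-2}$ with implied constant depending only on $f, g$. The $\zz$'s will be split into a \emph{generic} set where this holds and an \emph{exceptional} set where $p \mid h_\sigma(\zz)$ (while $h_\sigma(\zz) \neq 0$ in $\BZ$); on the exceptional set we use the cruder bound $\rho(p;\zz) \leqslant p^{d-1}$, valid outside a very thin locus (where both $f(\zz, \cdot)$ and $g(\zz, \cdot)$ are identically zero modulo $p$), and that degenerate locus must be treated by absorbing it into the exceptional contribution via polynomial vanishing conditions on $\zz \bmod p$.

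For the generic contribution, combining the Lang--Weil bound with the box-residue estimate gives a bound of order $\sum_\zz \prod_j L_j / p^2$. Using the identity $\prod_j L_j = \zz^{D_0(\sigma)}/\prod_i X_i$ coming from $\sum_j E_\sigma(j) = D_0(\sigma) - \sum_i D_{\varrho_i}$, together with the standard estimate $\sum_{\zz : \zz^{D_0(\sigma)} \leqslant B} \zz^{D_0(\sigma)}/\prod_i X_i \ll B(\log B)^{r-1}$ extracted from the proof of Lemma \ref{le:CIsigma0Bomega}, one obtains the main term $O(B(\log B)^{r-1}/p^2)$. For the exceptional contribution, expand $\prod_j (L_j/p + 1) = \sum_{T \subseteq [d]} p^{-|T|} \prod_{j \in T} L_j$. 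The sub-sum coming from $T = [d]$ combined with the codimension-one restriction $p \mid h_\sigma(\zz)$ produces a term already absorbed in $O(B(\log B)^{r-1}/p^2)$ by the residue-class splitting $\bar\zz \in \{h_\sigma \equiv 0\} \subset (\BF_p)^r$ (which has size $O(p^{r-1})$). For each proper subset $T \subsetneq [d]$, the truncated sum $\sum_{\zz} \prod_{j \in T} L_j$ picks up at least one missing factor $L_{j_0} \geqslant (\log B)^A$, and a direct application of Lemma \ref{le:twospecsum1} (giving bounds of the form $S^{(A)}_{\aa(\sigma), j_0}(B) \ll B(\log B)^{r-1-A}$) controls this contribution; multiplying by $\rho(p;\zz) \leqslant p^{d-1}$ and the corresponding power $p^{-|T|}$ produces at worst the claimed error term $p^{d-1} B(\log B)^{r-1-A}$.

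The main obstacle will be ensuring the generalised Lang--Weil estimate is applied with truly uniform constants, independent of $\zz$ and $p$: the fibre $\CY_{0,\zz}(\sigma)$ varies with $\zz$, and although its generic codimension is $\geqslant 2$ whenever $h_\sigma(\zz) \not\equiv 0 \pmod p$, establishing a uniform counting bound requires that the geometric complexity (degrees of defining polynomials) remains bounded uniformly in $\zz$, which it does since $f, g$ are fixed. A secondary technical subtlety is the treatment of the degenerate sub-locus where $f(\zz, \cdot)$ and $g(\zz, \cdot)$ both vanish identically modulo $p$; here the defining conditions produce a codimension-$c$ subvariety of $\BA^r_{\BF_p}$ with $c \geqslant 1$ (for $p$ not dividing finitely many exceptional primes where $p \mid f$ or $p \mid g$ in $\BZ[\XX]$), and a residue-class counting argument shows this case contributes strictly less than the stated error.
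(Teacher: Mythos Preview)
Your skeleton matches the paper's: fix an admissible ordering, split $\XX=(\YY,\ZZ)$ with $\YY\in\BN_{\neq 0}^r$ and $\ZZ\in\BN_{\neq 0}^d$, bound the number of admissible $\ZZ$ in the box $\prod_j[1,L_j]$ (with $L_j=\YY^{E_\sigma(j)}$) via the residue count $\rho(p;\YY)$, then sum over $\YY$ using $\prod_j L_j=\prod_i Y_i^{a_i(\sigma)-1}$ and Lemma~\ref{le:sublemma}.

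The paper is more economical in two places. First, rather than expanding $\prod_j(L_j/p+1)$ over subsets $T\subseteq[d]$, it bounds the number of side-$p$ cubes covering the box directly by $\prod_j L_j/p^d+\prod_j L_j/\min_j L_j$ and invokes $\min_j L_j\geqslant(\log B)^A$ once; this is equivalent to your expansion but avoids the case analysis. Second --- and this is the substantive difference --- the paper simply asserts the Lang--Weil bound $\rho(p;\YY)\ll p^{d-2}$ uniformly for \emph{all} $\YY$ with $h_\sigma(\YY)\neq 0$, with no generic/exceptional split, and this already yields $B(\log B)^{r-1}/p^2+p^{d-2}B(\log B)^{r-1-A}$, slightly sharper than the stated proposition.

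Your caution about the case $p\mid h_\sigma(\YY)$, where the mod-$p$ fibre could in principle have dimension $d-1$, points to a genuine subtlety the paper does not spell out. That said, your proposed fix is itself incomplete: to control the $T=[d]$ exceptional contribution you need
\[
\sum_{\substack{\YY:\,p\mid h_\sigma(\YY)\\\YY^{D_0(\sigma)}\leqslant B}}\prod_{i=1}^r Y_i^{a_i(\sigma)-1}\ll p^{-1}B(\log B)^{r-1},
\]
and the observation that $\{h_\sigma\equiv 0\}\subset\BF_p^r$ has $O(p^{r-1})$ points does not yield this on its own --- it requires an equidistribution-in-residue-classes statement for the weighted $r$-variable sum under a polynomial congruence, which is not among the results you invoke. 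The degenerate sub-locus where $\rho(p;\YY)=p^d$ is likewise only sketched. (Incidentally, the fibrewise point-count here uses the classical Lang--Weil estimate \cite{Lang-Weil}, not Proposition~\ref{co:Lang-Weil}, which addresses a different situation.)
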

\begin{remark}
	Theorem \ref{thm:effectiveEEUT} plus standard sieving techniques provide similar estimates, the second term being of the form $p^{\gamma_1}B(\log B)^{r-1-\iota_1}$, with some large $\gamma_1>0$ and small $\iota_1>0$. However, the arbitrarily large $\log B$-saving turns out to be crucial in our proof of Theorem \ref{thm:geomsieve1} below.
\end{remark}
\subsection{Proof of Theorem \ref{thm:geomsieve1} assuming Propositions \ref{prop:geomsieve1} and \ref{prop:geomsieve2}}
 We begin by assuming $N<(\log B)^2$. We consider the quantity 
  $$\Pi_1(B):=\sum_{\sigma\in\triangle_{\max}}\#\{\XX\in\CA_\sigma(B):h_\sigma(\XX)=0\}.$$ By Proposition \ref{prop:subvar}, we have
  $$\Pi_1(B) \ll B(\log B)^{r-2}\log \log B.$$
 		Applying Propositions \ref{prop:geomsieve1}  and \ref{prop:geomsieve2},  we obtain
 	\begin{align*}
 		&\#\{\XX\in\CA(B): \text{there exists } p\geqslant N,p\mid \gcd(f(\XX),g(\XX))\}\\ &\leqslant \Pi_1(B)+\sum_{\sigma\in\triangle_{\max}}\#\left(\CA_\sigma(B)\setminus\CA_{\sigma}^{(A)}(B)\right)+\sum_{\sigma\in\triangle_{\max}}\left(\#\CK_{\sigma}^{(A)}(B;(\log B)^2)+\sum_{N\leqslant p\leqslant (\log B)^2}\#\CK_{\sigma}^{(A)}(B;\{p\})\right)\\ &\ll_A B(\log B)^{r-2}\log \log B+\frac{B(\log B)^{r-1}}{N\log N}+B(\log B)^{r-2-A+2d},
 	\end{align*}
 	and then by fixing $A=2d$, we achieve the desired estimate.

Now assume $N\geqslant (\log B)^2$. We deduce directly from Proposition \ref{prop:geomsieve1} with $A=2$  that 
\begin{align*}
	&\#\{\XX\in\CA(B): \text{there exists } p\geqslant N,p\mid \gcd(f(\XX),g(\XX))\}\\ &\leqslant \sum_{\sigma\in\triangle_{\max}}\#(\CA_\sigma(B)\setminus\CA_\sigma^{(2)}(B))+\sum_{\sigma\in\triangle_{\max}}\#\CK_{\sigma}^{(2)}(B;N)\\ &\ll \frac{B(\log B)^r}{N\log N}+B(\log B)^{r-2}\log \log B=O(B(\log B)^{r-2}\log \log B),
\end{align*} since the second error term dominates the first one. 
This completes the proof of Theorem \ref{thm:geomsieve1}. \qed

\subsection{Proof of Proposition \ref{prop:geomsieve1}}

We need a quantitative and uniform version of Ekedahl's sieve \cite{Ekedahl} (cf. also \cite[Lemma 3.1 \& Theorem 3.3]{Bhargava}) for affine spaces,  allowing boxes with unequal side-lengths. 
We define the height of a polynomial to be the maximum of the absolute values of its coefficients.
\begin{lemma}[Browning--Heath-Brown \cite{Browning-HB}, Lemma 2.1]\label{le:EkedahlBrHB}
	Let $m\geqslant 2,d\geqslant 1$ be fixed. Then uniformly for any $B_1,\cdots,B_m\geqslant 1,Q,N\geqslant 2$, and for any polynomials $f_1,f_2\in\BZ[X_1,\cdots,X_m]$ which are coprime, of degree $\leqslant d$ and have height $\leqslant Q$, we have
	\begin{multline*}
		\#\{\XX\in\BZ^m:\text{ for every }1\leqslant  i\leqslant m,|X_i|\leqslant B_i \text{ and there exists } p>N,p\mid \gcd(f_1(\XX),f_2(\XX))\}\\
		\ll \FB\log (\FB Q)\left((N\log N)^{-1}+(B_{\min})^{-1}\right),
	\end{multline*}
 where $$\FB:=\prod_{i=1}^{m}B_i,\quad B_{\min}:=\min_{1\leqslant i\leqslant m}(B_i).$$ The implied constant only depends on $m,d$.
\end{lemma}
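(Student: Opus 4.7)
The plan is to emulate Ekedahl's classical sieve by slicing perpendicular to the smallest coordinate and using resultants. After permuting coordinates assume $B_m = B_{\min}$. If $f_1$ or $f_2$ does not involve $X_m$ then the sieve condition factors through a projection, reducing the dimension, so one may assume both polynomials have positive degree in $X_m$. Since $f_1, f_2$ are coprime in $\BZ[X_1, \ldots, X_m]$, their resultant $R := \operatorname{Res}_{X_m}(f_1, f_2) \in \BZ[X_1, \ldots, X_{m-1}]$ is a nonzero polynomial of total degree $\ll_d 1$ and coefficients of height polynomial in $Q$ (with exponent depending on $d$), whence $\log |R(\ZZ)| \ll_{m,d} \log(Q \FB)$ for every $\ZZ = (X_1, \ldots, X_{m-1})$ in the box. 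Partition the $\XX$ to be counted by their projection $\ZZ$, and split the analysis according to whether $R(\ZZ) \neq 0$ (Case A) or $R(\ZZ) = 0$ (Case B).

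In Case A, every prime $p > M$ dividing both $f_1(\XX)$ and $f_2(\XX)$ must divide the nonzero integer $R(\ZZ)$; hence the number of relevant primes is $\leq \log|R(\ZZ)|/\log M \ll \log(Q\FB)/\log M$. For each such $p$, at least one of the univariate polynomials $g_j(X_m) := f_j(\ZZ, X_m)$ is nonzero modulo $p$, so has at most $d$ roots in $\BF_p$, contributing $\ll d(B_m/p + 1)$ admissible values of $X_m \in [-B_m, B_m]$. Summing first over the primes $p \mid R(\ZZ)$ with $p > M$ and then over the $\ll \FB/B_m$ slices $\ZZ$ yields a total contribution of $\FB \log(Q\FB)/(M \log M) + (\FB/B_{\min}) \log(Q\FB)/\log M$, which is absorbed by the right-hand side of the lemma (the second summand because $1/\log M \leq 1/\log 2$).

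In Case B no useful arithmetic constraint is available on $X_m$, so we use the trivial count $\leq 2B_m + 1$ per slice; the crucial task is to bound the number of integer $\ZZ$ in the $(m-1)$-box with $R(\ZZ) = 0$. Select some $i_0 < m$ in which $R$ has positive degree (such $i_0$ must exist whenever Case B is nonempty, else $R$ would be a nonzero constant on the box); for each fixing of the remaining variables, $R$ becomes a nonzero univariate polynomial in $X_{i_0}$ of degree $\ll_d 1$ having $\ll_d 1$ integer roots, giving $\#\{\ZZ : R(\ZZ) = 0\} \ll_{m,d} \FB/(B_m B_{i_0})$. Multiplying by $B_m$ produces a Case B contribution of $\ll \FB/B_{i_0} \leq \FB/B_{\min}$, which fits inside the second term of the lemma. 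The main obstacle to the argument is maintaining uniformity in $Q$ and $\FB$: one must verify the estimate $\log|R(\ZZ)| \ll \log(Q\FB)$ with implied constant depending only on $m, d$ (done by standard Macaulay-style degree and height bounds for resultants), and one must ensure that the resultant is nonzero for \emph{some} pivot variable irrespective of how degree is distributed among the coordinates of $f_1, f_2$ (a short combinatorial case analysis on the variables appearing in each $f_j$, possibly iterating the slicing argument). Combining Cases A and B yields the announced bound.
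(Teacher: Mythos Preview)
The paper does not give its own proof of this lemma; it is quoted verbatim from Browning--Heath-Brown \cite{Browning-HB}. Your resultant-and-slicing outline is indeed the approach of that reference (and of Bhargava and Ekedahl before them), so there is no meaningful methodological difference to report.

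There is, however, one genuine gap in your Case~A. You assert that for each prime $p\mid R(\ZZ)$ ``at least one of the univariate polynomials $g_j(X_m):=f_j(\ZZ,X_m)$ is nonzero modulo $p$''. This can fail. Take $m=3$, $f_1=X_1X_3+X_2$ and $f_2=X_1X_3^2+X_2$; these are coprime and have positive $X_3$-degree, yet at $\ZZ=(p,p)$ both $g_1,g_2$ reduce to $0\pmod p$ while $R(\ZZ)=2p^3\neq 0$. For such $(\ZZ,p)$ your root count $O_d(B_m/p+1)$ is invalid: every $X_m$ in the box contributes. The standard repair is to observe that when $g_1\equiv 0\pmod p$ the prime $p$ must also divide the leading $X_m$-coefficient $\ell_1(\ZZ)$ of $f_1$, a nonzero polynomial in fewer variables with the same height control; one then bounds the contribution from pairs $(\ZZ,p)$ with $p\mid\gcd(R(\ZZ),\ell_1(\ZZ))$ by a separate (and easier) count, or runs the whole argument inductively on $m$. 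Your closing remark about ``iterating the slicing argument'' addresses a different issue (nonvanishing of $R$ itself), not this mod-$p$ degeneracy, so you should add this case explicitly.
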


Now let us start the proof of Proposition \ref{prop:geomsieve1}.
For each fixed $\YY\in\BZ_{>0}^r$, Define the set
$$\CK_{\sigma,\YY}(B;N):=\{\ZZ\in\BZ_{>0}^d:Z_j\leqslant \YY^{E_\sigma(j)},1\leqslant j\leqslant d \text{ and there exists } p\geqslant N,p\mid\gcd( f(\YY,\ZZ),g(\YY,\ZZ))\}.$$
Then by the construction of $h_\sigma$,  for each $\YY\in\BZ_{>0}^r$ satisfying 
\begin{equation}\label{eq:condition1}
	\max_{1\leqslant i\leqslant r}Y_i\leqslant B,\quad  \YY^{D_0(\sigma)}\leqslant B, \quad h_\sigma(\YY)\neq 0,\quad  \min_{1\leqslant j\leqslant d}\YY^{E_\sigma(j)}\geqslant (\log B)^A,
\end{equation}  the polynomials  $f(\YY,X_{r+1},\cdots,X_{r+d}),g(\YY,X_{r+1},\cdots,X_{r+d})\in\BZ[X_{r+1},\cdots,X_{r+d}]$ are coprime and have height $\ll B^{\max(\deg f,\deg g)}$.
Applying Lemma \ref{le:EkedahlBrHB} we get
\begin{align*}
	\#\CK_{\sigma,\YY}(B;N)&\ll \left(\prod_{j=1}^{d}\YY^{E_\sigma(j)}\right)\log \left(B^{\max(\deg f,\deg g)}\prod_{j=1}^{d}\YY^{E_\sigma(j)}\right)\left((N \log N)^{-1}+\left(\min_{1\leqslant i\leqslant d}\YY^{E_\sigma(j)}\right)^{-1}\right) \\ &\ll \log B\left((N \log N)^{-1}+(\log B)^{-A}\right)\left(\prod_{i=1}^{r}Y_i^{a_i(\sigma)-1}\right).
\end{align*} 
The implied constant depends only on the degree of $f,g$.
Therefore we obtain
\begin{align*}
	\#\CK_{\sigma}^{(A)}(B;N)&=\sum_{\substack{\YY\in\BZ_{>0}^r\\\eqref{eq:condition1}\text{ holds}}}\#\CK_{\sigma,\YY}(B;N)\\ 	&\ll \left(\frac{\log B}{N \log N}+(\log B)^{1-A}\right)\sum_{\substack{\YY\in\BZ_{>0}^r\\\max_{1\leqslant i\leqslant r}Y_i\leqslant B, \YY^{D_0(\sigma)}\leqslant B }}\prod_{i=1}^{r}Y_i^{a_i(\sigma)-1}\\ &\ll\frac{B(\log B)^r}{N\log N}+B(\log B)^{r-A},
\end{align*} by Lemma \ref{le:sublemma}.
We get the desired estimate.
\qed

\subsection{Proof of Proposition \ref{prop:geomsieve2}}\label{se:proofpropgeomsieve2}
Let $p$ be a fixed prime. We define likewise in the proof of Proposition \ref{prop:geomsieve1}, for each $\YY\in\BZ_{>0}^r$ $$\CK_{\sigma,\YY}(B;\{p\}):=
\{\ZZ\in\BZ_{>0}^d:Z_j\leqslant \YY^{E_\sigma(j)},1\leqslant j\leqslant d,p\mid\gcd( f(\YY,\ZZ),g(\YY,\ZZ))\}.$$
We now estimate $\CK_{\sigma,\YY}(B;\{p\})$ for every fixed $\YY\in\BZ_{>0}^d$ satisfying  \eqref{eq:condition1}. Employing the Lang-Weil estimate \cite{Lang-Weil}, we have, uniformly any such $\YY$, \begin{equation}\label{eq:LWcodim2}
	\#\{\bxi\in (\BF_p)^d:f(\YY,\bxi)\equiv g(\YY,\bxi)\equiv 0\bmod p\}\ll p^{d-2}.
\end{equation} Here the implied constant depends only on the degree of $f,g$. 
Now we break the $d$-dimensional cube $\prod_{j=1}^{d}\left[1, \YY^{E_\sigma(j)}\right]$ into a union of closed integral squared-cubes of side-length $p$, the number of which being 
$$\ll\frac{\prod_{j=1}^d \YY^{E_\sigma(j)}}{p^d}+\frac{\prod_{j=1}^d \YY^{E_\sigma(j)}}{\min_{1\leqslant j\leqslant d}\YY^{E_\sigma(j)}}\leqslant \left(p^{-d}+(\log B)^{-A}\right)\prod_{j=1}^d \YY^{E_\sigma(j)}.$$ Applying \eqref{eq:LWcodim2} together with the Chinese remainder theorem, we have
$$\CK_{\sigma,\YY}(B;\{p\})\ll p^{d-2}\left(p^{-d}+(\log B)^{-A}\right)\prod_{j=1}^d \YY^{E_\sigma(j)}= \left(\frac{1}{p^2}+\frac{p^{d-2}}{(\log B)^A}\right)\prod_{i=1}^{r}Y_i^{a_i(\sigma)-1}.$$

We are now ready to estimate $\#\CK_{\sigma}^{(A)}(B;\{p\})$ as follows.
\begin{align*}
	\#\CK_{\sigma}^{(A)}(B;\{p\})&=\sum_{\substack{\YY\in\BZ_{>0}^r\\ \eqref{eq:condition1}\text{ holds}}}\CK_{\sigma,\YY}(B;\{p\})\\&\ll \left(\frac{1}{p^2}+\frac{p^{d-2}}{(\log B)^A}\right)\sum_{\substack{\YY\in\BZ_{>0}^r\\\max_{1\leqslant i\leqslant r}Y_i\leqslant B, \YY^{D_0(\sigma)}\leqslant B }}\prod_{i=1}^{r}Y_i^{a_i(\sigma)-1}\\ &\ll\frac{B(\log B)^{r-1}}{p^2}+p^{d-2}B(\log B)^{r-1-A},
\end{align*} by using Lemma \ref{le:sublemma}.
\qed

\section{Applications to counting rational points with local conditions}\label{se:application}
In this section we apply the sieving results (Theorems \ref{thm:Tamagawazero}, \ref{thm:keyOmega} and \ref{thm:Selbergsieve}) developed in \S\ref{se:equidistgeomsieve} to prove Theorems \ref{thm:ratptsfibration} and \ref{thm:primevalue}.
\subsection{Paucity of rational points arising from fibrations -- Tamagawa measure zero cases}\label{se:appTamagawazero}
We begin by recalling the following general theorem scattered in the literature \cite{Cohen, Serre, BBL,Browning-Loughran,Loughran-Smeets} measuring the local contribution of rational points arising from fibrations.
\begin{theorem}\label{thm:locsolfibre}
Let $f_0:Y_1\to Y_2$ be a proper dominant morphism between smooth proper geometrically integral varieties over $k$.  Let $\widetilde{f}_0:\CY_1\to\CY_2$ be an extension of $f_0$ between proper regular  integral models of $Y_1,Y_2$ over $\CO_S$ where $S$ is a finite set of places containing $\infty_k$ such that $\CY_2(\CO_{\nu})\neq\varnothing$ for every $\nu\notin S$. 
\begin{enumerate}[label=(\roman*)]
	\item  Assume that $f_0$ is generically finite of degree $>1$. Then there exist a set $\CP(f_0)$ of places disjoint from $S$ of positive density and $c(f_0)\in \mathopen]0,1[$ such that for every $\nu\in \CP(f_0)$,  \begin{equation}\label{eq:est0}
		\#\left(\widetilde{f}_0(\CY_1(\BF_\nu))\right)\leqslant c(f_0)\#\CY_2(\BF_\nu).
	\end{equation}
	\item (D. Loughran \emph{et al}) Assume that the generic fibre of $f_0$ is geometrically integral\footnote{This implies that the generic fibre is smooth by the generic smoothness criterion \cite[III. Corollary 10.7]{Hartshorne}.} and that there exists at least one non-pseudo-split fibre above $Y_2^{(1)}$.  Then for every $\nu\not\in S$, we have \begin{equation}\label{eq:esta}
	1-\frac{\#\left(\widetilde{f}_0(\CY_1(\CO_\nu))\bmod \mathfrak{m}_\nu^2\right)}{\#\CY_2(\CO_\nu/\mathfrak{m}_\nu^2)}\ll \frac{1}{\#\BF_\nu}.
\end{equation} And there exists $\varDelta(f_0)>0$ such that \begin{equation}\label{eq:est1}
			\sum_{\substack{\nu\not\in S\\ \#\BF_\nu\leqslant N}}\left(1-\frac{\#\left(\widetilde{f}_0(\CY_1(\CO_\nu))\bmod \mathfrak{m}_\nu^2\right)}{\#\CY_2(\CO_\nu/\mathfrak{m}_\nu^2)}\right)\log \#\BF_\nu =\varDelta(f_0) \log N+O(1),
		\end{equation}
		\begin{equation}\label{eq:est2}
			\prod_{\substack{\nu\not\in S\\ \#\BF_\nu\leqslant N}}\frac{\#\CY_2(\CO_\nu/\mathfrak{m}_\nu^2)}{\#\left(\widetilde{f}_0(\CY_1(\CO_\nu))\bmod \mathfrak{m}_\nu^2\right)}\asymp (\log B)^{\varDelta(f_0)}.
		\end{equation}
\end{enumerate}
\end{theorem}
\begin{proof}
	Part (i) follows from the effective Hilbert irreducibility theorem (cf. \cite[Theorem 5.1, Lemma 5.2]{Cohen} and \cite[\S3.4]{Serre}). 
	Part (ii) is \cite[Corollary 2.4, Proposition 3.10]{Browning-Loughran} and \cite[Theorem 2.8]{Loughran-Smeets}.
\end{proof}
\begin{corollary}\label{co:paucityfibra}
	Let $V/k$ be as in Setting \ref{hyp:almost-Fano} and satisfy Hypothesis \ref{hyp:deltaB}. Let $f:Y\to V$ be a proper dominant morphism from $Y$ a smooth proper geometrically integral variety over $k$. Assume that either 
	\begin{enumerate}[label=(\roman*)]
			\item  $f$ is generically finite of degree $>1$; or 
				\item $f$ has geometrically integral generic fibre, and there exists at least one non-pseudo-split fibre above $V^{(1)}$. 
	\end{enumerate}
Recall the counting function \eqref{eq:countingDelta}. Then  as $B\to\infty$, $$\CN_{V}(f(Y(\RA_k));B)=o(1).$$
\end{corollary}
\begin{proof}
		We may assume that $Y(\RA_k)\neq\varnothing$ otherwise the estimate is trivial. For each $\nu\in\mathfrak{M}(k)$, the set $f(Y(k_\nu))$ is measurable and $\omega_\nu^V$-continuous. This follows from the theorems of Tarski--Seidenberg (for the archimedean places) and Marcintyre (the non-archimedean places) (cf. the proof of \cite[Lemma 3.9]{BBL}, which is stated for $V=\BA^n$ or $\BP^n$, but the same argument works for the general case where the base is an arbitrary algebraic variety). Moreover, as every smooth point admits an open neighbourhood (since the non-smooth locus is proper and Zariski closed), this shows that the family $(f(Y(k_\nu))\subset V(k_\nu))_{\nu\in\mathfrak{M}(k)}$ satisfies \eqref{eq:posmeas}. 
		
		Now we take a proper model $\widetilde{f}:\CY\to\CV$ of $f$ over $\CO_S$ where $S\subset \mathfrak{M}(k)$ is finite and contains $\infty_k$, such that $\CY,\CV$ are smooth over $\CO_S$, and that $(\omega_\nu^V)_{\nu\notin S}$ is the family of model measures attached to $K_{\CV}^{-1}\to\CV$.
		Recall the map \eqref{eq:redmodpk}. For every $\nu\notin S$ we take 
\begin{equation}\label{eq:case1}
			\Omega_\nu:=\begin{cases}
				\operatorname{Mod}_{\nu,1}^{-1}(\widetilde{f}(\CY(\BF_\nu))) & \text{ case (i)};\\ 	\operatorname{Mod}_{\nu,2}^{-1}\left(\widetilde{f}(\CY(\CO_\nu))\bmod \mathfrak{m}_\nu^2\right) & \text{ case (ii)}.
			\end{cases}
\end{equation}
		By Hensel's lemma (e.g. \cite[Lemma 2.1]{Browning-Loughran}) and  by  Lemma \ref{le:modelmeasurecomp}, we have for $\nu\notin S$,
		\begin{equation}\label{eq:case2}
			\frac{\omega_\nu^V(\Omega_{\nu})}{\omega_\nu^V(V(k_\nu))}=\begin{cases}
			\frac{\#\left(\widetilde{f}(\CY(\BF_\nu))\right)}{\#\CV(\BF_\nu)} & \text{ case (i)};\\
			\frac{\#\left(\widetilde{f}(\CY(\CO_\nu))\bmod \mathfrak{m}_\nu^2\right)}{ \#\CV(\CO_\nu/\mathfrak{m}_\nu^2)}& \text{ case (ii)}.
		\end{cases}
		\end{equation}
		It follows from Theorem \ref{thm:locsolfibre} (i) for case (i) and from Theorem \ref{thm:locsolfibre} (ii) for case (ii) that the infinite product $\prod_{\nu\in\mathfrak{M}(k)}\frac{\omega_\nu^V(\Omega_{\nu})}{\omega_\nu^V(V(k_\nu))}$ diverges to zero in both cases. 
		Since in all cases we have, whenever $\nu\notin S$,
		$$f(Y(k_\nu))=\widetilde{f}(\CY(\CO_{\nu}))\subset \Omega_{\nu}.$$ 
		On taking $\Omega_{\nu}=V(k_{\nu})$ for $\nu\in S$, we conclude by Theorem \ref{thm:Tamagawazero} that \begin{equation*}
			\CN_{V}(f(Y(\RA_k));B)\leqslant\CN_V\left(\CE[(\Omega_{\nu})];B\right)=o(1).\qedhere
		\end{equation*}
\end{proof}

The goal in the rest of this section is to prove Theorem \ref{thm:ratptsfibration} (1) \& (2a),  as an effective version of Corollary \ref{co:paucityfibra} where the base variety is any smooth proper split toric variety $X$ over $\BQ$ with globally generated $K_X^{-1}$.  For this we fix  $\widetilde{f}:\CY\to \CX$ a proper model of the morphism $f$ over $\BZ_S$ where $S$ a finite set of places containing $\BR$.  
\begin{proof}[Proof of Theorem \ref{thm:ratptsfibration} (1)]	
 Let the set of primes $\CP(f)$ and the constant $c(f)\in\mathopen ]0,1[$ be as in Theorem \ref{thm:locsolfibre} (i).  For every prime $p\in\CP(f)$, let the open-closed subset $\Omega_p\subset\CX(\BZ_p)$ be defined by \eqref{eq:case1}.
Then by Theorem \ref{thm:nonarchTmeas} and \eqref{eq:case2}, we have
$$\frac{\omega_{\operatorname{tor},p}^X(\Omega_p^c)}{\omega_{\operatorname{tor},p}^X(\Omega_p)}=\frac{\#\left(\CX(\BF_p)\setminus \widetilde{f}(\CY(\BF_p))\right)}{\# \widetilde{f}(\CY(\BF_p))}\geqslant c(f)^{-1}-1>0.$$
Hence  for every $N\geqslant 1$,  function $G$ \eqref{eq:GN} satisfies
\begin{align*}
	G(N)&=\sum_{\substack{j<N\\p\mid j\Rightarrow p\in \CP(f)}}\prod_{p\mid j }\frac{\omega_{\operatorname{tor},p}^X(\Omega_p^c)}{\omega_{\operatorname{tor},p}^X(\Omega_p)}\geqslant \sum_{\substack{j<N\\p\mid j\Rightarrow p\in \CP(f)}}(c(f)^{-1}-1)^{\#\{p:p\mid j\}}\gg_{\varepsilon} N^{1-\varepsilon}.
\end{align*}
On letting $\Omega_p=X(\BQ_p)$ for all prime $p\notin S\cup \CP(f)$, and applying Corollaries \ref{thm:effectiveEE} and \ref{cor:Selberg} (with $\Delta(B)=\delta_{(V(k)\setminus M)_{\leqslant B}}$), we obtain 
\begin{align*}
	\CN_{\operatorname{loc}}(f;B)&\ll B(\log B)^{r-1}\CN_X(\CE_S[\Omega];B)\\ & \ll_\varepsilon \frac{B(\log B)^{r-1}}{N^{1-\varepsilon}}+N^{2(r+2\dim X+1)+\varepsilon}B(\log B)^{r-\frac{3}{2}+\varepsilon}.
\end{align*}
So it suffices to take $N=(\log B)^\lambda$ with $0<\lambda<(4(r+2\dim X+1))^{-1}$ and fix $\varepsilon$ small enough to conclude.
\end{proof}

\begin{proof}[Proof of Theorem \ref{thm:ratptsfibration} (2a)]
For every prime $p\notin S$, let the open-closed set $\Omega_p\subset\CX(\BZ_p)$ be defined by \eqref{eq:case1}. Then by Theorem \ref{thm:nonarchTmeas} and \eqref{eq:case2},
$$\frac{\omega_{\operatorname{tor},p}^X(\Omega_p^c)}{\omega_{\operatorname{tor},p}^X(\Omega_p)} =\frac{\#\left(\CX(\BZ/p^2\BZ)\setminus\left(\widetilde{f}(\CY(\BZ_p))\bmod p^2\right)\right)}{\#\left(\widetilde{f}(\CY(\BZ_p))\bmod p^2\right)}.$$
By \eqref{eq:esta} in Theorem \ref{thm:locsolfibre} (ii), we have  $$\#\CX(\BZ/p^2\BZ)=\#\left(\widetilde{f}(\CY(\BZ_p))\bmod p^2\right)\left(1+O\left(\frac{1}{p}\right)\right).$$
It follows from \eqref{eq:est1} that there exists a constant $\varDelta(f)>0$ such that  \begin{align*}
	&\sum_{\substack{p\leqslant N,p\not\in S}}\frac{\omega_{\operatorname{tor},p}^X(\Omega_p^c)}{\omega_{\operatorname{tor},p}^X(\Omega_p)}\log p\\ =&\sum_{\substack{p\leqslant N,p\not\in S}}\frac{\#\left(\CX(\BZ/p^2\BZ)\setminus\left(\widetilde{f}(\CY(\BZ_p))\bmod p^2\right)\right)}{\#\CX(\BZ/p^2\BZ)} \log p +O\left(\sum_{p<N}\frac{\log p}{p^2}\right)\\= &\varDelta(f)\log N+O(1).
\end{align*} 
Moreover, by \eqref{eq:est2}, we have $\Omega_{p}\neq X(\BQ_{p})$ for $p\notin S$ and  the function $G$ \eqref{eq:GN} satisfies
\begin{align*}
	G(N)=\sum_{\substack{j<N\\ p\mid j\Rightarrow p\not\in S}}\prod_{p\mid j}\frac{\omega_{\operatorname{tor},p}^X(\Omega_p^c)}{\omega_{\operatorname{tor},p}^X(\Omega_p)}&\leqslant\prod_{p\leqslant N,p\not\in S}\left(1+\frac{\omega_{\operatorname{tor},p}^X(\Omega_p^c)}{\omega_{\operatorname{tor},p}^X(\Omega_p)}\right)\\ &=\prod_{p\leqslant N,p\not\in S}\frac{\#\CX(\BZ/p^2\BZ)}{\#\left(\widetilde{f}(\CY(\BZ_p))\bmod p^2\right)}\asymp (\log N)^{\varDelta(f)}.
\end{align*}
Based on these, an application of \cite[Theorem 1.1]{I-K} shows that there exists $c>0$ such that
$$G(N)=c(\log N)^{\varDelta(f)}+O\left((\log N)^{\varDelta(f)-1}\right).$$ So it follows from Corollaries \ref{thm:effectiveEE} and \ref{cor:Selberg} that in this case 
\begin{align*}
	\CN_{\operatorname{loc}}(f;B)&\ll B(\log B)^{r-1}\CN_X(\CE_S[\Omega];B)\\ &\ll_\varepsilon \frac{B(\log B)^{r-1}}{(\log N)^{\varDelta(f)}}+N^{4(r+2\dim X)+2+\varepsilon}B(\log B)^{r-\frac{3}{2}+\varepsilon}.\end{align*}
We take $N=(\log B)^\lambda$ where $0<\lambda<(8(r+2\dim X+1))^{-1}$ and fix $\varepsilon$ small enough to finish the proof. 
\end{proof}

\subsection{Density of rational points which are $S$-integral or have prime sectional values}
We first present a proof of a generalised Lang-Weil estimate for arbitrary varieties (Proposition \ref{co:Lang-Weil}) which is of independent interest (cf. a consideration of Serre \cite[\S3.6]{Serre} in estimating rational points in thin sets using the large sieve).

Let $L/K$ be an extension of number fields. For any $\mathfrak{p}\in\operatorname{Spec}(\CO_K)$, let us define
\begin{equation}\label{eq:MLK}
	\CM_{L/K}(\mathfrak{p}):=\{\mathfrak{q}\in\operatorname{Spec}(\CO_L):\mathfrak{q}\mid\mathfrak{p},[\BF_{\mathfrak{q}}:\BF_{\mathfrak{p}}]=1\},
\end{equation}
where we write $\BF_{\mathfrak{p}}:=\CO_K/\mathfrak{p}$ for the residue field and similarly $\BF_{\mathfrak{q}}:=\CO_L/\mathfrak{q}$. For instance, if $L/K$ is Galois and is unramified at $\mathfrak{p}$, then $\CM_{L/K}(\mathfrak{p})\neq\varnothing$ if and only if $\mathfrak{p}$ splits completely in $L$, in which case $\#\CM_{L/K}(\mathfrak{p})=[L:K]$.
\begin{proposition}\label{co:Lang-Weil}
	Let $Y$ be an algebraic variety (i.e. a separated reduced scheme of finite type) over a number field $k$ and let $\CY$ be an integral model of $Y$ over $\CO_{k,S}$, where $S\subset \mathfrak{M}(k)$ is a finite set of places containing the archimedean ones. Then there exists a finite collection $(k_j)_{j\in J}$ of finite extensions of $k$ such that for any $\mathfrak{p}\in \operatorname{Spec}(\CO_{k,S})$,
	\begin{equation}\label{eq:Lang-Weil}
		\#\CY(\BF_{\mathfrak{p}})=\left(\sum_{\substack{j\in J}}\#\CM_{k_j/k}(\mathfrak{p})\right)(\#\BF_{\mathfrak{p}})^{\dim Y}+O((\#\BF_{\mathfrak{p}})^{\dim Y-\frac{1}{2}}),
	\end{equation}
	where the implied constant depends only on $\CY$.
\end{proposition}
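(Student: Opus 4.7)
The plan is to reduce the problem to the classical Lang--Weil bound for geometrically integral varieties over finite fields, by combining an irreducible component decomposition with Lemma \ref{le:etalecomp} to handle the arithmetic of the field of constants.

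First, I would decompose $Y$ into its irreducible components $Y=\bigcup_i Y_i$, reducing modulo the primes of $\CO_{k,S}$ via the corresponding decomposition of (an enlargement of) $\CY$. Components of dimension strictly less than $\dim Y$ contribute $O((\#\BF_\mathfrak{p})^{\dim Y-1})$ by induction on dimension, and are therefore absorbed into the error term. Replacing each top-dimensional component by its normalization changes $\CY$ only on a closed subscheme of strictly smaller dimension, which is again absorbed. Hence I may assume $Y$ is integral and normal.

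Next, I apply Lemma \ref{le:etalecomp} to obtain a finite extension $k_Y/k$, realised as a subfield of $K(Y)$, through which the structure morphism $Y\to\operatorname{Spec}(k)$ factors and over which $Y$ is geometrically integral. I enlarge $S$ (finitely) so that the integral model $\CY$ spreads out to a smooth model over $\CO_{k_Y,S}$ with geometrically integral fibres, and so that $k_Y/k$ is unramified outside $S$. For $\mathfrak{p}\notin S$, an $\BF_\mathfrak{p}$-point of $\CY$ viewed over $\CO_k$ corresponds bijectively to the data of a ring map $\CO_{k_Y}\to\BF_\mathfrak{p}$ together with an $\BF_\mathfrak{p}$-point of the corresponding fibre; since $\BF_\mathfrak{p}$ is a field, such a map factors through $\CO_{k_Y}/\mathfrak{q}=\BF_\mathfrak{q}$ for a unique prime $\mathfrak{q}\mid\mathfrak{p}$, and the map $\BF_\mathfrak{q}\hookrightarrow\BF_\mathfrak{p}$ forces $[\BF_\mathfrak{q}:\BF_\mathfrak{p}]=1$, i.e.\ $\mathfrak{q}\in\CM_{k_Y/k}(\mathfrak{p})$. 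This yields
$$\#\CY(\BF_\mathfrak{p})=\sum_{\mathfrak{q}\in\CM_{k_Y/k}(\mathfrak{p})}\#\bigl(\CY\times_{\CO_{k_Y}}\BF_\mathfrak{q}\bigr)(\BF_\mathfrak{p}).$$

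Then I would apply the classical Lang--Weil estimate \cite{Lang-Weil} to each geometrically integral $\BF_\mathfrak{q}$-variety $\CY\times_{\CO_{k_Y}}\BF_\mathfrak{q}$, obtaining $(\#\BF_\mathfrak{p})^{\dim Y}+O((\#\BF_\mathfrak{p})^{\dim Y-1/2})$. Summing over $\mathfrak{q}\in\CM_{k_Y/k}(\mathfrak{p})$ produces the announced leading term $\#\CM_{k_Y/k}(\mathfrak{p})\,(\#\BF_\mathfrak{p})^{\dim Y}$, and gathering the contributions of all top-dimensional components indexes the set $J$ and the fields $(k_j)_{j\in J}$. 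The main delicate point will be ensuring that the implied constant in the Lang--Weil error is uniform in $\mathfrak{p}$: this follows from the fact that, after spreading out, the classical Lang--Weil constant for a fixed scheme over a Dedekind base is bounded uniformly across closed fibres (e.g.\ via Bertini-type arguments giving uniform bounds on the number of geometric components and degrees of defining equations). The residual finitely many primes in the enlarged $S$ are absorbed into the implied constant using the trivial estimate $\#\CY(\BF_\mathfrak{p})=O((\#\BF_\mathfrak{p})^{\dim Y})$.
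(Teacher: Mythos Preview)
Your proposal is correct and follows essentially the same route as the paper: decompose into top-dimensional irreducible components, reduce to the normal case (the paper restricts to the normal locus, you pass to the normalization---either works), apply Lemma~\ref{le:etalecomp} to factor through the field of constants $k_Y$, spread out to a model with geometrically integral fibres, and invoke the relative Lang--Weil estimate uniformly in $\mathfrak{p}$. One small slip: you ask that the model spread out to a \emph{smooth} model over $\CO_{k_Y,S}$, but since $Y$ is only assumed normal this need not hold, and in any case Lang--Weil requires only geometric integrality of the fibres, not smoothness.
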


\begin{proof}[Proof of Proposition \ref{co:Lang-Weil}]
	Let $(Y_j)_{j\in J}$ be the irreducible components of $Y$ over $k$ of the maximal dimension. Upon considering the normal locus (which is open dense), by \cite[Theorem~7.7.1 (i)]{Poonen}, we may assume that each $Y_j$ is normal. Let $k_j$ be the algebraic closure of $k$ in the function field of $Y_j$. (This is the so-called \emph{constant field} of $Y_j$, cf. \cite[\S2.2.4]{Poonen}). By \cite[Propositions~2.2.19, 2.2.22]{Poonen}, $Y_j$ is geometrically irreducible over $k_j$.
	Every $Y_j$ spreads out to a model $\CY_j$ over $\CO_{k_j,S_j^\prime}$ with geometrically irreducible fibres, where $S_j^\prime$ is a finite set of primes of $k_j$ containing all the primes lying over $S$.
	
	Now for every $\mathfrak{p}\notin S$ which does not lift to any prime in $\cup_{j\in J}S_j^\prime$, the existence of a $\BF_{\mathfrak{p}}$-point of $\CY_j$ (over $\CO_{k,S}$) is equivalent to the existence of a $\BF_{\mathfrak{q}}$-point of $\CY_j$ (over $\CO_{k_j,S_j^\prime}$) for a certain $\mathfrak{q}\in\CM_{k_j/k}(\mathfrak{p})$. Then Equation \eqref{eq:Lang-Weil} follows from applying the relative version of the Lang-Weil estimate \cite[Theorem~7.7.1 (iii)]{Poonen} to each $\CY_j$ and their intersections. Upon enlarging the implied constant, Equation \eqref{eq:Lang-Weil} still holds for the primes of $\CO_{k,S}$ lying below $\cup_{j\in J}S_j^\prime$.
\end{proof}
\begin{remark}
	In fact, for every $j\in J$, for every $\mathfrak{p}$ of $\CO_{k,S}$, $\#\CM_{k_j/k}(\mathfrak{p})$ is exactly the number of irreducible components of $\CY_j \times_{\operatorname{Spec}(\CO_{k,S})} \operatorname{Spec}(\BF_{\mathfrak{p}})$ which are geometrically irreducible. However, the extension $k_j/k$ need not be Galois. \end{remark}

\begin{proof}[Proof of Theorem \ref{thm:primevalue}]
We fix $S_0$ a finite set places such that $\mathbf{X}\times_{\BZ}\BZ_{S_0}\simeq\CX\times_{\BZ}\BZ_{S_0}$, where as before $\CX$ is the canonical toric integral model of $X$ over $\BZ$. 

\emph{Estimation of $\CN_{(1)}(Z;B)$.}
Upon enlarging $S$ if necessary, we may assume $S_0\subset S$, and that  for every $p\notin S$, we have \begin{equation}\label{eq:ZFpXFp}
	\#\mathbf{Z}(\BF_p)<\#\CX(\BF_p). 
\end{equation}
Recall the map \eqref{eq:redmodpk}. 
For every prime $p\notin S$, let $$\Omega_p:=\CX(\BZ_p)\setminus \operatorname{Mod}_{p,1}^{-1}(\mathbf{Z}(\BF_p)).$$ Then it is clear that
$$P\in\Omega_p\Longleftrightarrow \widetilde{P}\bmod p\notin\mathbf{Z}(\BF_p).$$
Recall the counting function \eqref{eq:countingDelta} and the notation in \S\ref{se:Selbersieve}.
We then have  \begin{equation}\label{eq:splitpN1} \CN_{(1)}(Z;B)\ll  B(\log B)^{r-1}\CN_X(\CE^{\CX}_{(\mathfrak{M}(\BQ)\setminus S)}[(\Omega_\nu)];B).\end{equation}

Now we seek to apply the Selberg's sieve (Theorem \ref{thm:Selbergsieve}). For this we need to derive a lower bound estimate for the function $G$ \eqref{eq:GN}. 
Let $(k_j/\BQ)_{j\in J}$ be the collection of finite extensions for $Z$ in  Proposition \ref{co:Lang-Weil}. Let $L$ be the normal closure of the composite field $\prod_{j\in J}k_j$, and let $\CP_{S}$ be the set of primes not in $S$ which split completely in $L$. Then by the Chebotarev density theorem \cite[Corollary 13.6]{Neukirch}, $\CP_{S}$ has natural density $[L:\BQ]^{-1}$, and for every $p\in \CP_{S}$, $p$ also splits completely in each $k_j$, and hence $\#\CM_{k_j/\BQ}(p)=[k_j:\BQ]$.

We define the multiplicative arithmetic function $f$ supported at square-free integers by $$f(p):=\begin{cases}
	\frac{\omega_{\operatorname{tor},p}^X(\Omega_p^c)}{\omega_{\operatorname{tor},p}^X(\Omega_p)}&\text{ if } p\in \CP_{S};\\ 0 &\text{ otherwise}.
\end{cases}$$
Note that $\Omega_p\neq X(\BQ_p)$ for every $p\in \CP_{S}$ and so $0<f(p)<1$ for $p\in\CP_{S}$. For every $N\geqslant 2$ such that $p\in S\Rightarrow p<N$, we clearly have
$$G(N)=\sum_{\substack{j<N,j\mid\Pi(\CP_{S}(N))}}\prod_{\substack{p\mid j}}\frac{\omega_{\operatorname{tor},p}^X(\Omega_p^c)}{\omega_{\operatorname{tor},p}^X(\Omega_p)}= \sum_{\substack{j<N}}f(j).$$
Now by Theorem \ref{thm:nonarchTmeas} and Proposition \ref{co:Lang-Weil}, we obtain
\begin{equation}\label{eq:wirsing1}
	\begin{split}
		\sum_{p<N}f(p)\log p&=\sum_{p\in\CP_S, p<N}\frac{\omega_{\operatorname{tor},p}^X(\Omega_p^c)}{\omega_{\operatorname{tor},p}^X(\Omega_p)}\log p\\ &=\sum_{p\in\CP_S,p<N}\frac{\#\mathbf{Z}(\BF_p)}{\#(\CX(\BF_p)\setminus \mathbf{Z}(\BF_p))}\log p\\ &=\sum_{p\in\CP_S,p<N}\left(\sum_{j\in J}\#\CM_{k_j/\BQ}(p)(1+O(p^{-\frac{1}{2}}))\right)\frac{\log p}{p}\\ &=\sum_{j\in J}[k_j:\BQ]\sum_{p\in\CP_S,p<N}\left(\frac{\log p}{p}+O\left(\frac{\log p}{p^{\frac{3}{2}}}\right)\right)\\ &=\theta\log N+O(1),
	\end{split}
\end{equation}
where we define \begin{equation}\label{eq:theta}
	\theta:=\frac{\sum_{j\in J}[k_j:\BQ]}{[L:\BQ]},
\end{equation} and for the last line we have invoked Mertens' theorem (for primes in Chebotarev sets, cf. \cite[Theorem A]{A-K-K} and \cite[Theorem 8.3]{B-F}).
On the other hand, we also have
\begin{equation}\label{eq:wirsing2}
	\begin{split}
		\sum_{j<N}f(j)\leqslant\prod_{p<N}\left(1+f(p)\right)&=\prod_{p\in\CP_S,p<N}\left(1-\frac{\#\mathbf{Z}(\BF_p)}{\#\CX(\BF_p)}\right)^{-1}\\&\ll \prod_{p\in\CP_S,p<N}\left(1-\frac{\sum_{j\in J}\#\CM_{k_j/\BQ}(p)}{p}\right)^{-1}\ll (\log N)^{\theta}. \end{split}
\end{equation}
Consequently, combining \eqref{eq:wirsing1} and \eqref{eq:wirsing2} and applying \cite[Theorem 1.1]{I-K}, we obtain that there exists $c>0$ such that
$$\sum_{\substack{j<N}}f(j)= c(\log N)^{\theta}+O\left((\log N)^{\theta-1}\right),$$ and hence $$G(N)\gg (\log N)^{\theta}.$$
On applying Corollary \ref{cor:Selberg} combined with Corollary \ref{thm:effectiveEE}, we obtain
\begin{equation}\label{eq:step1}
	\CN_X(\CE^{\CX}_{(\mathfrak{M}(\BQ)\setminus S)}[(\Omega_\nu)];B)\ll_\varepsilon \frac{1}{(\log N)^{\theta}}+\frac{N^{2(r+2\dim X+1)+\varepsilon}}{(\log B)^{\frac{1}{2}-\varepsilon}}.
\end{equation}

\emph{Estimation of $\CN_{(2)}(Z;B)$.} 
For each prime number $p$, we define
$$\CN_{(2)}^{\{p\}}(Z;B):=\#\{P\in\CT_{O}(\BQ):H_{\operatorname{tor}}(P)\leqslant B,|s(\widehat{P})|=p\}.$$ Then clearly 
\begin{equation}\label{eq:splitpN2}
	\CN_{(2)}(Z;B)\ll B(\log B)^{r-1}\CN_X(\CE^{\CX}_{(\mathfrak{M}(\BQ)\setminus S)}[(\Omega_\nu)];B)+\sum_{p<N}\CN_{(2)}^{\{p\}}(Z;B),
\end{equation} where we take $S\supset S_0$ such that \eqref{eq:ZFpXFp} holds.

Let $Z_0:=\pi^{-1}(Z)\subset X_0$, where $X_0$ is the principle universal torsor of $X$ with the canonical $\BZ$-model $\CX_0$. Then $Z_0$ is a reduced effective divisor of $X_0$. 
Note that $\Pic(\CX_0)$ is trivial, and the ring of regular functions on $\CX_0$ (i.e. the Cox ring) is isomorphic to the polynomial ring in $n$-variables with $\BZ$-coefficients. It follows that there exist an integral global section $\widetilde{s}$ of $\CO_{\CX_0}$ and $m\in\BN$, such that $Z_0=(\widetilde{s}=0)\times_{\BZ}\BQ$ and that for every $P\in X(\BQ)$, $$s(\widehat{P})=m\widetilde{s}(\XX(P)),$$ where we recall that $\XX(P)$ denotes a lift of $P$ to $\CX_0(\BZ)$. 

Now for every prime $p$, we define $$\phi_{p}^+:=m\widetilde{s}+p,\quad \phi_{p}^-:=m\widetilde{s}-p.$$ Note that they are of the same degree as $\widetilde{s}$.  Recall the set \eqref{eq:CAphi}.
Now by Proposition \ref{prop:subvar}, uniformly for any prime $p$, $$\max\left(\#\CA_{\phi_{p}^+}(B),\#\CA_{\phi_{p}^-}(B)\right)\ll (\deg \widetilde{s})B(\log B)^{r-2}\log\log B.$$
We hence obtain 
\begin{equation}\label{eq:step2}
	\begin{split}
		\sum_{p<N}\CN_{(2)}^{\{p\}}(Z;B)&\ll \sum_{p<N}\#\{\XX\in\CA(B):m|\widetilde{s}(\XX)|=p\}\\ &\ll\sum_{p<N}\left( \#\CA_{\phi_{p}^+}(B)+\#\CA_{\phi_{p}^-}(B)\right)\\ &\ll \frac{N}{\log N}B(\log B)^{r-2}\log\log B,
	\end{split}
\end{equation}
where the implied constant is allowed to depend on $s$. 

Now going back to \eqref{eq:splitpN1} \eqref{eq:splitpN2} and combining \eqref{eq:step1} \eqref{eq:step2},  we obtain that for $i=1,2$,
\begin{align*}
	\CN_{(i)}(Z;B) &\ll_\varepsilon  \frac{B(\log B)^{r-1}}{(\log N)^{\theta}}+N^{2(r+2\dim X+1)+\varepsilon}B(\log B)^{r-\frac{3}{2}+\varepsilon}+\frac{N}{\log N}B(\log B)^{r-2}\log\log B\\ &\ll \frac{B(\log B)^{r-1}}{(\log N)^{\theta}}+N^{2(r+2\dim X+1)+\varepsilon}B(\log B)^{r-\frac{3}{2}+\varepsilon}
\end{align*}
On taking $N=(\log B)^\lambda$ with $0<\lambda<(4(r+2\dim X+1))^{-1}$ and $\varepsilon$ small enough, we conclude that Theorem \ref{thm:primevalue} holds with \eqref{eq:theta}. \end{proof}

\subsection{Applications of the geometric sieve for toric varieties}\label{se:appgeomsieve}
The goal of this subsection is to prove Principle \ref{prin:purity} and Theorem \ref{thm:ratptsfibration} (2b) for any fixed smooth proper split toric variety $X$ over $\BQ$ with globally generated $K_X^{-1}$, by means of Theorem \ref{thm:keyOmega}.

\subsubsection{Proof of Principle \ref{prin:purity} -- Purity of equidistribution for $X$}
	By the purity of the Brauer group \cite[\S6]{Grothendieck}, we have $W(\RA_\BQ)^{\operatorname{Br}}=W(\RA_\BQ)$. Let $Z\subset X$ be Zariski closed of codimension at least two, and let $W$ be the open subset $X\setminus Z$. Let $\CZ=\overline{Z}\subset \CX$ and $\CW:=\CX\setminus \CZ$.
 In view of Theorem \ref{thm:portmanteau}, we want to deduce an asymptotic formula for the counting function $\CN_W(\CF;B)$ \eqref{eq:countingDelta} with $\Delta(B)=\delta_{(W(k)\setminus M)_{\leqslant B}}$ associated to any fixed adelic subset of the form $$\CF=\CF_\infty\times\prod_{p\mid l}\CF_p\times \prod_{p\nmid l} \CW(\BZ_p)\subset W(\RA_\BQ),$$ where $l$ is sufficiently large, $\CF_\infty\subset W(\BR)$ is $\omega_{\operatorname{tor},\infty}^X$-continuous with $\omega_{\operatorname{tor},\infty}^X(\CF_\infty)>0$ and each non-archimedean $\CF_p \subset W(\BQ_p)$ is non-empty and open-closed, because such sets form a topological basis of $W(\RA_\BQ)$.

Clearly $\omega_{\operatorname{tor},p}^X(\CF_p)>0$ for every $p\mid l$. By Lemma \ref{le:preparation}, Theorem \ref{thm:nonarchTmeas}, Lemma \ref{le:modelmeasurecomp} and the Lang--Weil estimate \cite{Lang-Weil}, we see that, uniformly for $p\nmid l$,  $$\omega_{\operatorname{tor},p}^X(\CW(\BZ_p))=\frac{\#\CW(\BF_p)}{p^{\dim X}}=1+O\left(p^{-\frac{1}{2}}\right).$$ So $\omega_{\operatorname{tor},p}^X(\CW(\BZ_p))>0$ provided $p$ is sufficiently large.  So upon enlarging $l$,  we may assume that the family $\{\CF_\infty,(\CF_p)_{p\mid l},(\CW(\BZ_{p}))_{p\nmid l}\}$ satisfies condition \eqref{eq:posmeas} of Theorem \ref{thm:keyOmega}.

We now verify condition \textbf{(GS)}.
Lemma \ref{le:preparation} shows that for each prime $p<\infty$, every $P\in X(\BQ_p)$ which lifts to $\widetilde{P}\in \CX(\BZ_p)$ with $\widetilde{P}\not\in \CW(\BZ_p)$ satisfies $\widetilde{P}\bmod p\in\CZ(\BF_p)$. So condition \textbf{(GS)} follows from Theorem \ref{thm:maingeomsieve}. 

The variety $X$ satisfies Principle \ref{prin:Manin-Peyre} thanks to Theorem \ref{thm:mainequidist}. It follows from Theorem \ref{thm:keyOmega} that 
\begin{equation}\label{eq:CNWB}
\begin{split}
	\CN_W(\CF;B)&=\CN_X\left(\CF_\infty\times\prod_{p\mid l}\CF_p\times \prod_{p\nmid l} \CW(\BZ_p);B\right)\\&\sim \omega^X_{\operatorname{tor}}\left(\CF_\infty\times\prod_{p\mid l}\CF_p\times \prod_{p\nmid l} \CW(\BZ_p)\right)=\omega_{\operatorname{tor}}^X|_W\left(\CF_\infty\times\prod_{p\mid l}\CF_p\times \prod_{p\nmid l} \CW(\BZ_p)\right).
\end{split}\end{equation}
Thus Principle \ref{prin:purity} for $X$ is proved.
\qed

\subsubsection{Proportion of everywhere locally soluble fibres}
We first recall the following result about the surjectivity of integral points ``outside of codimension two''.
\begin{proposition}[Loughran et al. \cite{BBL} Corollary 3.7, \cite{Loughran-Smeets} Proposition 4.1]\label{prop:surjectivity}
	Let $f_0:Y_1\to Y_2$ be a proper dominant morphism between smooth proper geometrically integral varieties over $k$.  Assume that the generic fibre is geometrically integral and the fibre over each point of $Y_2^{(1)}$ is pseudo-split. Then there exist a Zariski closed subset $Z\subset Y_2$ of codimension at least two satisfying the following. Let $S$ be a finite set of places containing $\infty_k$ such that $f_0$ extends to a proper model $\widetilde{f}_0:\CY_1\to\CY_2$ over $\CO_S$. Let $\CZ$ be the Zariski closure of $Z$ in $\CY_2$. Then there exists a finite set of places $S'$ containing $S$ such that  that for every $\nu\not\in S'$, we have 
	\begin{equation}\label{eq:surj}
		\left(\CY_2\setminus\CZ\right)(\CO_\nu)\subset f_0(Y_1(k_\nu)).
	\end{equation}
\end{proposition}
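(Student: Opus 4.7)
The plan is to construct the exceptional set $Z$ explicitly from the non-smooth locus of $f_0$ and then verify the inclusion \eqref{eq:surj} by a dichotomy based on whether the reduction of a given integral point hits a codimension-one branch of the discriminant locus, handling each case by a Lang--Weil plus Hensel argument. I would first enlarge $S$ to a finite set $S'$ so that $\widetilde{f}_0:\CY_1\to\CY_2$ is a proper morphism of smooth geometrically integral $\BZ_{S'}$-schemes, the non-smooth locus of $\widetilde{f}_0$ spreads to a closed subscheme $\CT\subset \CY_2$ whose irreducible components have the expected dimensions, and all Lang--Weil estimates invoked below apply uniformly for $p\not\in S'$.

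Step 1 is a geometric setup. Let $T\subset Y_2$ denote the closure of $\widetilde{f}_0(\mathrm{Sing}(f_0))$, and let $T^{(1)}$ be its pure codimension-one part with irreducible components $D_1,\dots,D_s$. For each $D_i$ the generic fibre of $f_0$ over $D_i$ is pseudo-split by hypothesis, so by definition there is a geometrically irreducible component $\Gamma_{D_i}\subset Y_1\times_{Y_2}\operatorname{Spec}\kappa(\eta_{D_i})$ of multiplicity one whose conjugates under $\operatorname{Gal}(\overline{\kappa(\eta_{D_i})}/\kappa(\eta_{D_i}))$ are fixed, equivalently $\Gamma_{D_i}$ is already defined over $\kappa(\eta_{D_i})$. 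Define $Z\subset Y_2$ to be the union of: the components of $T$ of codimension $\geq 2$ in $Y_2$; the pairwise intersections $D_i\cap D_j$ for $i\neq j$; the singular locus of each $D_i$; the intersection of each $D_i$ with the locus where $\Gamma_{D_i}$ fails to spread to a well-defined flat relative component of the appropriate multiplicity-one structure. Each piece has codimension at least two in $Y_2$.

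Step 2 is the local dichotomy at each prime $p\not\in S'$. Given $P_\nu\in(\CY_2\setminus\CZ)(\BZ_\nu)$, let $\overline{P}_\nu\in\CY_2(\BF_p)$ be its reduction. Either (a) $\overline{P}_\nu$ lies in the smooth locus of $\widetilde{f}_0$, in which case the fibre $\widetilde{f}_0^{-1}(\overline{P}_\nu)$ is a smooth geometrically integral $\BF_p$-scheme and Lang--Weil provides a smooth $\BF_p$-point, which then lifts by smoothness of $\widetilde{f}_0$ and Hensel's lemma to a $\BZ_\nu$-point of $\CY_1$ over $P_\nu$; or (b) $\overline{P}_\nu$ meets exactly one $D_i$ (mod $p$), transversally, at a smooth point of $D_i$, by the construction of $Z$. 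In case (b) the Frobenius-invariant multiplicity-one component $\Gamma_{D_i}$ spreads, in a neighbourhood of $\overline{P}_\nu$, to a geometrically irreducible multiplicity-one component of the fibre; applying Lang--Weil on its smooth locus produces a smooth $\BF_p$-point $\overline{Q}$ at which the total space $\CY_1$ is regular and the transversality reduces the lifting of $P_\nu$ to the solvability of a system to which the multivariable Hensel's lemma applies.

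The main obstacle is precisely the lifting in case (b): $\widetilde{f}_0$ is by construction not smooth at $\overline{Q}$, so one cannot directly apply smooth Hensel. The resolution is that the multiplicity-one hypothesis on $\Gamma_{D_i}$ means, after a local étale base change along $P_\nu$, that the component is cut out by equations in which the normal parameter to $D_i$ appears with multiplicity one; equivalently, $\CY_1$ is regular at $\overline{Q}$ and the local model for the map near $\overline{Q}$ is smooth over a regular one-dimensional base. This is precisely the content of the normal-crossings / multiplicity-one analysis of \cite[Prop.~4.1]{Loughran-Smeets} (and \cite[Cor.~3.7]{BBL}), which I would invoke essentially verbatim. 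Everything else reduces to checking that the finitely many bad loci enumerated in Step 1 can be absorbed into $\CZ$ and a finite enlargement of $S$ to $S'$.
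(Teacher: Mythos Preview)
Your argument contains a genuine gap in Step~1: you have conflated \emph{pseudo-split} with \emph{split}. The definition of pseudo-split requires only that \emph{each} element of $\operatorname{Gal}(\overline{\kappa(\eta_{D_i})}/\kappa(\eta_{D_i}))$ fix \emph{some} multiplicity-one geometric component, and that component is allowed to vary with the Galois element. It does \emph{not} guarantee a single component $\Gamma_{D_i}$ fixed by the whole Galois group, i.e.\ defined over $\kappa(\eta_{D_i})$; that would be precisely the stronger condition of being split. The paper itself points to \cite[Example~5.9]{Loughran-Smeets} for fibres that are pseudo-split but not split, so your spreading-out of a global $\Gamma_{D_i}$ and the ensuing Lang--Weil argument in case~(b) simply fail in general. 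Everything downstream in your construction of $Z$ (the locus where $\Gamma_{D_i}$ ``fails to spread'') is therefore not well-posed.

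The paper's route is genuinely different and is where the pseudo-split hypothesis actually does its work. One takes $\CT$ to be the closure of the locus of \emph{non-split} fibres and observes that, on each irreducible component of $\CT$ meeting $Y_2^{(1)}$, the set of closed points with non-split fibre is a \emph{Frobenian} set in the sense of \cite[\S3]{Loughran-Smeets}. The pseudo-split hypothesis over that generic point says exactly that every Frobenius fixes some multiplicity-one component, which forces this Frobenian set to have density zero; by \cite[Corollary~3.8]{Loughran-Smeets} it is then not Zariski dense, hence the non-split locus has codimension $\geqslant 2$ in $Y_2$. One takes $Z$ to be its closure. Outside $\CZ$ every fibre over every $\BF_p$-point is genuinely \emph{split}, and the surjectivity \eqref{eq:surj} follows from Lang--Weil plus Hensel as in \cite[Lemma~2.2]{Skorobogatov}. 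In short, the paper absorbs the bad codimension-one behaviour into $Z$ via a density argument rather than trying to lift through it directly; your case~(b) dichotomy is not needed, and the difficulty you identified as ``the main obstacle'' does not arise.
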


\begin{proof}[Proof of Theorem \ref{thm:ratptsfibration} (2b)]
Now Returning to $f:Y\to X$ as in the setting of Theorem \ref{thm:ratptsfibration} (2b). We choose a proper integral model $\widetilde{f}:\CY\to\CX$ over $\BZ_S$ extending $f$.  Applying Proposition \ref{prop:surjectivity}, there exists a Zariski closed subset $Z\subset X$ of codimension at least two and $S'$ a finite set of places containing $S$ such that $\CW(\BZ_p)\subset f(Y(\BQ_p))=\widetilde{f}(\CY(\BZ_p))$ for every prime $p\notin S'$, where $\CZ$ is the Zariski closure of $Z$ in $\CX$ and $\CW :=\CX\setminus\CZ$. Then for any such $p$ and $P\in X(\BQ_p)\setminus f(Y(\BQ_p))$,  by Lemma \ref{le:preparation}, the lift $\widetilde{P}\in\CX(\BZ)$ satisfies $\widetilde{P}\bmod p\in\CZ(\BF_p)$. By Theorem \ref{thm:maingeomsieve}, condition \textbf{(GS)} is satisfied for the family $(f(Y(\BQ_\nu))\subset X(\BQ_\nu))_{\nu\in\mathfrak{M}(\BQ)}$. It also satisfies \eqref{eq:posmeas} as we have seen in the proof of Corollary \ref{co:paucityfibra}.	So it follows directly from  Theorem \ref{thm:keyOmega} that $\omega_{\operatorname{tor}}^X\left(\prod_{\nu\in\mathfrak{M}(\BQ)}f(Y(\BQ_\nu))\right)>0$ and
\begin{equation}\label{eq:CNfB}
	\begin{split}
		\CN_{\operatorname{loc}}(f;B)&=\alpha(X)\CN_X\left(\prod_{\nu\in\mathfrak{M}(\BQ)}f(Y(\BQ_\nu));B\right)B(\log B)^{r-1}\\
		&\sim \alpha(X)\omega_{\operatorname{tor}}^X\left(\prod_{\nu\in\mathfrak{M}(\BQ)}f(Y(\BQ_\nu))\right)B(\log B)^{r-1}\\ &=\alpha(X)\omega_{\operatorname{tor}}^X\left(f(Y(\RA_\BQ))\right)B(\log B)^{r-1}.
	\end{split}
\end{equation}
	This finishes the proof of Theorem \ref{thm:ratptsfibration} (2b). 
\end{proof}

\begin{remark}\label{rmk:effectivity} 
	Adapting the strategy in \cite[\S3]{Cao-Huang2} based on Theorems \ref{thm:mainequidist} and  \ref{thm:maingeomsieve}, the error terms in the asymptotic formulas \eqref{eq:CNWB} and \eqref{eq:CNfB} can all be made explicit with some extra work. \end{remark}

\end{document}